\DeclarePairedDelimiter{\ceil}{\lceil}{\rceil}
\theoremstyle{plain}
\newtheorem{thm}{Theorem}[section]
\newtheorem{cor}[thm]{Corollary}
\newtheorem{lem}[thm]{Lemma}
\newtheorem{prop}[thm]{Proposition}
\newtheoremstyle{named}{}{}{\itshape}{}{\bfseries}{.}{.5em}{\thmnote{#3's }#1}
\theoremstyle{named}
\newcounter{constnum}
\def\const@nt#1{c_{#1}}
\def\c{\@ifnextchar[{\@with}{\@without}}
\def\@with[#1]{%
	\ifcsname constnum@#1\endcsname
	\else
		\stepcounter{constnum}%
		\expandafter\xdef\csname constnum@#1\endcsname{\theconstnum}%
	\fi
	\const@nt{\csname constnum@#1\endcsname}}
\def\@without{\stepcounter{constnum}c_{\theconstnum}}
\theoremstyle{definition}
\newtheorem*{claim}{Claim}
\newcommand{\pref}[1]{(\ref{#1})}
\newcommand{\NN}{\mathbb{N}}
\newcommand{\PP}{\mathbb{P}}
\newcommand{\ZZ}{\mathbb{Z}}
\newcommand{\RR}{\mathbb{R}}
\newcommand{\MM}{\mathcal{M}}
\newcommand{\EE}{\mathbb{E}}
\newcommand{\ve}{\varepsilon}
\newcommand{\vp}{\varphi}
\newcommand{\al}{\alpha}
\newcommand{\ga}{\gamma}
\newcommand{\ka}{\kappa}
\newcommand{\la}{\lambda}
\newcommand{\sig}{\sigma}
\newcommand{\om}{\omega}
\newcommand{\aint}{\fint}
\newcommand{\Ga}{\Gamma}
\newcommand{\De}{\Delta}
\newcommand{\La}{\Lambda}
\newcommand{\Om}{\Omega}
\newcommand{\norm}[1]{\|#1\|}
\newcommand{\rst}[1]{\ensuremath{{\mathbin\upharpoonright}%
\raise-.5ex\hbox{$#1$}}}
\newcounter{gscan}
\newcounter{btscan}
\newcounter{cscan}
\newcounter{hscan}
\newcounter{fscan}
\newcounter{pscan}
\newcounter{sscan}
\renewcommand{\thefscan}{F\arabic{fscan}}
\renewcommand{\tilde}{\widetilde}
\numberwithin{equation}{section}
\DeclareMathOperator*{\esssup}{ess\,sup}
\begin{document}
\title[Error Estimates for Parabolic Stochastic Homogenization]{Algebraic Error Estimates for the Stochastic Homogenization of Uniformly Parabolic Equations} 
\author{Jessica Lin}
\address{University of Wisconsin-Madison\\
Department of Mathematics\\
Madison, WI 53706}
\email[Jessica Lin]{jessica@math.wisc.edu}

\author{Charles K. Smart}
\address{Cornell University\\
Department of Mathematics\\
Ithaca, NY 14853}
\email[Charles K. Smart]{smart@math.cornell.edu}

\subjclass{35K55}
\keywords{quantitative stochastic homogenization, error estimates, parabolic regularity theory}
\date{\today}

\begin{abstract}
This article establishes an algebraic error estimate for the stochastic homogenization of fully nonlinear uniformly parabolic equations in stationary ergodic spatio-temporal media. The approach is similar to that of Armstrong and Smart in the study of quantitative stochastic homogenization of uniformly elliptic equations. 
\end{abstract}

\maketitle

\section{Introduction}
We study quantitative stochastic homogenization of equations of the form
\begin{equation}\label{homeq}
\begin{cases}
u^{\ve}_{t}+F(D^{2}u^{\ve}, x/\ve, t/\ve^{2}, \om)=0 & \text{in}\quad U_{T},\\
u^{\ve}=g & \text{on}\quad\partial_{p} U_{T}, 
\end{cases}
\end{equation}
where $F$ is a random uniformly elliptic operator, determined by an element $\omega$ of some probability space, {$U_{T}:=U\times (0,T]\subsetneq \RR^{d+1}$ is a compact domain}, and $\partial_p U_T$ is the parabolic boundary.  In \cite{linhomog}, it was shown by one of the authors that under suitable hypotheses on the environment (namely stationarity and ergodicity of the operator in space and time), $u^{\ve}(\cdot, \cdot, \om)$ converges almost surely to a limiting function $u$ which solves
\begin{equation}\label{limeq}
\begin{cases}
u_{t}+\overline{F}(D^{2}u)=0 & \text{in}\quad U_{T},\\
u=g & \text{on}\quad\partial_{p} U_{T},
\end{cases}
\end{equation}
for a uniformly elliptic limiting operator $\overline{F}$ which is independent of $\omega$. Furthermore, a rate of convergence was established under additional quantitative ergodic assumptions. If the environment is strongly mixing with a prescribed logarithmic rate, then the convergence occurs in probability with a logarithmic rate, i.e.
{
\begin{equation}\label{genrate}
\PP\left[\sup_{U_{T}}|u^{\ve}(\cdot, \cdot, \om)-u(\cdot, \cdot)| \geq f(\ve) \right]\leq f(\ve), 
\end{equation}}
with ${f(\ve) \sim |\log \ve|^{-1}}$.  {In this article, we show that under the assumption of finite range of dependence, the homogenization occurs in probability with an algebraic rate, i.e. $f(\ve)\sim \ve^{\beta}$.}

\subsection{Background and Discussion}

For nondivergence form equations in the random setting, the pioneering works establishing the qualitative theory of homogenization (the convergence of $u^{\ve}\rightarrow u$) include (but are not limited to) the papers of Papanicolaou and Varadhan \cite{papvar1} and Yurinski{\u\i} \cite{yurestqual} for linear, nondivergence form uniformly elliptic equations, and Caffarelli, Souganidis, and Wang \cite{csw} for fully nonlinear uniformly elliptic equations. The study of quantitative stochastic homogenization seeks to establish error estimates for this convergence. For linear uniformly elliptic equations in nondivergence form, the first results were obtained by Yurinski{\u\i}  \cite{yurest2, yurest3}. Assuming that the environment satisfies an algebraic rate of decorrelation, his works present an algebraic rate of convergence for stochastic homogenization in dimensions $d\geq 5$. In dimensions $d=3,4$, the same result holds under the additional assumption of small ellipticity contrast, i.e. the ratio of ellipticities is close to 1. In dimension $d=2$, Yurinski{\u\i}'s results yield a logarithmic rate of convergence. 

 {For fully nonlinear equations, the first quantitative stochastic homogenization result appears in Caffarelli and Souganidis \cite{cs} for elliptic equations, and the parabolic case with spatio-temporal media was considered by one of the authors in \cite{linhomog}.  Both of these works obtain logarithmic convergence rates from logarithmic mixing conditions.  The approach of both papers is to adapt the obstacle problem method of Caffarelli, Souganidis, and Wang \cite{csw} to construct approximate correctors, which play the role of correctors in the random setting. }The logarithmic rate appears to be the optimal rate attainable with this approach.  This left open the question whether an algebraic rate similar to the results of Yurinski{\u\i} was attainable in the more general setting of fully nonlinear equations, and for problems in lower dimensions.

In the elliptic setting, this was addressed in \cite{asellip} by Armstrong and one of the authors.  They prove algebraic error estimates in all dimensions for the stochastic homogenization of fully nonlinear uniformly elliptic equations.  The main insight of their work was the introduction of a new subadditive quantity that (1) controls the solutions of the equation and (2) can be studied by adapting the regularity theory of Monge-Amp\`ere equations. Their method does not see the presence of correctors, and instead controls solutions indirectly via geometric quantities.

{The purpose of this article is to adapt the elliptic strategy to the parabolic spatio-temporal setting, which turns out to be subtle.  The approach of \cite{asellip} was to view the convex envelope of a supersolution as an approximate solution of the Monge-Amp\`ere equation
\begin{equation}\label{ellipma}
\det D^{2}w=1,
\end{equation}
for $w$ convex, and to then use ideas from the regularity theory of \pref{ellipma} (namely John's Lemma) to control the sublevel sets of $w$.  In the parabolic setting, we will show that the \textit{monotone envelope} of a supersolution of \pref{homeq} is an approximate solution of the analogous Monge-Amp\`ere equation 
\begin{equation}\label{genma}
-w_{t}\det D^{2}w=1,
\end{equation}
for $w$ parabolically convex (convex in space and non-increasing in time).  The equation \pref{genma} was first introduced by Krylov \cite{krylovconv}, and then it was further pointed out by Tso \cite{tso} that this was the most appropriate parabolic analogue of \pref{ellipma}. Regularity properties of  \pref{genma} have been studied by Guti{\'e}rrez and Huang in \cite{guthuang1, guthuang2}, and other parabolic Monge-Amp\`ere equations have been studied by Daskalopoulus and Savin in \cite{totiovidiuma}.  In spite of this work, the equation \eqref{genma} is still not as well-understood as \eqref{ellipma}.  In particular, there is no analogue of John's Lemma for sublevel sets of parabolically convex functions.  This forced us to develop an alternative approach (which can also be used in the elliptic setting) which replaces John's Lemma with a compactness argument.}

\subsection{Assumptions, and Statement of the Main Result}
We begin by stating the general assumptions on \pref{homeq}, and the precise statement of the main result. {We work in the stationary ergodic, spatio-temporal setting. We assume there exists an underlying probability space $(\Om, \mathcal{F}, \PP)$ such that

\begin{equation*}
\Om:=\left\{ F : \mathbb{S}^{d}\times\RR^{d+1}\rightarrow \RR\quad\text{satisfies}\quad \text{(F1)-(F4)}\right\}
\end{equation*}

\noindent where \pref{f1'}-\pref{f3'} will be specified below.  In particular, we have $F(X,y,s,\omega) = \omega(X,y,s)$. $\mathcal{F}$ is the Borel $\sigma$-algebra on $\Om$, and we assume that $\Om$ is equipped with a set of measurable measure-preserving transformations $\tau_ {(y', s')}: \Om\rightarrow \Om$ for each $(y',s')\in \RR^{d+1}$.} We also assume that $\partial_{p}U_{T}$ satisfies a uniform exterior cone condition, which allows us to construct global barriers (see \cite{parbar} for the precise assumption). Our hypotheses can be summarized as follows:

\begin{list}{ (\thefscan)}
{
\usecounter{fscan}
\setlength{\topsep}{1.5ex plus 0.2ex minus 0.2ex}
\setlength{\labelwidth}{1.2cm}
\setlength{\leftmargin}{1.5cm}
\setlength{\labelsep}{0.3cm}
\setlength{\rightmargin}{0.5cm}
\setlength{\parsep}{0.5ex plus 0.2ex minus 0.1ex}
\setlength{\itemsep}{0ex plus 0.2ex}
}

\item \label{f1'} \textit{Finite Range of Dependence}: 
For $A\subseteq\RR^{d+1}$, denote 
\begin{equation*}
\mathcal{B}(A):=\sigma\left\{F(\cdot, y, s, \om): (y,s)\in A\right\},
\end{equation*}
the $\sigma$-algebra generated by the operators $F$ defined on $A$. For $(x_{1},t_{1}),$
$(x_{2},t_{2})\in \RR^{d+1}$, let 
\begin{equation*}
d[(x_{1}, t_{1}), (x_{2}, t_{2})]:=(|x_{1}-x_{2}|^{2}+|t_{1}-t_{2}|)^{1/2}.
\end{equation*}
For $A,B\subseteq \RR^{d+1}$, let
\begin{equation}\label{metricdef}
d[A,B]:=\min \left\{d[(x,t),(y,s)]: (x,t)\in A, (y,s)\in B\right\}.
\end{equation}
{The finite range of dependence assumption is:
\begin{align}
\text{For all random variables}\,X&:\mathcal{B}(A)\rightarrow\RR, Y:\mathcal{B}(B)\rightarrow\RR\label{iid}\\
&\text{with $d[A,B]\geq 1$, $X, Y$ are $\PP$-independent.}\notag
\end{align}
}

\item\label{foops} \textit{Stationarity}:
For every $(M, \om)\in \mathbb{S}^{d}\times\Om$, where $\mathbb{S}^{d}$ denotes the space of $d\times d$ symmetric matrices with real entries, and for all $(y',s')\in\RR^{d+1}$, 
\begin{equation*}
F(M, y+y', s+s', \om)=F(M, y, s, \tau_{(y',s')}\om).
\end{equation*}
{ In fact, we only use this hypothesis for $(y',s') \in \ZZ^{d+1}$.}

\item \label{f2'} \textit{Uniform Ellipticity}. For a fixed choice of $\la, \La\in \RR$ with $0< \la\leq \La$, we define Pucci's extremal operators, 
\begin{align*}
&\MM^{+}(M)=\sup_{\la I \leq A \leq \La I}\left\{-tr(AM)\right\}=-\la \sum_{e_{i}>0} e_{i}-\La \sum_{e_{i}<0} e_{i},\\
&\MM^{-}(M)=\inf_{\la I \leq A \leq \La I}\left\{-tr(AM)\right\}=-\la \sum_{e_{i}<0} e_{i} -\La \sum_{e_{i}>0} e_{i}.
\end{align*}
We assume that $F(\cdot, y, s, \om)$ is uniformly elliptic for each $\om\in \Om$, i.e. for all $M,N\in \mathbb{S}^{d}$, and $(y,s, \om)\in \RR^{d+1}\times \Om$,  
\begin{equation*}
\MM^{-}(M-N)\leq F(M, y,s, \om)-F(N, y,s, \om)\leq \MM^{+}(M-N).
\end{equation*}
\item \label{f3'} \textit{Boundedness and Regularity of $F$}:  For every $R>0, \om\in \Om, M\in \mathbb{S}^{d}$ with $|M|\leq R$, 
\begin{equation*}
\left\{F(M, \cdot, \cdot, \om)\right\}\text{is uniformly bounded and uniformly equicontinuous on}~\RR^{d+1},
\end{equation*}
and there exists $K_{0}$ so that 
\begin{equation*}
\esssup_{\om\in \Om}\sup_{(y,s)\in \RR^{d+1}} |F(0, y, s, \om)|<K_{0}.
\end{equation*}

\noindent We also require that there exists a modulus of continuity $\rho[\cdot]$, and a constant $\sig>\frac{1}{2}$ such that for all $(M, y, s, \om)\in \mathbb{S}^{d}\times \RR^{d+1}\times \Om$, 
\begin{equation*}
|F(M,y_{1},s_{1},\om)-F(M, y_{2}, s_{2}, \om)|\leq \rho[(1+|M|)(|y_{1}-y_{2}|+|s_{1}-s_{2}|)^{\sig}]
\end{equation*}
where $|{\cdot}|$ denotes the standard Euclidean norm on $\RR^{d}$ and $\RR$ respectively. By applying \pref{f3'}, we have that
\begin{equation}\label{Mbnd}
\esssup_{\om\in\Om} \sup_{(y,s)\in \RR^{d+1}}|F(M, y, s, \om)|\leq C+\La |M|\leq C(1+|M|).
\end{equation}
\end{list}

Equipped with these assumptions, we now state the main result:

\begin{thm}\label{mainthm}
{
Assume \pref{f1'}-\pref{f3'}, and fix a domain $U_{T}$ and constant $M_{0}$.  There exists $C=C(\la, \La, d, M_{0})$ and a random variable $\mathcal{X}: \Om\rightarrow \RR$ with $\EE[\exp(\mathcal{X}(\om))]\leq C$, such that, whenever $u^{\ve}$ solves \pref{homeq}, $u$ solves \pref{limeq}, and
\begin{equation*}
1+K_{0}+\norm{g}_{C^{0,1}(\partial_{p}U_{T})}\leq M_{0},
\end{equation*}
then for any $p<d+2$, there exists a $\beta=\beta(\la, \La, d, p)>0$ such that 
\begin{equation}\label{serious}
\sup_{U_{T}} \left|u(x,t)-u^{\ve}(x, t, \om)\right|\leq C \left[1+\ve^{p}\mathcal{X}(\om)\right]\ve^{\beta}.
\end{equation}}
\end{thm}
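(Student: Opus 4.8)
The plan is to follow the general scheme of Armstrong--Smart \cite{asellip}, but carried out in the parabolic spatio-temporal setting using the parabolic Monge-Amp\`ere equation \pref{genma} in place of \pref{ellipma}. The starting point is to introduce a subadditive quantity associated to solutions of the homogenized-type problem: for a parabolic cube $Q$, one considers the maximal $F(\cdot, \cdot/\ve, \cdot/\ve^2, \om)$-supersolution $w$ above a fixed paraboloid-type obstacle on $Q$, forms its \emph{monotone envelope} $\Gamma[w]$ (convex in $x$, non-increasing in $t$), and takes as subadditive quantity something like the parabolic measure $\int -\Gamma_t \det D^2\Gamma$ of the contact set. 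The first step is to verify that this quantity is indeed subadditive (or superadditive) under subdivision of parabolic cubes, so that the subadditive ergodic theorem applies: using \pref{f1'} (finite range of dependence) one upgrades the convergence of the additive-ergodic-theorem limit to a quantitative concentration estimate with an algebraic rate, producing a random variable $\mathcal{X}$ with the stated exponential moment bound. Here the finite range of dependence, together with the fact that on a cube of side $R$ one is averaging $\sim R^{d+2}$ approximately independent contributions, is what converts the qualitative convergence into the rate $\ve^\beta$; the exponent $p<d+2$ in \eqref{serious} reflects precisely this volume count.

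The second step is the deterministic PDE input: one must show that the monotone envelope of an $F$-supersolution of \pref{homeq} is an approximate supersolution of the parabolic Monge-Amp\`ere equation \pref{genma}, and conversely that control of the parabolic Monge-Amp\`ere measure of the contact set gives back control of the supersolution itself (an $L^\infty$--in terms of--measure estimate, i.e. a parabolic ABP-type inequality, combined with a measure-theoretic covering argument). This is where the paper's novel contribution enters: since there is no analogue of John's Lemma for sublevel sets of parabolically convex functions, one replaces it with a compactness argument. Concretely, I would prove by compactness that if the parabolic Monge-Amp\`ere measure of the monotone envelope restricted to a cube is small (close to that of the homogenized problem), then the envelope --- and hence $w$ itself --- is close to a solution of the corresponding $\oline F$-problem on a slightly smaller cube, with a quantitative modulus obtained from a contradiction/compactness argument together with the interior regularity theory for \pref{limeq}.

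The third step is to combine the rate from the subadditive/concentration estimate with the compactness-based comparison estimate to get one-step improvement on scales: at each dyadic scale the error picks up a controlled algebraic factor, and iterating (summing a geometric series) over scales from $\ve$ up to $O(1)$ yields the global bound \eqref{serious}. One passes from the oscillation control on supersolutions to both $u^\ve - u$ and $u - u^\ve$ by applying the argument to $\pm u^\ve$ and using comparison with the barriers afforded by the uniform exterior cone condition on $\partial_p U_T$ (this handles the boundary layer, which contributes only a lower-order term in $\ve^\beta$ because of the $C^{0,1}$ bound on $g$ encoded in $M_0$). The constant $C$ depends only on $\la,\La,d,M_0$ as claimed because all the intermediate estimates --- parabolic ABP, interior regularity for \pref{limeq}, the Krylov--Safonov / parabolic Harnack machinery, and the barrier construction --- depend only on these quantities.

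The main obstacle I expect is the second step: establishing the quantitative compactness/comparison statement for the parabolic Monge-Amp\`ere equation \pref{genma} without John's Lemma. One has to control the shape of sublevel sets $\{\Gamma < \ell\}$ of a parabolically convex function --- which, unlike the elliptic case, need not be comparable to ellipsoids and have a genuinely anisotropic parabolic scaling between the $x$ and $t$ directions --- and make the compactness argument quantitative enough that the resulting modulus of continuity can be fed into the iteration over scales. A secondary technical difficulty is checking the (super/sub)additivity and the correct parabolic scaling of the chosen subadditive quantity, since the natural parabolic dilation $(x,t)\mapsto(rx, r^2 t)$ interacts with the determinant and the $-w_t$ factor in \pref{genma} in a way that must be tracked carefully to land the exponent $p<d+2$.
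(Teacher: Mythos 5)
Your high-level outline correctly identifies the main ingredients (the monotone envelope, the parabolic Monge--Amp\`ere connection, the compactness replacement for John's Lemma, finite range of dependence), but two of your three steps describe arguments that differ materially from what is needed, and as written they have genuine gaps.

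\textbf{Step 1 gap (the heart of the quantitative argument).} You say that finite range of dependence ``upgrades the convergence of the additive-ergodic-theorem limit to a quantitative concentration estimate,'' justified by the heuristic that a cube of side $R$ averages $\sim R^{d+2}$ approximately independent contributions. This volume-count intuition would control the \emph{fluctuations} of $\mu(G_n,\omega)$ around its mean $E_n$, but it says nothing about how fast $E_n$ itself approaches the limit, which is what actually determines the homogenization rate. Because $\mu$ is not an additive (only a subadditive) quantity, there is no CLT-style argument; the paper instead needs a bootstrap: if the variance of $\mu$ and $\mu^*$ fails to decay, then $E$ and $E^*$ must be \emph{simultaneously small} (Lemma~\ref{decaylem}), and this, fed back into the finite-range variance decomposition (Lemma~\ref{lem:iid}), forces algebraic decay of $J_m$ by induction (Theorem~\ref{thmmom}). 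Crucially, Lemma~\ref{decaylem} is \emph{not} independent of your Step 2: it uses the strict-convexity/growth result (Theorem~\ref{growth}, proved by compactness in Section~\ref{sec:pma}) plus the Harnack inequality to show that two sandwiched supersolutions with matching $\mu$ and $\mu^*$ cannot both be large. Treating the probabilistic step and the deterministic Monge--Amp\`ere step as separable, as you do, hides the essential coupling between them.

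\textbf{Step 3 gap (from $\mu$-decay to $\sup|u-u^\ve|$).} You propose a ``one-step improvement over dyadic scales'' iteration with a geometric series. The paper does not do this, and an iteration over scales would need a regularity estimate for $u$ (or for the corrector) strong enough to freeze coefficients at each scale, which is not available since $u$ is only $C^{1,\alpha}$-ish, not $C^2$. Instead the paper proves a quantified comparison principle (Proposition~\ref{qviscosity}) by doubling variables---a Lipschitz estimate relating the measure of the set of parabolic subdifferentials of the doubled function to the measure of its touching points---and combines it with the parabolic $W^{3,\alpha}$ estimate (Theorem~\ref{wpar}) to find a point where $u$ admits a second-order Taylor expansion with controllable cubic error. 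This produces (Proposition~\ref{grid}) a specific grid cube $G^*$ and matrix $M^*$, from a \emph{finite} list, such that $\mu(G^*,\overline F(M^*),M^*)\gtrsim A^{d+1}$ whenever $\sup(u-u^\ve)\geq A$; a union bound over the grid together with Corollary~\ref{cormup} then finishes the proof. Your sketch is missing all of this: the doubling-variables comparison estimate, the $W^{3,\alpha}$ input, and the crucial reduction to a countable family of $(M^*,G^*)$ on which one can afford a union bound.

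A smaller inaccuracy in Step 2: the compactness argument of Section~\ref{sec:pma} does not show that the envelope is close to a solution of the $\overline F$-problem; it shows that a supersolution whose $\mu$-density is pinched near a constant has a strictly parabolically convex envelope, hence grows quadratically away from its contact point (Lemma~\ref{lem:lbnds}, Theorem~\ref{growth}). The uniform $C^{1,1}$/Lipschitz upper bound (Lemma~\ref{lem:upbnds}), which must be \emph{independent of $K_0$} to be scale-invariant, is the thing that makes the compactness argument bite; your description omits this.
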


{
The above theorem implies
\begin{equation}\label{finalrate}
\PP\left[\sup_{U_{T}} \left| u(x,t)-u^{\ve}(x, t, \om)\right|>C\ve^{\beta}\right]\leq C\exp(-\ve^{-p}),
\end{equation}
for $\beta > 0$ independent of the boundary data. It has recently been shown in the elliptic setting \cite{asdiv, scottjc, ottoco1, ottoco2} that quantitative estimates similar to \pref{serious} lead to a higher regularity theory at large scales. Although we do not discuss higher regularity results in this article, we are motivated by the recent progress in the elliptic setting to state our results in this form. }
\subsection{Notation and Conventions}
We mention some general notation and conventions used throughout the paper. The letters $\la, \La, K_{0}, T, U_{T}$ will always be used exclusively to refer to the constants stated in the assumptions. In the proofs, the letters $c,C$ will constantly be used as a generic constant which depends on these universal quantities, which may vary line by line, but is precisely specified when needed. We will always denote $\mathbb{S}^{d}$ as the set of symmetric $d\times d$ matrices with real entries, and $\mathbb{M}^{d}$ as the set of $d\times d$ matrices with real entries. We use the notation $|\cdot|$ to denote a norm on a finite-dimensional Euclidean space $(\RR, \RR^{d}, \RR^{d+1}$ or $\mathbb{S}^{d}$), or the Lebesgue measure on $\RR^{d+1}$, and we reserve $\norm{\cdot}$ to denote a norm on an infinite-dimensional function space. 

We choose to employ the parabolic metric 
\begin{equation*}
d[(x_{1}, t_{1}), (x_{2}, t_{2})]=(|x_{1}-x_{2}|^{2}+|t_{1}-t_{2}|)^{1/2}.
\end{equation*}
We point out that this equivalent to the metric 
\begin{equation*}
d_{\infty}[(x_{1}, t_{1}), (x_{2}, t_{2})]=\max\left\{|x_{1}-x_{2}|, |t_{1}-t_{2}|^{1/2}\right\}.
\end{equation*}
We say that $f\in C^{0, \al}$ if for any $(x,t), (y,s)\in \RR^{d+1}$, 
\begin{equation*}
|f(x,t)-f(y,s)|\leq \norm{f}_{C^{0, \al}}d[(x,t), (y,s)]^{\al}.
\end{equation*}

For sets, we use the notation $Q\subseteq \RR^{d+1}$ to represent an arbitrary space-time domain, i.e. $Q=Q'\times (t_{1}, t_{2}]$ where $Q'\subseteq \RR^{d}$. We define the parabolic boundary by
\begin{equation*} 
\partial_{p}Q:=(Q'\times \left\{t=t_{1}\right\})\cup (\partial Q'\times[t_{1}, t_{2})).
\end{equation*}
We use the convention that $\overline{Q}=Q\cup \partial_{p}Q$, and 
 \begin{equation*}
 Q(t):=\left\{x\in \RR^{d}: (x,t)\in Q\right\}.
 \end{equation*} 

We use the conventions
\begin{align*}
&B_{r}(\overline{x}, \overline{t})=B_{r}(\overline{x})\times \left\{t=\overline{t}\right\},\\
&\mathcal{B}_{r}(\overline{x}, \overline{t})=\left\{(x,t)\in \RR^{d+1}: d[(\overline{x}, \overline{t}), (x,t))]< r \right\},\\
&Q_{r}(\overline{x}, \overline{t})=B_{r}(\overline{x})\times (\overline{t}-r^{2}, \overline{t}].
\end{align*}
In general, $B_{r}, \mathcal{B}(r), Q_{r}$ are used to denote $B_{r}(0,0)$, $\mathcal{B}_{r}(0,0)$, and $Q_{r}(0,0)$ respectively. We point out that $\mathcal{B}_{r}$ and $Q_{r}$ are nothing more than the open balls generated by $d[\cdot, \cdot]$ and $d_{\infty}[\cdot, \cdot]$ respectively.

In addition to these sets, we work with a grid of parabolic cubes which partitions $\RR^{d+1}$. The grid boxes take the form 
\begin{align*}
G_{n}=\left[-\frac{1}{2}3^{n}, \frac{1}{2}3^{n}\right)^{d}\times (0, 3^{2n}].
\end{align*}
For every $(x,t)\in \RR^{d+1}$, we identify the cube
\begin{equation*}
G_{n}(x,t)=\left(3^{n}\lfloor 3^{-n}x+\frac{1}{2}\rfloor, 3^{2n}\lfloor 3^{-2n}t \rfloor\right)+ G_{n}.
\end{equation*}

\subsection{Outline of the Method and the Paper}

In Section \ref{sec:mu}, we define the appropriate parabolic analogue of the quantity introduced in \cite{asellip}. {We prove the basic properties of this quantity and describe how it controls solutions from one side.  In Section \ref{sec:pma}, we show how the quantity controls the behavior of solutions from the other side, utilizing the connection with the parabolic Monge-Amp\`ere equation.}  Here our primary innovation beyond \cite{asellip} appears.

In Section \ref{sec:qualstuff}, we construct the effective operator $\overline{F}$ using the asymptotic properties of our quantity, {and we also construct approximate correctors of \pref{homeq}.}
In Section \ref{sec:decaym}, we obtain a rate of decay on the second moments of this quantity, following closely the analysis of \cite{asellip}.   Finally, in Section \ref{sec:qptm}, we show how the rate on the second moments yields a rate of decay on $|u^{\ve}-u|$ in probability.

\section{A Subadditive Quantity Suitable for Parabolic Equations}\label{sec:mu}
\subsection{Defining $\mu(Q, \om, \ell, M)$}
We now define the quantity which will be used extensively throughout the rest of the paper. This quantity is a functional which measures the amount a function $u$ bends in space and time. We first recall some geometric objects relevant to the study of parabolic equations, and we refer the reader to \cite{krylovconv, wangreg1, cyriluis, guthuang2} for general references. We consider a subset $Q\subseteq\RR^{d+1}$, a fixed environment $\om\in \Om$, $\ell\in \RR$, and $M\in \mathbb{S}^{d}$. We then consider the set 
\begin{equation*}
S(Q, \om, \ell, M)=\left\{u\in C(Q): u_{t}+F(M+D^{2}u, x, t, \om)\geq \ell\quad\text{in}\quad Q\right\},
\end{equation*}
where the inequality is satisfied in the viscosity sense \cite{users}, and similarly, 
\begin{equation*}
S^{*}(Q, \om, \ell, M)=\left\{u\in C(Q): u_{t}+F(M+D^{2}u, x, t, \om)\leq \ell\quad\text{in}\quad Q\right\}.
\end{equation*}
To simplify the notation, we omit parameters when they are assumed to be 0, e.g. $S(Q, \om)$ refers to the choice $\ell=0$ and $M=0$. We say a function $u$ is parabolically convex if $u(\cdot, t)$ is convex for all $t$, and $u$ is non-increasing in $t$. For any function $u$, we define the monotone envelope to be the supremum of all parabolically convex functions lying below $u$. In particular, $\Ga^{u}$ has the following standard representation formula which can be taken as the definition:
\begin{equation*}
\Ga^{u}(x,t):=\sup \left\{ p\cdot x+h \mid p\cdot y +h\leq u(y,s), \forall (y,s)\in Q, s\leq t\right\}.
\end{equation*}
We point out that $\Ga^{u}$ depends on the domain $Q$, however we typically suppress this dependence.

At any point $(x_{0},t_{0})$, we compute the parabolic subdifferential,
\begin{align*}
\mathcal{P}((x_{0}, t_{0}); u):=\left\{(p,h)\subseteq\RR^{d+1}: \min_{x\in U, t\leq t_{0}} u(x,t)-p\cdot x=u(x_{0}, t_{0})-p\cdot x_{0}=h\right\},
\end{align*}
which may be empty.

We then say that for a domain $Q'\subseteq Q \subseteq \RR^{d+1}$, 

\begin{align*}
&\mathcal{P}(Q'; u):=\bigcup_{(x_{0},t_{0})\in Q'} \mathcal{P}((x_{0}, t_{0}); u)\\
&= \left\{(p,h): \exists\,(x_{0},t_{0})\in Q',\,s.t. \min_{(x,s)\in Q,\,s\leq t_{0}} u(x,s)-p\cdot x=u(x_{0}, t_{0})-p\cdot x_{0}=h\right\}.
\end{align*}

We now define the quantity
\begin{equation}\label{mudef}
\mu(Q, \om, \ell, M):=\frac{1}{|Q|}\sup\left\{ | \mathcal{P}(Q; \Ga^{u}) |: u\in S(Q, \om, \ell, M)\right\},
\end{equation}
where $|\cdot|$ denotes Lebesgue measure on $\RR^{d+1}$.

At this time, we also point out some properties of $\mu(Q, \om)$, which are critical for the analysis which follows. 
\begin{enumerate}
\item If $u$ is constant time, then $Q(t)$ is constant in time. The projection of $\mathcal{P}((x_{0},t); u)$ into $\RR^{d}$ is precisely the elliptic subdifferential of the convex envelope of $u$. We denote the elliptic subdifferential by $\partial \Ga^{u}[t](\cdot; \cdot)$. This shows that after an appropriate projection and renormalization, $\mu$ as defined in \pref{mudef} reduces to the quantity defined in \cite{asellip}. 
\item This quantity respects the scaling on domains with parabolic scaling. For each $u\in S(G_{n}, \om)$, let $u_{n}(x,t):=3^{-2n}u(3^{n}x, 3^{2n}t)\in S(G_{0}, \om)$. Under this scaling, if $(p,h)\in \mathcal{P}(G_{n}; u)$, then $(3^{-n}p, 3^{-2n}h)\in \mathcal{P}(G_{0}; u_{n})$. Thus, we have that

\begin{align*}
|\mathcal{P}(G_{n}; u)|=3^{n(d+2)}|\mathcal{P}(G_{0}; u_{n})|.
\end{align*}
This shows us that in order to prove statements for $\mu(G_{n}, \om)$, it is enough to prove statements for $\mu(G_{0}, \om)$, and rescale.

\item If $w\in C^{2}(Q)$ and parabolically convex, then $\mathcal{P}((x_{0}, t_{0}); w)$ reduces to 
\begin{equation*}
\mathcal{P}((x,t); w)=(Dw(x,t), w(x,t)-Dw(x,t)\cdot x).
\end{equation*}
If we interpret $\mathcal{P}((\cdot, \cdot); w)=\mathcal{P}[w](\cdot, \cdot): \RR^{d+1}\rightarrow \RR^{d+1}$, then by a standard computation,
\begin{equation*}
\det \mathcal{D}\mathcal{P}[w]=-w_{t} \det D^{2}w, 
\end{equation*}
where $\mathcal{D}\mathcal{P}[w]=D_{t,x}\mathcal{P}[w]$. We point out that the right hand side is precisely the Monge-Amp\`ere operator first introduced in \cite{krylovconv, tso}. Therefore, by applying the area formula \cite{evansgarbook}, 

\begin{equation*}
\frac{1}{|Q|}| \mathcal{P}(Q; w) |=\frac{1}{|Q|} \int_{Q} \det \mathcal{D}\mathcal{P}[w] ~dx dt=\frac{1}{|Q|} \int_{Q} -w_{t} \det D^{2}w~dxdt.
\end{equation*}

This shows the formal connection between the quantity $\frac{|\mathcal{P}(Q; \Ga^{u})|}{|Q|}$ and the parabolic Monge-Amp\`ere equation. We will explore this connection further in Section \ref{sec:pma}. 

\end{enumerate}

As introduced in \cite{asellip}, we now define $\mu^{*}(G_{n}, \om)$, which will serve as the analogous quantity corresponding to subsolutions. We define the involution operator $\pi(\om)=\om^{*}$ by 
\begin{equation*}
F(M, x, t, \om^{*}):=-F(-M, x, t, \om)\quad\text{for}\quad (M, x, t,\om)\in \mathbb{S}^{d}\times \RR^{d+1}\times \Om.
\end{equation*}
(Recall we assumed $\Omega$ is the space of operators $F$.)  We point out that $\pi:\Om\rightarrow \Om$ is a bijection, and $\om^{**}=\om$. Moreover, for $u\in C(\overline{Q})$, 
\begin{equation*}
u_{t}+F(-M+D^{2}u, x, t, \om^{*})\geq -\ell\quad \Longleftrightarrow v:=-u~\text{solves}~v_{t}+F(M+D^{2}v, x, t, \om)\leq \ell,
\end{equation*}
\noindent in the viscosity sense. Therefore, we define

\begin{align}\label{mu*def}
\mu^{*}(Q, \om, \ell, M)&:=\frac{1}{|Q|}\sup\left\{ | \mathcal{P}(Q; \Ga^{u}) |: u\in S(Q, \om^{*}, -\ell, -M)\right\}\\
&=\mu(Q, \om^{*}, -\ell, -M)\notag\\
&=\frac{1}{|Q|}\sup\left\{ | \mathcal{P}(Q; \Ga^{-u}) |: u\in S^{*}(Q, \om, \ell, M)\right\}\notag.
\end{align}

Since $\pi(\om)=\om^{*}$ is an $\mathcal{F}$-measurable function on $\Om$, we define the pushforward 
\begin{equation*}
\pi_{\#}\PP(E):=\PP[\pi^{-1}(E)].
\end{equation*}
This justifies that $\mu^{*}(Q, \om)$ enjoys the analogous properties of $\mu(Q,\om)$ for subsolutions. Throughout the paper, we will focus on showing results for $\mu(Q, \om)$ and the analogous statements hold for $\mu^{*}(Q, \om)$. 

\subsection{Regularity Properties of $\mu(Q, \om)$}

First, we show that $\mu(Q, \om)$ controls the behavior of supersolutions on the parabolic boundary from one side. 
\begin{lem}\label{muabp}
There exists a constant $\c[ptf]=\c[ptf](d)$ such that for every $\om\in \Om$, $(x,t)\in \RR^{d+1}$, $n\in \ZZ$, $u\in S(G_{n}(x,t), \om)$, 
\begin{equation}\label{muabpineq}
\inf_{\partial_{p}G_{n}(x, t)} u\leq \inf_{G_{n}(x, t)} u+\c[ptf]3^{2n}\mu(G_{n}(x,t), \om)^{1/(d+1)}.
\end{equation}
\end{lem}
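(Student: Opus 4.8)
The plan is to establish \pref{muabpineq} as a parabolic analogue of the Aleksandrov–Bakelman–Pucci (ABP) estimate, phrased geometrically in terms of the parabolic subdifferential of the monotone envelope $\Ga^u$. By the parabolic scaling noted in property (2) of $\mu$, it suffices to prove the inequality for $n=0$ on the fixed grid box $G_0$; translation invariance of the statement in $(x,t)$ then follows from stationarity of the setup (we may translate $G_n(x,t)$ back to $G_n$). So fix $u \in S(G_0,\om)$, and write $m := \inf_{G_0} u$ and $m_\partial := \inf_{\partial_p G_0} u$. If $m_\partial \leq m$ there is nothing to prove, so assume $m < m_\partial$; we must show $m_\partial - m \leq \c[ptf]\,\mu(G_0,\om)^{1/(d+1)}$.

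The key step is a ``many slopes are attained'' argument. Let $(x_0,t_0)\in \overline{G_0}$ be a point where $u$ attains its infimum $m$; since $m < m_\partial$, this point lies in the parabolic interior (in particular $t_0 > 0$ and $x_0$ is interior to the spatial slice). For each slope $p \in \RR^d$ with $|p|$ sufficiently small, I claim the affine function $x \mapsto p\cdot x + h$ can be lowered until it touches the graph of $u$ from below on $\{s \leq t_0\}$, and that the touching point lies in $G_0$: indeed, at $(x_0,t_0)$ the value $u(x_0,t_0)=m$ is far below $u$ on $\partial_p G_0$, so a supporting plane of small slope through a point near $(x_0,t_0)$ cannot touch on the lateral or bottom boundary. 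Quantitatively, since $G_0$ has spatial diameter comparable to $1$, any $p$ with $|p| \leq c(m_\partial - m)$ yields a touching point in $G_0$, hence $(p, h) \in \mathcal{P}(G_0; u)$ for the corresponding $h$. Because $\Ga^u$ is the monotone envelope of $u$ and touches $u$ at such points, these slopes also lie in $\mathcal{P}(G_0; \Ga^u)$. The set of admissible $h$ for a fixed $p$ has positive one-dimensional measure (coming from the range of the time variable $t_0$ over which the minimum in the definition of $\mathcal{P}$ can be realized — this is where the parabolic structure enters and gives the $(d+1)$st dimension), so a Fubini-type bound gives
\[
|\mathcal{P}(G_0;\Ga^u)| \;\geq\; c\,\bigl(m_\partial - m\bigr)^{d}\cdot\bigl(m_\partial - m\bigr) \;=\; c\,\bigl(m_\partial - m\bigr)^{d+1}.
\]
Dividing by $|G_0|$, which is a dimensional constant, gives $\mu(G_0,\om) \geq c(m_\partial - m)^{d+1}$, i.e. $m_\partial - m \leq C\,\mu(G_0,\om)^{1/(d+1)}$, which is \pref{muabpineq} at $n=0$.

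The main obstacle is the bookkeeping for the time variable: unlike the elliptic ABP estimate, where one integrates over a ball of slopes in $\RR^d$ and gets the exponent $1/d$, here the exponent $1/(d+1)$ reflects that the parabolic subdifferential $\mathcal{P}$ lives in $\RR^{d+1}$, and one must genuinely produce a $(d+1)$-dimensional set of $(p,h)$ pairs. Getting an honest lower bound on the extra dimension requires checking that, for a fixed small slope $p$, as one varies the ``height at which one stops sliding'' (equivalently, as one varies the admissible minimizing time $t_0$), one sweeps out an interval of $h$-values of length bounded below in terms of $m_\partial - m$ and the time-extent of $G_0$. One must be careful that the monotone envelope's dependence on the time cutoff $s \leq t$ is handled correctly — the definition of $\mathcal{P}((x_0,t_0);u)$ takes the minimum over $\{s \leq t_0\}$, so lower touching planes at later times are automatically valid, and this monotonicity in $t$ is precisely what populates the $h$-fiber. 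A clean way to organize this is to first prove the estimate for smooth parabolically convex $u$ using the area formula in property (3) (where $-w_t\det D^2 w$ explicitly produces the $d+1$ Jacobian factors), and then pass to general $u \in S(G_0,\om)$ by the usual viscosity-solution approximation, using that $\Ga^u$ is a monotone envelope and that the relevant touching-plane counts are stable under uniform limits.
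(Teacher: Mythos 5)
Your proposal is correct and follows essentially the same route as the paper: reduce to $G_0$ by parabolic scaling, slide affine functions of small slope $|p|\lesssim a$ below $u$ to get interior touching points, use continuity of the minimizing value $h(t_0)=\min_{t\leq t_0}(u-p\cdot x)$ in $t_0$ to populate an interval of $h$-values over each slope $p$, and conclude that $\mathcal{P}(G_0;u)$ contains a $(d+1)$-dimensional hypercone of volume $\gtrsim a^{d+1}$. Your closing suggestion to first treat smooth parabolically convex $u$ via the area formula and then approximate is unnecessary here — the direct touching-plane count already handles the general viscosity supersolution, which is what the paper does.
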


\begin{proof}
Without loss of generality, in light of the scaling of $\mu(\cdot, \om)$, it is enough to prove the statement for $G_{0}$. Moreover, we assume that $a:=\inf_{\partial_{p}G_{0}} u-\inf_{G_{0}} u>0$. Let $(x_{0}, t_{0})\in G_{0}$ such that $u(x_{0}, t_{0})=\inf_{G_{0}} u$. This implies that for all $|p|\leq \frac{1}{\sqrt{d}}a$, for all $(y,s)\in \partial_{p}G_{0}$, 

\begin{align*}
u(x_{0}, t_{0})-p\cdot x_{0}=\inf_{\partial_{p}G_{0}} u-a-p\cdot x_{0}&\leq u(y,s)-p\cdot y+p\cdot (y-x_{0})-a\\
&\leq u(y,s)-p\cdot y+a-a= u(y,s)-p\cdot y,
\end{align*}
since $|y-x_{0}|\leq \sqrt{d}$. This implies that the minimum of the map $(x,t)\rightarrow u(x,t)-p\cdot x$ occurs in the interior of $G_{0}$. Thus, for all $|p|\leq \frac{1}{\sqrt{d}}a$, there exists a choice of $h$ such that $(p,h)\in \mathcal{P}(G_{0}; u)$.

For each fixed $p$, with $|p|\leq \frac{1}{\sqrt{d}}a$, we examine which values of $h$ are included in $\mathcal{P}(G_{0}; u)$. Recall that 
\begin{equation*}
h=h(t_{0})=\min_{(x,t)\in G_{0}, t\leq t_{0}} u(x,t)-p\cdot x.
\end{equation*}
 In particular, for each fixed $p$, $h(\cdot):\RR\rightarrow\RR$ is continuous. Therefore, this implies that $(p,h)\in \mathcal{P}(G_{0}; u)$ for all $h\in [u(x_{0}, t_{0})-p\cdot x_{0}, \inf_{\partial_{p}G_{0}} (u(x,t)-p\cdot x)]$. 

Combining these observations, this yields that
\begin{equation}\label{cont}
\left\{(p,h): |p|\leq \frac{1}{\sqrt{d}}a,\, \inf_{G_{0}} u-p\cdot x_{0}\leq h\leq \inf_{\partial_{p}G_{0}} u-p\cdot x \right\}\subseteq \mathcal{P}(G_{0}; u).
\end{equation}
The left side of \pref{cont} contains a hypercone in $\RR^{d+1}$ with base radius $\frac{1}{\sqrt{d}}a$, and height $a$. 

Therefore, we have that for $c=c(d)$,  
\begin{equation*}
ca^{d+1}\leq \left| \mathcal{P}(G_{0}; u)\right|. 
\end{equation*}

Since $\mathcal{P}(G_{0}; u)\subseteq \mathcal{P}(G_{0};\Ga^{u})$, this yields
\begin{equation*}
a\leq \left(\frac{1}{c}\right)^{1/(d+1)}\left(\frac{|\mathcal{P}(G_{0}; \Ga^{u})|}{|G_{0}|}\right)^{1/(d+1)}\leq \c[ptf]\mu(G_{0}, \om)^{1/(d+1)}
\end{equation*}
with $\c[ptf]=\c[ptf](d)$. 
\end{proof}

We now recall several results regarding the regularity of $\Ga^{u}$. These results and their proofs can be found in \cite{krylovconv, tso, wangreg1, cyriluis}. 

It is sometimes useful to use an alternative representation formula for the monotone envelope, in terms of its contact points. We state the lemma here and refer the reader to \cite{cyriluis} for the proof. 
\begin{lem}[\cite{cyriluis}, Lemma 4.5]\label{repform}
$\Ga^{u}$ satisfies the following alternative representation formula:
\begin{align*}
\Ga^{u}(x,t)=\inf \left\{ \sum_{i=1}^{d+1}\la_{i}u(x_{i}, t_{i}): \sum_{i=1}^{d+1} \la_{i} x_{i}=x, t_{i}\in [0,t], \sum_{i=1}^{d+1} \la_{i}=1, \la_{i}\in [0,1]\right\}.
\end{align*}

In particular, if 
\begin{equation*}
\Ga^{u}(x^{0}, t^{0})=\sum_{i=1}^{d+1}\la_{i} u(x_{i}^{0}, t^{0}_{i})\quad\text{with}\quad \la_{i}>0,
\end{equation*}
then 
\begin{itemize}
\item for all $i=1, \ldots, d+1,$ $\Ga^{u}(x^{0}_{i}, t^{0}_{i})=u(x^{0}_{i}, t^{0}_{i}).$\\
\item $\Ga^{u}$ is constant with respect to $t$ and linear with respect to $x$ in the convex set co$\left\{(x^{0}_{i}, t^{0}), (x^{0}_{i}, t^{0}_{i})\right\}_{i=1}^{d+1}$, the convex hull of $\left\{(x^{0}_{i}, t^{0}), (x^{0}_{i}, t^{0}_{i})\right\}_{i=1}^{d+1}$.
\end{itemize}
\end{lem}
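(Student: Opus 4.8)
The plan is to recognize $\Ga^{u}(\cdot,t)$ as an honest convex envelope in the space variable and then invoke Carath\'eodory's theorem. Fix $t$ and set $\tilde{u}_{t}(x):=\inf_{0\le s\le t}u(x,s)$, which is finite since $u$ is continuous and bounded below on $Q$. An affine function $x\mapsto p\cdot x+h$ satisfies $p\cdot y+h\le u(y,s)$ for all $(y,s)\in Q$ with $s\le t$ exactly when $p\cdot y+h\le\tilde{u}_{t}(y)$ for all admissible $y$, so the representation formula defining $\Ga^{u}$ exhibits $\Ga^{u}(\cdot,t)$ as the supremum of the affine minorants of $\tilde{u}_{t}$ on $Q(t)$, that is, as the convex envelope of $\tilde{u}_{t}$. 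I would then quote the classical Carath\'eodory description of the convex envelope on $\RR^{d}$: for any function $f$ bounded below, its convex envelope at $x$ equals $\inf\{\sum_{i=1}^{d+1}\la_{i}f(x_{i}):\ \sum_{i}\la_{i}x_{i}=x,\ \sum_{i}\la_{i}=1,\ \la_{i}\ge 0\}$, where $d+1$ points suffice (rather than $d+2$) by the standard affine-dependence reduction inside $\mathrm{epi}\,f\subseteq\RR^{d+1}$. Applying this with $f=\tilde{u}_{t}$ and substituting $\tilde{u}_{t}(x_{i})=\inf_{t_{i}\in[0,t]}u(x_{i},t_{i})$, the nested infima collapse into one, giving precisely the asserted formula for $\Ga^{u}(x,t)$.

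For the ``in particular'' statement, suppose the infimum is attained at $(x^{0},t^{0})$ with weights $\la_{i}>0$ and contact data $(x_{i}^{0},t_{i}^{0})$, $t_{i}^{0}\le t^{0}$. Using $\Ga^{u}\le u$, that $\Ga^{u}$ is non-increasing in $t$, and that $\Ga^{u}(\cdot,t^{0})$ is convex, one has
\[
\Ga^{u}(x^{0},t^{0})\le\sum_{i}\la_{i}\Ga^{u}(x_{i}^{0},t^{0})\le\sum_{i}\la_{i}\Ga^{u}(x_{i}^{0},t_{i}^{0})\le\sum_{i}\la_{i}u(x_{i}^{0},t_{i}^{0})=\Ga^{u}(x^{0},t^{0}),
\]
so every inequality is an equality; since each $\la_{i}>0$, this forces $\Ga^{u}(x_{i}^{0},t^{0})=\Ga^{u}(x_{i}^{0},t_{i}^{0})=u(x_{i}^{0},t_{i}^{0})$ for all $i$ (the first bullet), and, by monotonicity in $t$, that $\Ga^{u}(x_{i}^{0},\cdot)$ is constant on $[t_{i}^{0},t^{0}]$. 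Equality in the convexity inequality with strictly positive weights means, by the standard rigidity of equality in Jensen's inequality, that $\Ga^{u}(\cdot,t^{0})$ is affine on $\mathrm{co}\{x_{i}^{0}\}$; write $L$ for this affine function, so $L(x_{i}^{0})=u(x_{i}^{0},t_{i}^{0})$. Then for any time $s$ with $\max_{i}t_{i}^{0}\le s\le t^{0}$ we have $\Ga^{u}(x_{i}^{0},s)=L(x_{i}^{0})$ for all $i$; convexity of $\Ga^{u}(\cdot,s)$ gives $\Ga^{u}(\cdot,s)\le L$ on $\mathrm{co}\{x_{i}^{0}\}$, while monotonicity in $t$ together with $\Ga^{u}(\cdot,t^{0})=L$ there gives $\Ga^{u}(\cdot,s)\ge\Ga^{u}(\cdot,t^{0})=L$. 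Hence $\Ga^{u}\equiv L$ on $\mathrm{co}\{x_{i}^{0}\}\times[\max_{i}t_{i}^{0},t^{0}]$ (which is the stated convex hull when the contact times coincide), yielding the asserted constancy in $t$ and linearity in $x$.

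The only genuinely delicate point is the first step: identifying $\Ga^{u}(\cdot,t)$ with the convex envelope of the running time-infimum $\tilde{u}_{t}$, and keeping straight that $\tilde{u}_{t}$ is non-increasing in the time horizon $t$ (a longer past can only lower the infimum), which is the reverse of the monotonicity one might naively expect and is precisely why $\Ga^{u}$ is itself non-increasing in $t$. Once this identification is in place, the representation formula is the standard Carath\'eodory statement, and the contact-point assertions follow purely from the rigidity of equality cases in Jensen's inequality and from the time monotonicity.
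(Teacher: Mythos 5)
The paper itself offers no proof of this lemma---it is cited directly from Imbert and Silvestre \cite{cyriluis} with the remark that the reader should consult that source---so there is no in-paper argument to compare against; I am evaluating your proof on its own. Your derivation of the representation formula is correct: identifying $\Gamma^u(\cdot,t)$ with the convex envelope of the running time-infimum $\tilde u_t(x):=\inf_{0\le s\le t}u(x,s)$ (possible because $Q(t)$ is constant in $t$), then invoking the $(d+1)$-point Carath\'eodory description of the convex envelope, is the right route; the nested infima merge because each $\la_i\ge 0$, and the rigidity argument you invoke for affineness of $\Gamma^u(\cdot,t^0)$ on $\mathrm{co}\{x_i^0\}$ is the standard supporting-hyperplane argument and is valid. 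The chain of equalities giving the first bullet is also correct.

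For the second bullet you are right to flag the gap. Your argument yields $\Gamma^u\equiv L$ only on $\mathrm{co}\{x_i^0\}\times[\max_i t_i^0,\,t^0]$, a strict subset of the asserted convex hull whenever the $t_i^0$ differ, and this gap cannot be closed: the full-convex-hull version of the statement is actually false. Take $d=1$, $Q=(-2,3)\times[0,1]$, $u(x,t)=\min\bigl(x^2+Mt,\ (x-1)^2+M(1-t)\bigr)$ with $M$ large. Then $\Gamma^u(\tfrac12,1)=0=\tfrac12 u(0,0)+\tfrac12 u(1,1)$, so the hypothesis holds with $(x_1^0,t_1^0)=(0,0)$, $(x_2^0,t_2^0)=(1,1)$, $\la_1=\la_2=\tfrac12$, and $L\equiv 0$; yet $\tilde u_{1/2}\equiv x^2$ on this domain, so $\Gamma^u(\tfrac12,\tfrac12)=\tfrac14\neq L(\tfrac12)=0$, even though $(\tfrac12,\tfrac12)=\tfrac12(0,0)+\tfrac12(1,1)$ lies in $\mathrm{co}\{(x_i^0,t^0),(x_i^0,t_i^0)\}$. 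The obstruction is exactly what your proof exposes: the admissibility constraint $t_i\le t$ in the representation formula means a contact point with a later time is unusable at an earlier time, so constancy in $t$ cannot propagate below $\max_i t_i^0$. Your weaker conclusion---affineness on $\mathrm{co}\{x_i^0\}$ for each $t\in[\max_i t_i^0,t^0]$ together with constancy of $\Gamma^u(x_i^0,\cdot)$ on each $[t_i^0,t^0]$---is the correct version and is all that is used downstream (it is what carries the pointwise bounds of Lemma~\ref{lem:upbnds} from the contact set to all of $G_0$, as claimed after \pref{scaled_upest}); the phrasing borrowed from \cite{cyriluis} should be read in this restricted sense.
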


As a consequence of this representation formula, it is natural to expect that $\Ga^{u}$ inherits regularity properties of the function $u$. 

\begin{lem}[\cite{cyriluis}, Lemma 4.11]\label{mereg}
Suppose that $u_{t}+\MM^{+}(D^{2}u)\geq -1$. The function $\Ga^{u}$ is $C^{1,1}$ with respect to $x$ and Lipschitz continuous with respect to $t$. In particular, $\mathcal{P}[\Ga^{u}]: \RR^{d+1}\rightarrow \RR^{d+1}$ is Lipschitz continuous with respect to $(x,t)$. 
\end{lem}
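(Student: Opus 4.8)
The plan is to localize the regularity of $\Ga^{u}$ to the contact set $\mathcal{C}:=\{(x,t):\Ga^{u}(x,t)=u(x,t)\}$ and to extract the two missing one‑sided bounds — $D^{2}_{x}\Ga^{u}$ from above and $\partial_{t}\Ga^{u}$ from below, the other two sides being automatic from parabolic convexity — out of the viscosity supersolution inequality, exploiting that $\Ga^{u}$ lies below $u$. First, by Lemma \ref{repform}, at any point off $\mathcal{C}$ the envelope is affine in $x$ and constant in $t$ on the convex hull of the contact points realizing the infimum in the representation formula, hence smooth there; so it suffices to produce uniform bounds at points of $\mathcal{C}$. The crucial observation is that \emph{any $C^{2}$ function touching $\Ga^{u}$ from below at a point of $\mathcal{C}$ also touches $u$ from below there} (since $\Ga^{u}\leq u$ on the domain, with equality on $\mathcal{C}$), and therefore satisfies $\phi_{t}+\MM^{+}(D^{2}_{x}\phi)\geq -1$ at that point.

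For the $C^{1,1}$‑in‑$x$ bound I would work at a point $(x_{0},t_{0})\in\mathcal{C}$ at which $\Ga^{u}(\cdot,t_{0})$ admits an Alexandrov second‑order expansion (true for a.e.\ such point, by convexity in $x$ and Fubini), and open this expansion into a test paraboloid that is \emph{constant in $t$} with $x$‑Hessian $D^{2}_{x}\Ga^{u}(x_{0},t_{0})-\ve I$; monotonicity of $\Ga^{u}$ in $t$ shows this paraboloid lies below $u$ on a full parabolic neighborhood of $(x_{0},t_{0})$ restricted to $\{s\leq t_{0}\}$, so the inequality gives $\MM^{+}(D^{2}_{x}\Ga^{u}(x_{0},t_{0})-\ve I)\geq -1$. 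Letting $\ve\to 0$ and using $D^{2}_{x}\Ga^{u}(x_{0},t_{0})\geq 0$, for which $\MM^{+}(\cdot)=-\la\,\tr(\cdot)$, yields $\tr D^{2}_{x}\Ga^{u}(x_{0},t_{0})\leq \la^{-1}$, hence $0\leq D^{2}_{x}\Ga^{u}\leq\la^{-1}I$ a.e.\ on $\mathcal{C}$ and $\equiv 0$ off $\mathcal{C}$. The same touching‑from‑below argument shows a supersolution cannot have a convex kink — a smooth function with a large positive Hessian in the valley direction would touch it from below and violate the inequality — so $D^{2}_{x}\Ga^{u}$ has no singular part and $\Ga^{u}$ is genuinely $C^{1,1}$ in $x$ with the stated constant.

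For the Lipschitz‑in‑$t$ bound I would again localize, using Lemma \ref{repform} to reduce every point either to a piece of $\Ga^{u}$ that is constant in $t$ or to an extremal contact point; at such a contact point $\Ga^{u}=u$ and one compares difference quotients in $t$ of $\Ga^{u}$ against those of $u$, the supersolution inequality applied with the time slope in the parabolic subjet together with the Hessian bound just obtained capping the admissible downward time slope. Combining with $\partial_{t}\Ga^{u}\leq 0$ gives $-C\leq\partial_{t}\Ga^{u}\leq 0$ with $C=C(\la,\La,d)$. Finally, by the identity $\mathcal{P}[w]=(D_{x}w,\ w-D_{x}w\cdot x)$ for parabolically convex $w$ recorded above, both components of $\mathcal{P}[\Ga^{u}]$ are Lipschitz in $x$, and interpolating the Hessian bound against the Lipschitz‑in‑$t$ bound makes $D_{x}\Ga^{u}$ Hölder‑$\tfrac12$ in $t$; hence $\mathcal{P}[\Ga^{u}]$ is Lipschitz with respect to the parabolic metric $d[\cdot,\cdot]$, which is the assertion.

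The main obstacle is the Lipschitz‑in‑$t$ estimate and, more broadly, the parabolic coupling. Since $\Ga^{u}(\cdot,t)$ is the $x$‑convex envelope of $\inf_{s\leq t}u(\cdot,s)$, the envelope's behavior in $t$ is not inherited from $u$ (which need not even be Lipschitz in $t$), and the bound must be squeezed out of parabolic convexity together with the equation at contact points; one must also take care that every subtangent used is genuinely admissible as a test function for $u$ over $\{s\leq t_{0}\}$ near the contact point, and that the Alexandrov expansions are controlled jointly in $(x,t)$ rather than merely slicewise. Once this geometry is in place, converting a touching into the eigenvalue inequality through $\MM^{+}$ is routine. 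This is precisely \cite{cyriluis}, Lemma 4.11, whose proof we follow.
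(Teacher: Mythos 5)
The paper does not prove Lemma \ref{mereg} at all; it is imported verbatim (together with Lemmas \ref{repform} and \ref{meeq}) from Imbert--Silvestre \cite{cyriluis}, so there is no in-paper argument to compare against. Judged on its own terms, your sketch captures the right skeleton of the cited proof: the crucial observation that a $C^{2}$ function touching $\Ga^{u}$ from below at a contact point also touches $u$ from below; the spatial $C^{1,1}$ bound obtained by opening the Alexandrov expansion of $\Ga^{u}(\cdot,t_{0})$ into a time-constant paraboloid and reading off $\MM^{+}(D^{2}_{x}\Ga^{u})\geq -1$, hence $0\leq D^{2}_{x}\Ga^{u}\leq \la^{-1}I$; and the final interpolation, $|D_{x}\Ga^{u}(x,t)-D_{x}\Ga^{u}(x,s)|\lesssim \sqrt{KL}\,|t-s|^{1/2}$, showing $\mathcal{P}[\Ga^{u}]$ is Lipschitz with respect to the parabolic metric $d[\cdot,\cdot]$. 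These parts are sound.

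Where the sketch is genuinely incomplete is the Lipschitz-in-$t$ bound, and the specific mechanism you describe does not quite work as stated. You propose to ``apply the supersolution inequality with the time slope in the parabolic subjet'' at a contact point. But for a parabolically convex function, which is \emph{non-increasing} in $t$, a test function with time slope $a=\partial_{t}\Ga^{u}(x_{0},t_{0})<0$ \emph{increases} as $t$ decreases; to verify $\phi\leq \Ga^{u}$ for $t\leq t_{0}$ one must already know $\Ga^{u}$ increases at least as fast in the backward-time direction --- which is essentially the Lipschitz bound being sought. So at a contact point the slope $\partial_{t}\Ga^{u}$ is not automatically an admissible element of the parabolic subjet, and the argument as written is circular. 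Likewise, ``comparing difference quotients in $t$ of $\Ga^{u}$ against those of $u$'' cannot directly close the gap, since $u$ carries no a priori time regularity. The actual mechanism in \cite{cyriluis} is more indirect: it first secures the spatial $C^{1,1}$ bound \emph{everywhere} (propagating the contact-set estimate along the flat pieces via the representation formula, as the present paper also does after Lemma \ref{lem:upbnds}), and then, when $\Ga^{u}(x_{0},\cdot)$ drops between $t_{0}$ and $t_{0}+h$, uses the representation formula to locate a genuinely later contact point $(\bar x,\bar t)$ with $\bar t\in(t_{0},t_{0}+h]$, where a paraboloid built from the \emph{earlier} supporting plane and the Hessian bound is a legitimate test function touching $u$ from below; the equation at $(\bar x,\bar t)$ then caps the rate of drop. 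Your write-up honestly flags this as the main obstacle, but it should be spelled out that the subjet route at the original contact point is not the way through it.
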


In addition, if $u$ is a supersolution to Pucci's equation, it turns out that $\Ga^{u}$ is actually a supersolution to a linear equation almost everywhere:

\begin{lem}[\cite{cyriluis}, Lemma 4.12]\label{meeq}
Suppose that $u_{t}+\MM^{+}(D^{2}u)\geq -1$. The partial derivatives $(\Ga^{u}_{t}, D^{2}\Ga^{u})$ satisfy almost everywhere, 
\begin{equation*}
\Ga^{u}_{t}-\la \Delta \Ga^{u}\geq -1\quad\text{in}\quad Q\cap \left\{u=\Ga^{u}\right\}.
\end{equation*}
\end{lem}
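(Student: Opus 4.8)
The plan is to argue at the (a.e.) points where $\Ga^u$ has a genuine second-order parabolic Taylor expansion, and at each such point that is also a contact point, build an explicit quadratic polynomial touching $u$ from below in a parabolic neighborhood, to which the supersolution property of $u$ applies.

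\emph{Step 1 (a.e. second-order expansion).} By Lemma~\ref{mereg}, $\Ga^u$ is $C^{1,1}$ in $x$, with a second-derivative bound uniform in $t$, and Lipschitz in $t$. Combining this with the parabolic convexity of $\Ga^u$ and the parabolic analogue of Alexandrov's theorem for parabolically convex functions, at a.e.\ $(x_0,t_0)\in Q$ there exist $p=D\Ga^u(x_0,t_0)$, $h=\Ga^u_t(x_0,t_0)$ and $A=D^2\Ga^u(x_0,t_0)\geq 0$ (nonnegativity since $\Ga^u(\cdot,t)$ is convex) such that
\[
\Ga^u(x,t)=\Ga^u(x_0,t_0)+p\cdot(x-x_0)+h(t-t_0)+\tfrac12 A(x-x_0)\cdot(x-x_0)+o\!\left(|x-x_0|^2+|t-t_0|\right)
\]
as $(x,t)\to(x_0,t_0)$. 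Since the set where this fails has measure zero, a.e.\ point of $Q\cap\{u=\Ga^u\}$ is of this type; fix such an interior point $(x_0,t_0)$.

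\emph{Step 2 (parabolic subtangent).} For $\de>0$ set
\[
\phi_\de(x,t):=\Ga^u(x_0,t_0)+p\cdot(x-x_0)+(h+\de)(t-t_0)+\tfrac12\,(A-\de I)(x-x_0)\cdot(x-x_0).
\]
For $t\leq t_0$ one computes $\Ga^u(x,t)-\phi_\de(x,t)=\de|t-t_0|+\tfrac\de2|x-x_0|^2+o(|x-x_0|^2+|t-t_0|)$, so for $r$ small enough the gain dominates the remainder and $\Ga^u\geq\phi_\de$ on $Q_r(x_0,t_0)$, with equality at $(x_0,t_0)$. Using $u\geq\Ga^u$ on $Q$ and $u(x_0,t_0)=\Ga^u(x_0,t_0)$, the smooth function $\phi_\de$ thus touches $u$ from below at $(x_0,t_0)$ in the parabolic sense, i.e.\ $u-\phi_\de$ has a minimum over $Q_r(x_0,t_0)$ at $(x_0,t_0)$.

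\emph{Step 3 (viscosity inequality and limit).} Since $u$ is a viscosity supersolution of $u_t+\MM^+(D^2u)=-1$, testing with $\phi_\de$ gives $(\phi_\de)_t(x_0,t_0)+\MM^+(D^2\phi_\de(x_0,t_0))\geq-1$, that is $h+\de+\MM^+(A-\de I)\geq -1$. Letting $\de\to 0^+$ and using that $\MM^+$ is (Lipschitz) continuous yields $h+\MM^+(A)\geq -1$. Finally, since $A=D^2\Ga^u(x_0,t_0)\geq 0$, all its eigenvalues are nonnegative, so $\MM^+(A)=-\la\,\tr(A)=-\la\,\De\Ga^u(x_0,t_0)$, and therefore $\Ga^u_t(x_0,t_0)-\la\,\De\Ga^u(x_0,t_0)\geq-1$ at a.e.\ point of $Q\cap\{u=\Ga^u\}$, as claimed. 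The analogous statement for the operator with $\MM^-$ or for subsolutions follows by the same computation (or by the involution $\om\mapsto\om^*$).

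\emph{Main obstacle.} The only genuinely delicate ingredient is Step~1: one needs not merely that $D_x^2\Ga^u$ and $\Ga^u_t$ exist a.e., but a true pointwise second-order Taylor expansion with remainder controlled in the parabolic quantity $|x-x_0|^2+|t-t_0|$, since the touching argument lives on parabolic cylinders. This is precisely the parabolic Alexandrov theorem for parabolically convex functions; granting it, the remaining steps are the routine comparison-with-smooth-test-functions computation together with the elementary identity $\MM^+(A)=-\la\,\tr(A)$ for $A\geq 0$.
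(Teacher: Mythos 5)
The paper does not prove this lemma; it is cited verbatim from \cite{cyriluis}, Lemma 4.12, so there is no in-paper argument to compare against. Your reconstruction is mathematically correct and follows the standard route. You correctly identify the one genuinely delicate ingredient: to make the perturbed paraboloid $\phi_\delta$ touch $u$ from below over a parabolic cylinder $Q_r(x_0,t_0)$, one needs at a.e.\ point a second-order Taylor expansion of $\Ga^u$ with remainder $o(|x-x_0|^2 + |t-t_0|)$, not merely a.e.\ existence of $D^2_x\Ga^u$ and $\Ga^u_t$. Given Lemma \ref{mereg}, which already yields that $\Ga^u$ is $C^{1,1}$ in $x$ uniformly in $t$ and Lipschitz in $t$, the natural tool is the parabolic Alexandrov-type expansion for $C^{1,1}_x \cap \mathrm{Lip}_t$ functions rather than a full Alexandrov theorem for parabolically convex functions; this is a standard, if not always carefully stated, fact in the parabolic viscosity literature. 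The remainder of your argument --- the $\delta$-perturbation, the observation that for $t\le t_0$ one gains $\delta|t-t_0| + \tfrac{\delta}{2}|x-x_0|^2$ over the Taylor remainder, the passage $\delta\to 0^+$, and the identity $\MM^+(A) = -\lambda\,\tr(A)$ for $A\ge 0$ --- is correct and completes the proof.
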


We next establish a lemma which shows that  in fact, $|\mathcal{P}(Q; u)|=|\mathcal{P}(Q; \Ga^{u})|.$ As previously mentioned, it is immediate that $\mathcal{P}(Q; u)\subseteq \mathcal{P}(Q; \Ga^{u})$, and thus $|\mathcal{P}(Q; u)|\leq |\mathcal{P}(Q; \Ga^{u})|$. In order to conclude, it is enough to show the following lemma, which is the parabolic analogue of Lemma 2.4 of \cite{asellip}. 

\begin{lem}\label{cs}
Let $Q\subseteq \RR^{d+1}$ denote an open subset, with $u\in C(Q)$, $(x_{0}, t_{0})\in Q$, and $r>0$ such that 
\begin{equation*}
Q_{r}(x_{0}, t_{0})\subseteq \left\{(x,t)\in Q: \Ga^{u}(x,t)<u(x,t)\right\}=\left\{\Ga^{u}<u\right\}.
\end{equation*}
Then $|\mathcal{P}(Q_{r}(x_{0}, t_{0}); \Ga^{u})|=0.$
\end{lem}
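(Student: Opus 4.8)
The plan is to show that on the region where $\Ga^u$ lies strictly below $u$, the monotone envelope is ``ruled'': through every point of $\{\Ga^u < u\}$ there passes a segment (in the $x$-variable, at fixed time, or more generally a direction in the degeneracy cone coming from Lemma~\ref{repform}) along which $\Ga^u$ is affine. Concretely, fix $(x_1,t_1) \in Q_r(x_0,t_0)$ and apply the representation formula of Lemma~\ref{repform}: we may write $\Ga^u(x_1,t_1) = \sum_{i=1}^{d+1}\la_i u(x_i^1,t_i^1)$ with $\la_i \geq 0$ summing to $1$. Since $(x_1,t_1)\notin\{\Ga^u = u\}$ while each contact point $(x_i^1,t_i^1)$ satisfies $\Ga^u = u$ there, at least two of the $\la_i$ are strictly positive and at least one contact point is genuinely distinct from $(x_1,t_1)$. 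By the second bullet of Lemma~\ref{repform}, $\Ga^u$ is constant in $t$ and linear in $x$ on the convex hull of the points $\{(x_i^1,t^1),(x_i^1,t_i^1)\}$, which is a nondegenerate convex set containing $(x_1,t_1)$ in its (relative) interior directions. Thus $(x_1,t_1)$ admits a nonzero vector $e$ with $\Ga^u$ affine on a segment through $(x_1,t_1)$ in direction $e$.

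Next I would use this to control the parabolic subdifferential. If $(p,h)\in\mathcal{P}((x_1,t_1);\Ga^u)$, then $p$ is a subgradient (in $x$) of $\Ga^u(\cdot, s)$ for $s\le t_1$, and because $\Ga^u$ is affine along the segment through $(x_1,t_1)$ in direction $e$, the subgradient is forced to be constant along that segment: every $(p,h)$ associated to interior points of that segment coincides. In other words, the map $\mathcal{P}[\Ga^u]$ collapses a neighborhood's worth of directions, so its image locally lies in a set of lower Hausdorff dimension — at each point of $\{\Ga^u<u\}$ the subdifferential image is contained in a hyperplane (or lower) rather than being full-dimensional. Since, by Lemma~\ref{mereg}, $\mathcal{P}[\Ga^u]$ is Lipschitz, the area/coarea formula (as recorded in the remarks after \eqref{mudef}, using $\det\mathcal{D}\mathcal{P}[\Ga^u]$) then gives that the Jacobian determinant of $\mathcal{P}[\Ga^u]$ vanishes a.e.\ on $\{\Ga^u<u\}$: a Lipschitz map that is degenerate (rank-deficient) at a.e.\ point of a set sends that set to a null set. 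Applying this on $Q_r(x_0,t_0)\subseteq\{\Ga^u<u\}$ yields $|\mathcal{P}(Q_r(x_0,t_0);\Ga^u)|=0$.

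To make the degeneracy-to-measure-zero step rigorous I would argue as follows: cover $\{\Ga^u<u\}$ by countably many small parabolic cylinders; on each, after the reduction above, $\mathcal{P}[\Ga^u]$ is Lipschitz and, at a.e.\ point, $\mathcal{D}\mathcal{P}[\Ga^u]$ is not invertible (its determinant $-\Ga^u_t\det D^2\Ga^u$ is zero because $\Ga^u$ is affine in the $e$-direction, forcing a zero eigenvalue of $D^2\Ga^u$, or $\Ga^u_t=0$). Then the area formula from the preliminary computation gives $|\mathcal{P}(Q_r;\Ga^u)| = \int_{Q_r}|\det\mathcal{D}\mathcal{P}[\Ga^u]|\,dx\,dt = 0$. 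I expect the main obstacle to be the measurability/regularity bookkeeping at the ``contact versus non-contact'' interface: the degeneracy direction $e$ at a point of $\{\Ga^u<u\}$ is obtained pointwise from Lemma~\ref{repform}, and one must verify this forces $D^2\Ga^u$ (which by Lemma~\ref{mereg} exists a.e.) to be rank-deficient \emph{a.e.}\ on $Q_r$, not merely at a dense set — handling this carefully (e.g.\ via a Lebesgue-point / Alexandrov-type second differentiability argument for the $C^{1,1}_x$, Lipschitz-in-$t$ function $\Ga^u$) is the technical heart. The elliptic prototype is Lemma~2.4 of \cite{asellip}, and the parabolic version just needs the direction $e$ to be handled either as a pure spatial segment (when $t^1 = t_i^1$) or, when times differ, the ``constant in $t$'' clause of Lemma~\ref{repform} supplies the degeneracy in the time variable instead.
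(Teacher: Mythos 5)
Your proposal takes a genuinely different route from the paper's. The paper proves Lemma~\ref{cs} by contradiction: if $|\mathcal{P}(Q_r;\Ga^u)|>0$, it picks a Lebesgue density point $(p,h)$ of $\mathcal{P}(Q_r;\Ga^u)$ whose spatial component $p$ is also a density point of the elliptic subdifferential at a fixed time, normalizes so that $\Ga^u(x',t')=0$ with $(p,h)=(0,0)$, and then uses the density property to force $\Ga^u>0$ on the complement of $Q_{2r}$ for $t\le t'$, which combined with $u>\Ga^u$ on $Q_{3r}$ contradicts $\Ga^u(x',t')=0$. Your approach instead extracts a degeneracy direction at every non-contact point from Lemma~\ref{repform}, shows the Jacobian of $\mathcal{P}[\Ga^u]$ vanishes a.e.\ on $\{\Ga^u<u\}$, and invokes the area formula.

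There is, however, a genuine gap in your argument as written, and it is not the one you flag. You invoke Lemma~\ref{mereg} to assert that $\mathcal{P}[\Ga^u]$ is Lipschitz, and you need this in order to apply the area formula and to speak of $D^2\Ga^u$ and $\Ga^u_t$ existing a.e.\ as classical derivatives of a Lipschitz map. But Lemma~\ref{mereg} requires the viscosity inequality $u_t+\MM^+(D^2u)\ge -1$, whereas Lemma~\ref{cs} is stated for an \emph{arbitrary} $u\in C(Q)$. Without that inequality, $\Ga^u$ is parabolically convex but need not be $C^{1,1}$ in $x$ or Lipschitz in $t$, the map $\mathcal{P}[\Ga^u]$ need not be single-valued or Lipschitz, and the Lipschitz area formula is unavailable. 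The paper's density-point argument sidesteps this entirely: it never needs any quantitative regularity of $\Ga^u$ beyond parabolic convexity and the defining variational inequality of the envelope. So your proof establishes Lemma~\ref{cs} only under an additional hypothesis. In the paper's usage (Lemma~\ref{area}, Lemma~\ref{lemdecomp}) that hypothesis does hold, but it is not part of the statement, and recovering the general statement would require replacing the Lipschitz area formula by an area formula for monotone set-valued maps of parabolically convex functions --- a nontrivial substitute you would need to supply.

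The ``technical heart'' you flag, by contrast, is more manageable than you suggest once the regularity is granted. At a.e.\ $(x,t)\in\{\Ga^u<u\}$, $\mathcal{P}[\Ga^u]$ is differentiable, hence $\Ga^u_t(x,t)$ and $D^2\Ga^u(x,t)$ both exist. If the minimizing representation in Lemma~\ref{repform} has at least two strictly positive weights, $x$ is a strict convex combination and so lies in the relative interior of a spatial segment on which $\Ga^u(\cdot,t)$ is affine; together with the existing Hessian this forces $D^2\Ga^u(x,t)\,e=0$. If only one weight is positive, the contact time is strictly earlier (else $(x,t)$ would be a contact point), so $\Ga^u(x,\cdot)$ is constant on a left-neighborhood of $t$; the left time-derivative vanishes, and since the two-sided derivative exists a.e.\ and $\Ga^u$ is nonincreasing, $\Ga^u_t(x,t)=0$. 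Either way $-\Ga^u_t\det D^2\Ga^u=0$, and the area formula then closes the argument. So the architecture is sound; the missing ingredient is the regularity hypothesis needed to license it, which the paper's proof does not require.
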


\begin{proof}
Without loss of generality, we may assume that $r<1$. Moreover, by a covering argument, it is enough to show that $|\mathcal{P}(Q_{r}(x_{0}, t_{0}); \Ga^{u})|=0$ assuming that $Q_{3r}(x_{0}, t_{0})\subseteq \left\{\Ga^{u}<u\right\}$. 

Suppose for the purposes of contradiction that $|\mathcal{P}(Q_{r}(x_{0}, t_{0}); \Ga^{u})|>0$. Since the measure is positive, by the Lebesgue density theorem, almost every $(p,h)\in \mathcal{P}(Q_{r}(x_{0}, t_{0}); \Ga^{u})$ is a density point. We mention that the density theorem still holds for parabolic cylinders and we refer the reader to the appendix of \cite{cyriluis} for a proof. We next have the following claim:
\begin{claim} There exists $(x',t')\in Q_{r}(x_{0}, t_{0})$ and $(p,h)\in \mathcal{P}((x',t'); \Ga^{u})$ so that $(p,h)$ is a Lebesgue density point of $\mathcal{P}(Q_{r}(x_{0}, t_{0}); \Ga^{u})$, and also, $p\in \partial \Ga^{u}[t'](x')$ is a Lebesgue density point of $\partial \Ga^{u}[t'](B_{r}(x_{0}))$. 
\end{claim}
\noindent This follows from applying the Lebesgue density theorem to both $\mathcal{P}(Q_{r}(x_{0}, t_{0}); \Ga^{u})$ and $\partial \Ga^{u}[t'](B_{r}(x_{0}))$ for some $t'$ where $|\partial \Ga^{u}[t'](B_{r}(x_{0}))|>0$. By adding an affine function in space and translating, we may assume that $x_{0}=0$, $t_{0}=0$, $\Ga^{u}(x',t')=0$, and $(p',h')=(0,0)$.  

Since $0$ is a Lebesgue density point of $\partial \Ga^{u}[t'](B_{r})$, for any $\overline{x}\in \partial B_{r}$ for $r$ sufficiently small, there exists a $\overline{p}\in \partial \Ga^{u}[t'](B_{r})\setminus 0$ such that 
\begin{equation*}
\overline{p}\cdot \overline{x}\geq \frac{3}{4}|\overline{p}||\overline{x}|.
\end{equation*}
Suppose that $\overline{p}\in \partial \Ga^{u}[t'](y)$. Since $\Ga^{u}(\cdot, t')\geq 0$ in $B_{r}$, this implies that for any $\al\geq 2$, 
\begin{equation*}
\Ga^{u}(\al \overline{x}, t')\geq \Ga^{u}(y, t')+\overline{p}\cdot (\al \overline{x}-y) \geq \al \overline{p}\cdot\overline{x}-\overline{p}\cdot y\geq \frac{3}{4} \al r|\overline{p}|-r|\overline{p}|>0.
\end{equation*}

This and the monotonicity of $\Ga^{u}$ allows us to conclude that 
 \begin{equation*}
 \Ga^{u}>0 \quad\text{on}\quad \left\{|x|\geq 2r, \forall t\leq t'\right\}.
 \end{equation*}
 
 Moreover, we point out that since $(0,0)$ is a Lebesgue point of $\mathcal{P}(Q_{r}; \Ga^{u})$, for each $|x|\leq r<1$, there exists $(p_{2}, h_{2})\in \mathcal{P}(Q_{r}; \Ga^{u})\setminus (0,0)$ 
 
 \begin{equation*}
p_{2}\cdot x+h_{2} r^{2}>\frac{3}{4}|(p_{2}, h_{2})||(x, r^{2})|>0.
 \end{equation*}
 
 Let $(p_{2}, h_{2})\in \mathcal{P}((y,s); \Ga^{u})$ for $(y,s)\in Q_{r}$. This implies that for all $t\leq s$, for all $|x|\leq r$, since $h_{2}\geq 0$ and $r<1$, 
 
 \begin{equation*}
 \Ga^{u}(x,t)\geq p_{2}\cdot x+h_{2}=p_{2}\cdot x+h_{2}r^{2}+h_{2}(1-r^{2})>0.
 \end{equation*}
 \medskip
 \noindent Therefore, for all $t\leq -r^{2}$, we conclude again that $\Ga^{u}>0$. This implies that 
 \begin{equation*}
\Ga^{u}>0\quad\text{in}\quad (Q\setminus Q_{2r})\cap \left\{t\leq t'\right\}.
 \end{equation*}
 
However, since $u>\Ga^{u}$ on $Q_{3r}$, this implies that $u>0$ on all of $Q\cap\left\{t\leq t'\right\}$. This contradicts that $\Ga^{u}(x', t')=0$, and hence we have the claim. 
\end{proof}

This regularity allows us to establish

\begin{lem}\label{area}
Assume that $Q\subseteq \RR^{d+1}$ is bounded and open, and $u\in C(Q)$ satisfies
\begin{equation*}
u_{t}+\MM^{+}(D^{2}u)\geq -1,
\end{equation*}
then there exists $\c[contact]=\c[contact](\la, d)$ such that
\begin{equation}\label{contact}
\left| \mathcal{P}(Q; \Ga^{u})\right| \leq \c[contact] \left| \left\{u=\Ga^{u}\right\}\cap Q\right|.
\end{equation}
\end{lem}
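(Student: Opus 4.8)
The plan is to reduce the estimate to the linear-equation bound on the contact set via the measure-theoretic facts already assembled. By Lemma \ref{cs}, every point of $\mathcal{P}(Q;\Ga^u)$ arising from a point $(x_0,t_0)$ with $Q_r(x_0,t_0)\subseteq\{\Ga^u<u\}$ contributes nothing to the Lebesgue measure; hence, up to a null set, $|\mathcal{P}(Q;\Ga^u)| = |\mathcal{P}(\{u=\Ga^u\}\cap Q;\Ga^u)|$. (More carefully, since $\{\Ga^u<u\}$ is open, it is a countable union of parabolic cylinders to which Lemma \ref{cs} applies, so $|\mathcal{P}(\{\Ga^u<u\}\cap Q;\Ga^u)|=0$; the remaining contribution comes only from the contact set.) First I would record this reduction precisely.

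Next I would use the regularity of $\Ga^u$ from Lemmas \ref{mereg} and \ref{meeq}. By Lemma \ref{mereg}, $\mathcal{P}[\Ga^u]$ is Lipschitz on $\RR^{d+1}$, so by the area formula (as in property (3) of Section \ref{sec:mu}),
\begin{equation*}
\left|\mathcal{P}(\{u=\Ga^u\}\cap Q;\Ga^u)\right| \leq \int_{\{u=\Ga^u\}\cap Q} \left|\det \mathcal{D}\mathcal{P}[\Ga^u]\right|\,dx\,dt = \int_{\{u=\Ga^u\}\cap Q} \left(-\Ga^u_t\right)\det D^2\Ga^u \,dx\,dt,
\end{equation*}
where the integrand is nonnegative a.e.\ since $\Ga^u$ is parabolically convex. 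The point is now to bound the integrand pointwise a.e.\ on the contact set. On $\{u=\Ga^u\}$, Lemma \ref{meeq} gives $\Ga^u_t-\la\Delta\Ga^u\geq-1$ a.e., i.e.\ $-\Ga^u_t + \la\Delta\Ga^u \leq 1$, with $-\Ga^u_t\geq0$ and $\Delta\Ga^u\geq0$ (again by parabolic convexity). By the arithmetic–geometric mean inequality applied to the $d$ eigenvalues of $D^2\Ga^u$, we have $\det D^2\Ga^u \leq (\Delta\Ga^u/d)^d$, so
\begin{equation*}
\left(-\Ga^u_t\right)\det D^2\Ga^u \leq \left(-\Ga^u_t\right)\left(\frac{\Delta\Ga^u}{d}\right)^d \leq \frac{1}{d^d\la^d}\left(-\Ga^u_t\right)\left(\la\Delta\Ga^u\right)^d.
\end{equation*}
Since $-\Ga^u_t + \la\Delta\Ga^u\leq 1$ with both summands nonnegative, the product $(-\Ga^u_t)(\la\Delta\Ga^u)^d$ is bounded by $\max_{0\le a,b,\,a+b\le1} a b^d = \tfrac{1}{d+1}\bigl(\tfrac{d}{d+1}\bigr)^d \leq 1$. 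This yields $(-\Ga^u_t)\det D^2\Ga^u \leq d^{-d}\la^{-d}$ a.e.\ on the contact set, and integrating gives \pref{contact} with $\c[contact]=d^{-d}\la^{-d}$.

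The main obstacle I anticipate is making the area-formula step fully rigorous for the merely-Lipschitz map $\mathcal{P}[\Ga^u]$: one needs $\Ga^u\in C^{1,1}_x$ and Lipschitz in $t$ (Lemma \ref{mereg}) so that $\mathcal{D}\mathcal{P}[\Ga^u]$ is defined a.e.\ and the change-of-variables/area-formula inequality $|\mathcal{P}(E;\Ga^u)|\leq\int_E|\det\mathcal{D}\mathcal{P}[\Ga^u]|$ holds for Lipschitz maps on the measurable set $E=\{u=\Ga^u\}\cap Q$; this is the parabolic analogue of the elliptic computation and relies on the cited references \cite{evansgarbook, cyriluis}. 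A secondary technical point is checking that the almost-everywhere relations from Lemmas \ref{mereg}–\ref{meeq} (convexity of $\Ga^u(\cdot,t)$, monotonicity in $t$, and the differential inequality) hold simultaneously a.e.\ on the contact set, so that the pointwise AM–GM bound can be applied; this is routine given the stated regularity.
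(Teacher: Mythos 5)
Your proof is correct and follows essentially the same strategy as the paper: reduce to the contact set (via Lemma \ref{cs} and the regularity from Lemma \ref{mereg}), apply the area formula, and then use the AM--GM inequality together with the linear inequality of Lemma \ref{meeq} to bound the integrand pointwise. The only cosmetic difference is that you split the AM--GM step into two stages (first on the $d$ eigenvalues of $D^{2}\Ga^{u}$, then an elementary maximization of $ab^{d}$ under $a+b\leq 1$), whereas the paper applies AM--GM directly to the $d+1$ quantities $-\Ga^{u}_{t}, \la(\Ga^{u})_{11},\ldots,\la(\Ga^{u})_{dd}$; both yield the same constant up to dimension-dependent factors.
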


\begin{proof}
Given the regularity of $\Ga^{u}$ established by Lemma \ref{mereg}, we apply the area formula for Lipschitz functions to conclude that 
\begin{align*}
\left| \mathcal{P}(Q; \Ga^{u})\right| &=\int _{Q} \det \mathcal{D}\mathcal{P}(\Ga^{u})= \int_{Q\cap \left\{u=\Ga^{u}\right\}} -\Ga^{u}_{t}\det D^{2}\Ga^{u}\\
&=\la^{-d} \int_{Q\cap\left\{u=\Ga^{u}\right\}} -\Ga^{u}_{t}\det D^{2}\la \Ga^{u}.
\end{align*}

By applying the geometric-arithmetic mean inequality and Lemma \ref{meeq}, we have that 
\begin{align*}
\la^{-d} \int_{Q\cap\left\{u=\Ga^{u}\right\}} -\Ga^{u}_{t}\det D^{2}\la \Ga^{u}\,dxdt&\leq c(\la, d)\int_{Q\cap\left\{u=\Ga^{u}\right\}} \left[ -\Ga^{u}_{t}+\la \Delta \Ga^{u}\right]^{d+1}\, dxdt\\
&\leq c\int _{Q\cap \left\{u=\Ga^{u}\right\}}1\,dxdt=c\left|\left\{u=\Ga^{u}\right\}\cap Q\right|,
\end{align*}
which yields \pref{contact}. 
\end{proof}
{
We next claim that $\lim_{n\rightarrow\infty} \mu(G_{n}, \om)$ exists almost surely. This will follow by an application of the subadditive ergodic theorem of Akcoglu and Krengel \cite{akerg} to the quantity 
\begin{equation*}
\sup_{u\in S(G_{n}, \om)}|\mathcal{P}(G_{n}; \Ga^{u})|.
\end{equation*}
We point out that the result of \cite{akerg} also holds for cubes with parabolic scaling. In order to verify the hypotheses, we first show a decomposition property of $\mu(\cdot, \om)$:}

\begin{lem}\label{lemdecomp}
For each $\om\in \Om, n\in \ZZ, m\in \NN$, 
\begin{equation}\label{decomp}
\mu(G_{n+m}, \om)\leq \aint_{G_{n+m}}\mu(G_{n}(x,t), \om)~dxdt.
\end{equation}
\end{lem}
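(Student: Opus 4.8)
The plan is to establish the subadditivity inequality \pref{decomp} by a direct pasting argument: take a near-optimal supersolution $u$ on the large cube $G_{n+m}$ and show that its contribution to $|\mathcal{P}(G_{n+m};\Ga^{u})|$ can be distributed among the subcubes $G_n(x,t)$ in the partition, with each subcube contributing no more than $|G_n|\,\mu(G_n(x,t),\om)$. Concretely, I would fix $u\in S(G_{n+m},\om)$ and let $\{G_n(x_j,t_j)\}_j$ be the (finitely many) grid cubes of generation $n$ that tile $G_{n+m}$; note each such cube satisfies $G_n(x_j,t_j)\subseteq G_{n+m}$ by the construction of the grid. The key point is that the restriction $u|_{G_n(x_j,t_j)}$ lies in $S(G_n(x_j,t_j),\om)$ — being a supersolution is a local property, so restricting the domain preserves membership — and hence $|\mathcal{P}(G_n(x_j,t_j);\Ga^{u|_{G_n(x_j,t_j)}})|\leq |G_n|\,\mu(G_n(x_j,t_j),\om)$ by the very definition \pref{mudef}.

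The crux is then the covering claim:
\begin{equation*}
\mathcal{P}(G_{n+m};\Ga^{u})\subseteq \bigcup_j \mathcal{P}(G_n(x_j,t_j);\Ga^{u\rst{G_n(x_j,t_j)}}),
\end{equation*}
up to a set of measure zero, or at least an inequality of Lebesgue measures in that direction. Here the subtlety is the time-ordering built into the parabolic subdifferential: a pair $(p,h)\in\mathcal{P}((x_0,t_0);\Ga^u)$ records the minimum of $u(x,s)-p\cdot x$ over \emph{all} $s\le t_0$ in $G_{n+m}$, whereas restricting to the subcube $G_n(x_j,t_j)$ containing $(x_0,t_0)$ only sees $s$ in that subcube's time interval. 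So I cannot naively say the pair survives restriction. The fix is to work with the contact-set description: by Lemma \ref{cs} (together with Lemma \ref{mereg}), $\mathcal{P}(G_{n+m};\Ga^u)$ is carried, up to null sets, by the contact set $\{u=\Ga^u\}$, and at a contact point $(x_0,t_0)$ the subdifferential $\mathcal{P}((x_0,t_0);\Ga^u)$ coincides with an honest (space-)subdifferential of $\Ga^u(\cdot,t_0)$ plus the correct intercept, which is a local object depending only on values of $\Ga^u$ at times $\le t_0$ near $(x_0,t_0)$. One then argues that near such a contact point $\Ga^u$ is controlled from below by the monotone envelope of $u$ restricted to the ambient subcube, so the relevant slopes and intercepts are also realized by $\mathcal{P}(G_n(x_j,t_j);\Ga^{u\rst{G_n(x_j,t_j)}})$; summing the measure bounds over $j$ and dividing by $|G_{n+m}| = \sum_j |G_n|$ gives \pref{decomp}. (Alternatively, one can invoke the area-formula identity from property (3) of $\mu$ applied on the contact set, which is local and hence additive over the tiling, avoiding the need to track subdifferentials pointwise.)

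The main obstacle I anticipate is exactly this reconciliation of the global time-minimum in $\mathcal{P}(G_{n+m};\cdot)$ with the subcube-local minimum: one must verify that restricting the time window does not \emph{enlarge} the set of attainable $(p,h)$ in an uncontrolled way, and that contact points of the global envelope remain contact points (or are dominated by contact points) of the local envelopes on the relevant subcubes. I expect this is handled cleanly by passing to the contact-set / area-formula picture, where the quantity $\int -\Ga^u_t\,\det D^2\Ga^u$ over $\{u=\Ga^u\}$ is manifestly additive over a measurable partition of $G_{n+m}$, and by checking that each piece is bounded by the corresponding $\mu(G_n(x_j,t_j),\om)|G_n|$ — the one genuinely parabolic subtlety being that the local monotone envelope $\Ga^{u\rst{G_n(x_j,t_j)}}$ may lie strictly below $\Ga^u$ on a subcube, but this only helps, since it means the local contact set and local Monge–Amp\`ere mass dominate the restriction of the global ones.
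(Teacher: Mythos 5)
Your approach is essentially identical to the paper's: tile $G_{n+m}$ by generation-$n$ grid cubes, observe that each restriction $u\rst{G_n(x,t)}$ remains in $S(G_n(x,t),\om)$, and compare subdifferential masses by passing to the contact set. The main ideas are right, but two of your intermediate claims are stated backwards, and one of them is load-bearing. First, $\mathcal{P}((x_0,t_0);\Ga^u)$ at a contact point is not a local object depending only on values of $\Ga^u$ near $(x_0,t_0)$: the slope $p$ is constrained by $u$ over all of $G_{n+m}\cap\{s\leq t_0\}$. What saves the argument is not locality but monotonicity --- the global constraint is \emph{stronger} than the subcube-local one, so the global subdifferential is contained in the local one. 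Second, the local envelope $\Ga^{u\rst{G_n(x_j,t_j)}}$ lies \emph{above} $\Ga^u$ on $G_n(x_j,t_j)$, not strictly below as you write: the smaller domain imposes fewer inequalities in the representation formula, hence a larger supremum. This direction is essential. At a contact point $(x_0,t_0)\in\{u=\Ga^u\}\cap G_n(x_j,t_j)$, a supporting plane $p\cdot y+h$ of $\Ga^u$ satisfies $p\cdot y+h\leq\Ga^u\leq\Ga^{u\rst{G_n(x_j,t_j)}}\leq u$ on $\{s\leq t_0\}$ with equality at $(x_0,t_0)$, which is exactly why $(p,h)$ also belongs to $\mathcal{P}\bigl((x_0,t_0);\Ga^{u\rst{G_n(x_j,t_j)}}\bigr)$ with the subcube as ambient domain; if the inequality between the two envelopes pointed the other way this step would fail. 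With these corrections your covering claim follows from Lemma~\ref{cs} as you describe, matching the paper's proof.
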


\begin{proof}
Let $u\in S(G_{n+m}, \om)$. By applying Lemma \ref{area}, we have that for each $(x,t)\in G_{n+m}$, 
\begin{equation*}
|\mathcal{P}(G_{n+m}\cap \partial_{p}G_{n}(x,t); \Ga^{u})|=0.
\end{equation*}
Therefore, 
\begin{align*}
|\mathcal{P}(G_{n+m}; \Ga^{u})|\leq\sum_{\left\{G=G_{n}(x,t)\subseteq G_{n+m}\right\}} |\mathcal{P}(G; \Ga^{u})|&=\int_{G_{n+m}}\frac{|\mathcal{P}(G_{n}(x,t); \Ga^{u})|}{|G_{n}|}~dxdt\\
&\leq \int_{G_{n+m}}\frac{|\mathcal{P}(G_{n}(x,t); \Ga^{\tilde{u}})|}{|G_{n}|}~dxdt
\end{align*}
where $\tilde{u}=u\rst{G_{n}(x,t)}$, for $(x,t)\in G_{n+m}$. By taking supremum of both sides, we have \pref{decomp}.
\end{proof}

Lemma \ref{lemdecomp} shows that $\EE[\mu(G_{n}, \om)]$ is non-increasing in $n$. We next show universal bounds for $\mu$. 

\begin{lem}\label{bnds}
There exists $\c[lower]=\c[lower](\la, \La, d)>0$ and $\c[upper]=\c[upper](\la, \La, d)>0$ so that for every $\om\in \Om$, $n\in \ZZ$, for every $M\in \mathbb{S}^{d}$, for every $\ell\in \RR$, 
\begin{equation}\label{bndest'}
\c[lower] \inf_{(x,t)\in G_{n}} (F(M, x,t, \om)-\ell)^{d+1}_{+}\leq \mu(G_{n}, \om, \ell, M)\leq \c[upper] \sup_{(x,t)\in G_{n}} (F(M, x,t, \om)-\ell)_{+}^{d+1}.
\end{equation}
\end{lem}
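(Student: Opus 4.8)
The plan is to establish the two inequalities in \pref{bndest'} separately: the lower bound by testing $\mu$ against an explicit supersolution, and the upper bound by combining uniform ellipticity with Lemma~\ref{area} after a rescaling. In both cases the dependence of the constants is exactly what the hypotheses permit: $\c[lower]$ will come out depending only on $\La$ and $d$, and $\c[upper]$ will be the constant $\c[contact]$ of Lemma~\ref{area}, which depends only on $\la$ and $d$.

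For the lower bound I would argue as follows. Write $a := \inf_{G_n}(F(M,x,t,\om)-\ell)$; if $a\leq 0$ the left-hand side of \pref{bndest'} vanishes and there is nothing to prove, so assume $a>0$. The idea is to use a paraboloid that genuinely decreases in time: set $A := a/(2\La d)$, $B := a/2$, and $w(x,t):=\tfrac{A}{2}|x|^2 - Bt$. Since $w$ is smooth with $w_t=-B$ and $D^2 w = AI$, uniform ellipticity \pref{f2'} (applied to $M+AI$ and $M$) gives, for every $(x,t)\in G_n$,
\begin{equation*}
w_t + F(M+D^2w, x, t, \om) \geq -B + F(M,x,t,\om) + \MM^-(AI) \geq -B + (\ell + a) - \La d A = \ell,
\end{equation*}
so $w \in S(G_n, \om, \ell, M)$. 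As $w$ is parabolically convex, $\Ga^{w}=w$ on $G_n$, so by the area-formula identity for $C^2$ parabolically convex functions recorded in Section~\ref{sec:mu},
\begin{equation*}
\mu(G_n, \om, \ell, M) \geq \frac{|\mathcal{P}(G_n; \Ga^{w})|}{|G_n|} = \aint_{G_n} (-w_t)\det D^2 w\,dxdt = B A^d = \frac{a^{d+1}}{2^{d+1}(\La d)^d},
\end{equation*}
which is the claimed estimate with $\c[lower] := 2^{-(d+1)}(\La d)^{-d}$.

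For the upper bound, let $b := \sup_{G_n}(F(M,x,t,\om)-\ell)_+$ and take any $u\in S(G_n,\om,\ell,M)$. Uniform ellipticity \pref{f2'} gives $F(M+D^2u,\cdot)\leq F(M,\cdot)+\MM^+(D^2u)$ in the viscosity sense, so $u$ is a viscosity supersolution of $u_t + \MM^+(D^2u) \geq \ell - F(M,x,t,\om) \geq -b$ in $G_n$. For $\ve>0$, $v:=u/(b+\ve)$ then satisfies $v_t + \MM^+(D^2v) \geq -b/(b+\ve) \geq -1$ in $G_n$, so Lemma~\ref{area} gives $|\mathcal{P}(G_n; \Ga^{v})| \leq \c[contact]|G_n|$. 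Since $\Ga^{u}=(b+\ve)\Ga^{v}$ and hence $\mathcal{P}(G_n;\Ga^{u})=(b+\ve)\mathcal{P}(G_n;\Ga^{v})$ as a dilation of sets in $\RR^{d+1}$, we get $|\mathcal{P}(G_n;\Ga^{u})| = (b+\ve)^{d+1}|\mathcal{P}(G_n;\Ga^{v})| \leq \c[contact](b+\ve)^{d+1}|G_n|$. Letting $\ve\downarrow 0$ and taking the supremum over $u\in S(G_n,\om,\ell,M)$ yields $\mu(G_n,\om,\ell,M)\leq \c[contact]\,b^{d+1}$, which is \pref{bndest'} with $\c[upper]:=\c[contact]$; the $\ve$-regularization also handles the degenerate case $b=0$, where it forces $\mu(G_n,\om,\ell,M)=0$.

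The main subtlety is the construction for the lower bound: one is tempted to test against the purely spatial paraboloid $\tfrac{A}{2}|x|^2$, which is parabolically convex and satisfies the supersolution inequality, but it is constant in time, so its parabolic subdifferential is a $d$-dimensional graph over the slopes and has $(d+1)$-dimensional Lebesgue measure zero. The time slope $-Bt$ is exactly what inflates $\mathcal{P}(G_n;\Ga^{w})$ to positive measure, and it must be chosen in the right proportion to $A$ so that the ellipticity loss $\La d A$ from the second-order term does not destroy the supersolution inequality — this is what pins down the choice $A=a/(2\La d)$, $B=a/2$. A minor technical point is that Lemma~\ref{area} is stated for open domains whereas $G_n$ is only half-open; as in the proof of Lemma~\ref{lemdecomp}, the contribution of $\partial_p G_n$ to $|\mathcal{P}(G_n;\Ga^{u})|$ is negligible and may be ignored.
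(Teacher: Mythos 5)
Your proof is correct and takes essentially the same approach as the paper's: the lower bound is obtained by testing $\mu$ against the explicit parabolically convex paraboloid $\tfrac{A}{2}|x|^{2}-Bt$ (the paper uses $\tfrac{\eta}{4d\La}|x|^{2}-\tfrac{\eta}{4}t$, so only the constants differ), and the upper bound follows from Lemma~\ref{area} after rescaling by $\sup_{G_n}(F(M,\cdot,\cdot,\om)-\ell)_{+}$. The paper dispatches the upper bound with the remark that it ``holds by scaling and rearranging,'' whereas you spell out the rescaling of $\Gamma^{u}$ and the resulting dilation of $\mathcal{P}(G_n;\Gamma^{u})$, but the content is the same.
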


\begin{proof}
We fix $M\in \mathbb{S}^{d}$, and without loss of generality, we assume that $\ell=0$. By Lemma \ref{area}, the right inequality holds by scaling and rearranging. To prove the left inequality, we note that letting 
\begin{align*}
&\eta:=\inf_{(x,t)\in G_{n}} (F(M, x,t, \om))_{+}\\
&\vp(x,t):=-\frac{\eta}{4}t+\frac{\eta}{4d\La}|x|^{2},
\end{align*}
for each $(x,t)\in G_{n}$, 
\begin{align*}
\vp_{t}+F(M+D^{2}\vp, x, t, \om)&\geq \vp_{t}+\MM^{-}(D^{2}\vp)+F(M, x,t, \om)\\
&=-\frac{\eta}{4}-\frac{\eta}{2}+F(M, x,t, \om)\geq 0.
\end{align*}
Therefore, $\vp\in S(G_{n}, \om, M)$, and hence 
\begin{equation*}
\mu(G_{n}, \om, M)\geq \frac{|\mathcal{P}(G_{n}; \vp)|}{|G_{n}|}=\frac{1}{|G_{n}|}\int -\vp_{t} \det D^{2}\vp=\c[lower] \eta^{d+1}.
\end{equation*}
\end{proof}
In particular, we mention that \pref{bndest} implies
\begin{equation}\label{bndest}
\c[lower] \inf_{(x,t)\in G_{n}} (F(M, x,t, \om)-\ell)^{d+1}_{+}\leq \mu(G_{n}, \om, \ell, M)\leq \c[upper] [K_{0}(1+|M|)-\ell]_{+}^{d+1}.
\end{equation}

Using the previous two lemmas, we establish

\begin{cor}\label{setok}$\displaystyle \lim_{n\rightarrow\infty} \mu(G_{n}, \om)$ exists almost surely. 
\end{cor}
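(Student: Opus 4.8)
The plan is to deduce Corollary \ref{setok} from the subadditive ergodic theorem of Akcoglu and Krengel \cite{akerg}, applied to the set function
\begin{equation*}
\Phi(G_n(x,t),\om) := \sup_{u\in S(G_n(x,t),\om)}|\mathcal{P}(G_n(x,t);\Ga^u)| = 3^{2n}\,|G_n|\cdot\text{(nothing)}\ \ \text{, i.e. }\ \mu(G_n(x,t),\om) = \tfrac{1}{|G_n|}\Phi(G_n(x,t),\om).
\end{equation*}
(That is, $\Phi$ is the unnormalized quantity whose normalization by volume is $\mu$.) First I would verify the hypotheses of the subadditive ergodic theorem for $\Phi$ along the sequence of grid cubes $G_n$: (i) \emph{stationarity/covariance} — $\Phi(G_n(x,t),\om) = \Phi(G_n,\tau_{(x,t)}\om)$ for $(x,t)\in\ZZ^{d+1}$ (with the appropriate integer lattice adapted to the parabolic grid), which follows from assumption \pref{foops} applied to the definition of $S(Q,\om)$ and the fact that $\mathcal{P}$ and $\Ga^u$ are defined by translation-covariant geometric operations; (ii) \emph{subadditivity} — this is exactly Lemma \ref{lemdecomp}, which gives $\Phi(G_{n+m},\om) \leq \sum_{G=G_n(x,t)\subseteq G_{n+m}}\Phi(G,\om)$ after multiplying \pref{decomp} through by $|G_{n+m}|$ and using $|G_{n+m}| = 3^{m(d+2)}|G_n|$; (iii) \emph{integrability / uniform $L^1$ bound} — this is the universal upper bound in Lemma \ref{bnds}, specifically $\mu(G_n,\om)\leq \c[upper]K_0^{d+1}$ deterministically (take $M=0$, $\ell=0$ in \pref{bndest}), so $\EE[\mu(G_n,\om)]$ is bounded uniformly in $n$; (iv) \emph{ergodicity of the action} — the $\ZZ^{d+1}$-action $\tau$ is ergodic, which follows from the finite range of dependence assumption \pref{iid} (finite range of dependence implies ergodicity, indeed mixing, of the shift action).

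Given these four ingredients, the Akcoglu–Krengel theorem yields that $\frac{1}{|G_n|}\Phi(G_n,\om) = \mu(G_n,\om)$ converges almost surely (and in $L^1$) to a constant, namely $\inf_n\EE[\mu(G_n,\om)]$, which is finite and nonnegative by Lemma \ref{lemdecomp} (monotonicity of $\EE[\mu(G_n,\cdot)]$) together with the upper bound from Lemma \ref{bnds}. I would note explicitly, as the excerpt already does, that the Akcoglu–Krengel theorem applies verbatim to cubes with parabolic scaling since the parabolic grid $\{G_n(x,t)\}$ is still a genuine tiling of $\RR^{d+1}$ by a lattice of translates, and the only structural facts used are subadditivity over this tiling and covariance under the lattice action; the anisotropic shape of the cubes is immaterial.

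The main obstacle — or at least the only point requiring care beyond bookkeeping — is checking that the measurability and covariance hypotheses hold as stated for $\Phi$. The quantity $\Phi(G_n(x,t),\om)$ is defined as a supremum over the (infinite-dimensional) solution set $S(G_n(x,t),\om)$ of a Lebesgue measure of a set of contact slopes; one must confirm this is an $\mathcal{F}$-measurable function of $\om$ and that the sup is not altered by passing to a countable dense subfamily, so that standard measure-theoretic machinery applies. Covariance under $\tau_{(x,t)}$ for integer $(x,t)$ reduces to the identity $S(G_n(x,t),\om) = \{\,u(\cdot-x,\cdot-t) : u\in S(G_n,\tau_{(x,t)}\om)\,\}$, which is immediate from \pref{foops} and the translation-invariance of the viscosity differential inequality, together with the corresponding translation-covariance of $(u,Q)\mapsto\Ga^u$ and of $\mathcal{P}(Q;\cdot)$. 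I would also remark that ergodicity here is a genuine consequence of \pref{iid}: any shift-invariant event lies in $\mathcal{B}(A)$ for every $A$ up to null sets, hence is independent of itself, hence has probability $0$ or $1$ — this is the standard argument, and I would cite it rather than reprove it. Once these verifications are in place the corollary is an immediate application; no further estimates are needed, and in particular the rate of convergence is deferred to Section \ref{sec:decaym}.
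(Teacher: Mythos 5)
Your proposal is correct and follows essentially the same route as the paper: apply the Akcoglu--Krengel subadditive ergodic theorem to $R(G_n,\om)=\sup_{u\in S(G_n,\om)}|\mathcal{P}(G_n;\Ga^u)|$, verifying subadditivity via Lemma \ref{lemdecomp}, boundedness via Lemma \ref{bnds}, and stationarity via \pref{foops}. You add explicit discussion of measurability and of the derivation of ergodicity from \pref{f1'}, which the paper's proof elides but implicitly relies upon; these are sensible points of care, not deviations in approach.
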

{
\begin{proof}
We apply the subadditive ergodic theorem to the quantity 
\begin{equation*}
R(G_{n}, \om):=\sup_{u\in S(G_{n}, \om)}|\mathcal{P}(G_{n}; \Ga^{u})|.
\end{equation*}
We note by the stationarity of $F(\cdot, \cdot, \cdot, \om)$, it follows that $R(\cdot, \om)$ is stationary. By Lemma \ref{lemdecomp}, Lemma \ref{bnds}, and \pref{f3'}, $R(\cdot, \om)$ is subadditive on parabolic cubes and bounded almost surely. An application of the subadditive ergodic theorem yields the claim. 
\end{proof}
In light of the ergodicity assumption, the limit is a constant almost surely. We note that if $\lim_{n\rightarrow\infty} \mu(G_{n}(x,t), \om)=0$, then by \pref{muabpineq}, we obtain a type of comparison principle in the limit. In the next section, we will show that if the limit is strictly positive, then we obtain control of the growth of an optimizing supersolution. }

\section{Strict Convexity of Quasi-Maximizers}\label{sec:pma}
The results in this section are completely deterministic, and we suppress all dependencies on the random parameter $\om$. We show that $|\mathcal{P}(Q; \Ga^{u})|$ yields geometric information about the function $u\in S(Q)$. More specifically, for some $n\leq 0,$ if $\frac{|\mathcal{P}(G_{n}(x,t); \Ga^{u})|}{|G_{n}|}\approx 1$ for all $(x,t)\in G_{0}$, then the optimizing supersolution for $\mu(G_{0})$ is strictly convex. In particular, up to an affine transformation, the optimizing supersolution bends upwards on $\partial_{p}G_{0}$. 

Formally, if $\vp$ is parabolically convex with classical derivatives, then for $n$ sufficiently small, by the Lebesgue differentiation theorem,
\begin{equation*}
-\vp_{t}(x,t)\det D^{2}\vp(x,t)\approx \aint_{G_{n}(x,t)} -\vp_{s} \det D^{2}\vp\, dyds=\frac{|\mathcal{P}(G_{n}(x,t); \vp)|}{|G_{n}|}.
\end{equation*}
Therefore, if $\frac{|\mathcal{P}(G_{n}(x,t); \vp)|}{|G_{n}|}\approx 1$ for all $(x,t)$, this is related to solving the parabolic Monge-Amp\`ere equation $-\vp_{t}\det D^{2}\vp=1$. This idea originated in \cite{asellip}, where given an equivalent measure condition for the elliptic subdifferential of the convex envelope, the authors conclude that the optimizing supersolution is strictly convex.

In this article, we first utilize the regularity properties of $u\in S(G_{0})$ to show that the time derivatives and Hessian of $w=\Ga^{u}$ are uniformly bounded above almost everywhere. In particular, this bound only depends on the ellipticity constants and dimension. Using the structure of \pref{genma}, we then obtain that the time derivative and Hessian are also strictly positive almost everywhere, which allows us to conclude that the solution must be strictly convex. We mention that this approach can also be applied to the elliptic setting of \cite{asellip} to produce an alternative argument. 

We first show that by using that $u\in S(G_{0})$, the monotone envelope $\Ga^{u}$ {satisfies} a uniform upper bound on the time derivative and Hessian at its contact points. Recall that by Lemma \ref{mereg}, $\Ga^{u}$ is Lipschitz continuous in time and $C^{1,1}$ in space. Therefore, we may represent $(p,h)\in\mathcal{P}((x_{0}, t_{0}); \Ga^{u})$ by $(D\Ga^{u}(x_{0}, t_{0}), u(x_{0}, t_{0})-D\Ga^{u}(x_{0}, t_{0})\cdot x_{0})\in\mathcal{P}((x_{0}, t_{0}); \Ga^{u})$. 

\begin{lem}\label{lem:upbnds}
Let $u\in S(G_{0})$, and suppose 
\begin{equation}\label{mu_up}
\frac{|\mathcal{P}(G_{-2}(x,t); \Ga^{u})|}{|G_{-2}|}\leq 2\quad\text{for all}\quad (x,t)\in G_{0}.
\end{equation}
There exists $\ga=\ga(\la, \La, d)$, such that for all $(x_{0}, t_{0})\in Q_{1/4}(0,1)\cap\left\{u=\Ga^{u}\right\}$, we have that for all $(y,s)\in Q_{1/4}(x_{0}, t_{0})$,
\begin{equation}\label{upest}
\Ga^{u}\left(y, s\right)\leq \Ga^{u}(x_{0}, t_{0})+D\Ga^{u}(x_{0}, t_{0})\cdot (y-x_{0})+\ga.
\end{equation}
\end{lem}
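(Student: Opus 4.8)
The goal of Lemma~\ref{lem:upbnds} is to show that the monotone envelope $w=\Ga^u$ grows at most quadratically (with a universal constant $\ga$) near any of its contact points in the central region $Q_{1/4}(0,1)$. Equivalently, we want a uniform upper bound on how far $w$ can rise above its tangent plane in space-time on a small parabolic cylinder around a contact point. I would prove this by contradiction-and-compactness, exploiting the ABP-type estimate Lemma~\ref{muabp} together with the measure hypothesis \pref{mu_up}.

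\textbf{Step 1: Reduce to a statement about the plane.} Fix a contact point $(x_0,t_0)\in Q_{1/4}(0,1)\cap\{u=\Ga^u\}$, and subtract the supporting plane $p\cdot(y-x_0)+h$ at $(x_0,t_0)$, where $(p,h)\in\mathcal{P}((x_0,t_0);w)$ is the one given by $p=D\Ga^u(x_0,t_0)$. Since subtracting an affine function of $x$ from $u$ changes neither the equation (the Pucci operators and the structure $S(G_0)$ are invariant under adding affine functions in $x$, which do not affect $D^2u$ or $u_t$) nor the quantity $\mathcal{P}(Q;\Ga^u)$ in measure (by the scaling/translation properties noted after \pref{mudef}), we may assume $w\ge 0$ on $\{s\le t_0\}$, $w(x_0,t_0)=0$, $p=0$, $h=0$. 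We must then show $w\le\ga$ on $Q_{1/4}(x_0,t_0)$. Monotonicity in time of $w$ means the supremum of $w$ over $Q_{1/4}(x_0,t_0)$ is attained on the top time-slice, so it suffices to bound $\sup_{B_{1/4}(x_0)} w(\cdot,t_0)$, plus control at earlier times which is automatic by monotonicity once $w\ge0$; actually the relevant bad direction is the latest time, so we really just need the spatial bound at $t=t_0$.

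\textbf{Step 2: Turn an upper growth bound into a lower bound on $|\mathcal{P}|$.} This is the crux. Suppose $w(y_1,t_0)=:A$ is large for some $y_1\in B_{1/4}(x_0)$. Because $w(\cdot,t_0)$ is convex, nonnegative, vanishes at $x_0$, and is $\ge A$ at $y_1$ with $|y_1-x_0|\le 1/4$, the convex function $w(\cdot,t_0)$ generates a large elliptic subdifferential: $\partial w[t_0](B_{1/2}(x_0))$ contains a ball of radius $\gtrsim A$ (this is the standard ``cone'' estimate, as in the proof of Lemma~\ref{muabp} but run in the other direction — a convex function that is $0$ at the center and $\ge A$ a distance $\sim 1$ away has subgradients filling out a set of size $\sim A^d$). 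By property (1) listed after \pref{mudef} (the projection of $\mathcal{P}((x_0,t);u)$ into $\RR^d$ is the elliptic subdifferential), and the fact that the $h$-fiber over each such $p$ has positive length (as in the continuity argument in Lemma~\ref{muabp}, using that $w\ge0$ on $\{s\le t_0\}$ forces the relevant minima into the region), we get
\begin{equation*}
|\mathcal{P}(G_{-2}(x_0,t_0);\Ga^u)|\;\gtrsim\; A^{d+1}\cdot(\text{something}),
\end{equation*}
while \pref{mu_up} caps this by $2|G_{-2}|\sim 3^{-2(d+2)}$. This forces $A\le\ga(\la,\La,d)$. The main obstacle is making the fiber-length bound quantitative and uniform: one needs that at a contact point one can move $p$ in a ball of radius $\sim A$ \emph{and} still keep $(p,h)$ realized by a space-time minimum inside $G_{-2}(x_0,t_0)$ for a fixed positive range of $h$. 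Here one uses that $G_{-2}(x_0,t_0)$ contains a parabolic cylinder of radius $\sim 3^{-2}$ around $(x_0,t_0)$ (since $(x_0,t_0)$ ranges over $Q_{1/4}(0,1)$ and the grid boxes $G_{-2}$ have side $\sim 3^{-2}$, up to checking $(x_0,t_0)$ is not too close to a grid boundary — if it is, enlarge to finitely many adjacent boxes and use \pref{mu_up} on each, or equivalently invoke Lemma~\ref{lemdecomp}-type additivity), plus the Lipschitz regularity of $w$ in $t$ from Lemma~\ref{mereg} to guarantee the $h$-intervals have length bounded below.

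\textbf{Step 3: Assemble.} Combining Steps 1--2: for any contact point in $Q_{1/4}(0,1)$, after normalization $\sup_{B_{1/4}(x_0)}w(\cdot,t_0)\le\ga$; undoing the normalization gives exactly \pref{upest} on $Q_{1/4}(x_0,t_0)$, using monotonicity in $t$ to pass from the single slice $t=t_0$ to all $s\in(t_0-1/16,t_0]$. The constant $\ga$ depends only on $\la,\La,d$ because everything in sight — the cone-volume lower bound, the fiber-length bound via Lemma~\ref{mereg}, and the cap $2|G_{-2}|$ — depends only on those. Alternatively to the direct cone computation, one can run a clean compactness argument: if the lemma failed there would be a sequence $u_k\in S(G_0)$ satisfying \pref{mu_up} with contact points $(x_0^k,t_0^k)\to(x_0^\infty,t_0^\infty)$ and $\ga_k\to\infty$; normalizing $w_k=\Ga^{u_k}$ as in Step~1 and using the uniform Lipschitz/$C^{1,1}$ bounds of Lemma~\ref{mereg} to extract a limit, the limit $w_\infty$ would be parabolically convex, nonnegative, vanish at the contact point, yet be unbounded on a fixed small cylinder — contradicting the $|\mathcal{P}|$-cap which passes to the limit by lower semicontinuity of the (nonnegative) measure $|\mathcal{P}(\cdot;w_k)|$ under this convergence. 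I expect the compactness route to be the cleaner write-up, with Step~2's quantitative cone estimate as the fallback if one wants explicit constants; either way, the delicate point is controlling the $h$-fibers near the grid boundary, handled by passing to a fixed finite union of adjacent $G_{-2}$ cubes.
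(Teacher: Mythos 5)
There are several genuine gaps in your argument, and the method you propose is different from (and, I believe, does not succeed in replacing) the one the paper uses.

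\textbf{The time reduction in Step 1 is reversed.} The monotone envelope $\Ga^u$ is \emph{non-increasing} in $t$, so over $Q_{1/4}(x_0,t_0)=B_{1/4}(x_0)\times(t_0-\tfrac1{16},t_0]$ the supremum is attained on the \emph{earliest} slice $t=t_0-\tfrac1{16}$, not on the top slice $t=t_0$. The paper correctly reduces to bounding $\Ga^u(\cdot,\,t_0-\tfrac1{16})$. Your reduction ``so it suffices to bound $\sup_{B_{1/4}(x_0)}w(\cdot,t_0)$'' discards precisely the content of the lemma: once the early slice is bounded, the slice $t=t_0$ is bounded for free by monotonicity, not the other way around.

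\textbf{The cone argument in Step 2 has no anchor at the correct slice.} The reason the direct ``convex function with a small value at the center and a large value at distance $\sim 1$ has a large subdifferential'' estimate works at $t=t_0$ is that after normalization $w(x_0,t_0)=0$. But $w(x_0,t_0-\tfrac1{16})$ is only known to be $\ge 0$, and it could perfectly well be comparable to $\sup_{B_{1/4}} w(\cdot,t_0-\tfrac1{16})$; the spatial oscillation you need for the subdifferential cone simply isn't available on the relevant slice. The paper handles this via convexity in a more refined way: it uses that the maximizer $\bar y$ of $w(\cdot,\tfrac{15}{16})$ over $\overline{B_{1/4}}$ lies on $\partial B_{1/4}$ and, since $w$ is convex and $\bar y$ is a max over the ball, one gets $w(z,\tfrac{15}{16})\ge\ga$ on the entire half-ball $\Theta=\{z\in B_{1/2}: z\cdot\bar y\ge|\bar y|^2\}$. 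This requires no knowledge of $w(x_0,t_0-\tfrac1{16})$.

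\textbf{The compactness fallback defeats the purpose of the lemma.} Your alternative uses the uniform Lipschitz/$C^{1,1}$ bounds of Lemma~\ref{mereg} to extract a limit. Those bounds require $u_t+\MM^+(D^2u)\ge -1$, and for $u\in S(G_0)$ this normalization costs a factor of $K_0$; the resulting $\ga$ would depend on $K_0$. The paper explicitly emphasizes (in the discussion immediately after the lemma) that the whole point of Lemma~\ref{lem:upbnds} is to obtain a bound \emph{independent of $K_0$} — a $K_0$-dependent bound is already available from Lemma~\ref{mereg} and is useless for the scale-invariant analysis that follows. Also, your fiber-length estimate (``the $h$-fiber over each such $p$ has positive length'') is exactly the hard step, and invoking Lemma~\ref{mereg} for it reintroduces $K_0$.

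\textbf{What the paper actually does.} Having shown $w\ge\ga$ on the half-ball $\Theta\subset B_{1/2}\times\{t=\tfrac{15}{16}\}$, the paper constructs a \emph{universal barrier} $\vp\in C^2(\mathcal{Q})$, $\mathcal{Q}=B_{1/2}\times(\tfrac{15}{16},1]$, solving $\vp_t+\MM^-(D^2\vp)\ge 0$ with boundary data $\vp\ge-\chi_\Theta$ and $\min\vp(\cdot,1)\le -c$ (via Evans--Krylov plus the strong maximum principle; $c$ universal because $\Theta$ is always a half-ball). Then $u+\ga\vp$ is an $F$-supersolution, is $\ge 0$ on $\partial_p\mathcal{Q}$, and dips to $\le -c\ga$ inside, so Lemma~\ref{muabp} gives $|\mathcal{P}(\mathcal{Q})|\ge c\ga^{d+1}$. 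Summing (\ref{mu_up}) over the $\sim 3^{2(d+2)}$ boxes $G_{-2}$ covering $\mathcal{Q}\subset G_0$ caps this by $2|G_0|$, yielding a contradiction for $\ga$ large. The barrier is what converts ``$w$ is big on a half-ball at the early time'' into ``a supersolution has a quantified interior drop,'' sidestepping the fiber-length issue entirely and keeping all constants $K_0$-free.
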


\begin{proof}
We point out that by the monotonicity of $\Ga^{u}$, it is enough if we can show that for all $y\in B_{1/4}(x_{0})$ where $u(x_{0}, t_{0})=\Ga^{u}(x_{0}, t_{0})$, 
\begin{equation}
\Ga^{u}\left(y, t_{0}-\frac{1}{16}\right)\leq \Ga^{u}(x_{0}, t_{0})+D\Ga^{u}(x_{0}, t_{0})\cdot (y-x_{0})+\ga.
\end{equation}
We proceed by contradiction.  Let $w:=\Ga^{u}$ be defined in $G_{0}$. Assume that there exists a point $(x_{0}, t_{0})$ so that 
\begin{equation}\label{contra}
\sup_{B_{1/4}(x_{0}, t_{0})} w\left(\cdot, t_{0}-\frac{1}{16}\right)>  w(x_{0}, t_{0})+Dw\cdot (y-x_{0})+\ga,
\end{equation}
with $\ga$ to be chosen. Without loss of generality, by adding an affine function, we may assume that $(x_{0}, t_{0})=(0,1)$, and $\Ga^{u}(x_{0}, t_{0})=D\Ga^{u}(x_{0}, t_{0})=0$. 

Let $\overline{y}\in \overline{B_{1/4}}$ so that 
\begin{equation*}
w\left(\overline{y}, \frac{15}{16}\right):=\max_{\overline{B_{1/4}}} w\left(\cdot, \frac{15}{16}\right).
\end{equation*}
By \pref{contra}, 
\begin{equation*}
w\left(\overline{y}, \frac{15}{16}\right)> \ga.
\end{equation*}
 
Since $w\left(\cdot, \frac{15}{16}\right)$ is convex and using the definition of $\overline{y}$, this implies that 
\begin{equation*}
w\left(z, \frac{15}{16}\right)> \ga\quad\text{for all}\, z\,\, \text{such that}\,\, z\cdot \overline{y}\geq |\overline{y}|^{2}.
\end{equation*}
In particular, let $\Theta:=\left\{\left(z,\frac{15}{16}\right): z\in B_{1/2}, z\cdot \overline{y}\geq |\overline{y}|^{2}\right\}$. 

Let $\mathcal{Q}:=B_{1/2}\times \left(\frac{15}{16}, 1\right]$. We claim there exists a test function $\vp\in C^{2}(\mathcal{Q})$ which satisfies
\begin{equation}\label{bump}
\begin{cases}
\vp_{t}+\MM^{-}(D^{2}\vp)\geq 0 & \text{in}\quad \mathcal{Q},\\
\vp\geq -\chi_{\Theta}& \text{on}\quad \partial_{p}\mathcal{Q},
\end{cases}
\end{equation}
and $\min \vp(\cdot, 1)\leq -c$ for some universal constant $c$. First, by approximating $-\chi_{\Theta}$ by a smooth function from above and applying the Evans-Krylov theorem \cite{evans_krylov}, there exists a supersolution which is $C^{2}$ satisfying the boundary conditions of \pref{bump}. By the strong maximum principle, there exists a non-constant solution so that $\min \vp(\cdot,1)\leq -c$. Moreover, by compactness, this $c$ can be chosen universally for all $(x_{0}, t_{0})\in Q_{1/4}(0,1)$ by a standard covering argument. This implies that $u+\ga\vp$ satisfies
%However, for each $x_{0}\in B_{1/4}$, by covering $B_{1/4}$ with a finite collection balls $\mathcal{I}$ of size $B_{1/16}$, each with constant $c_{i}$ for $i\in \mathcal{I}$, we may consider $-c:=\max_{\mathcal{I}} \left\{-c_{i}\right\}$. 
\begin{equation*}
\begin{cases}
(u+\ga\vp)_{t}+F(D^{2}(u+\ga\vp), x, t)\geq 0 & \text{in}\quad \mathcal{Q},\\
u+\ga\vp\geq 0 & \text{on}\quad \partial_{p}\mathcal{Q},\\
\min_{\mathcal{Q}} (u+\ga\vp)(\cdot,1)\leq -c\ga.
\end{cases}
\end{equation*}
By a similar estimate as in Lemma \ref{muabp}, this implies that $|\mathcal{P}(\mathcal{Q})|\geq c\ga^{d+1}$. Therefore, if we consider covering $\mathcal{Q}$ with a collection of $G_{-2}(x,t)\subseteq G_{0}$, then 
\begin{equation*}
c\ga^{d+1}\leq \sum_{G_{-2}(x,t)\subset G_{0}} |\mathcal{P}(G_{-2}(x,t))|\leq 2|G_{0}|.\end{equation*}
Choosing $\ga$ sufficiently large, depending only on $\la, \La, d$, we obtain a contradiction. Therefore, \pref{upest} holds. 
\end{proof}

By rescaling Lemma \ref{lem:upbnds}, we actually have that if for all $(x,t)\in G_{0}$, 
\begin{equation*}
\frac{|\mathcal{P}(G_{n}(x,t); u)|}{|G_{n}|}\leq 2,
\end{equation*}
and $3^{n}\leq \frac{r}{4}$, then for any point such that $u(x_{0}, t_{0})=\Ga^{u}(x_{0}, t_{0})$, for all $(y,s)\in Q_{r}(x_{0}, t_{0})$, 
 \begin{equation}\label{scaled_upest}
\Ga^{u}(y, s)\leq \Ga^{u}(x_{0}, t_{0})+D\Ga^{u}(x_{0}, t_{0})\cdot (y-x_{0})+\ga r^{2}.\end{equation} 
By sending $r\rightarrow 0$, this implies that $\Ga^{u}_{t}\leq \ga$, and $D^{2}\Ga^{u}\leq \ga Id$ at all contact points where $u=\Ga^{u}$. By the construction of the monotone envelope (in particular, Lemma \ref{repform}), this implies that $\Ga^{u}_{t}\leq \ga$ and $D^{2}\Ga^{u}\leq \ga Id$ everywhere in $G_{0}$. The proof is identical to the proof of Lemma \ref{mereg} which can be found in \cite{cyriluis}. We choose to omit it since it follows verbatim. 

{We highlight that unlike Lemma \ref{mereg}, the upper bound on the time derivatives and Hessian of $\Ga^{u}$ will be \emph{independent of $K_{0}$}. An observation of \cite{asellip} is that it does not seem feasible to obtain an algebraic rate if these upper bounds depend on $K_{0}$. Recall that our goal is to establish an estimate which controls supersolutions from the other side of Lemma \ref{muabp}. Since we plan on performing quantitative analysis, it is important that our estimate is \emph{scale-invariant}. If our estimate depended on $K_{0}$, then by \pref{f3'}, the estimate would depend upon the scaling.}  In general, the upper bounds on the time derivative and the Hessian are controlled by the quantity $\mu(G_{n}(x,t))$. In light of \pref{mu_up}, this is enough to conclude that $\ga$ is independent of $K_{0}$. 

We next show that these upper bounds are actually enough to conclude strict convexity.  

\begin{lem}\label{lem:lbnds}
There exists $\c[lower']=\c[lower'](\la, \La, d)>0$ so that for every $\ve>0$, there exists $n_{1}=n_{1}(\ve, d)<0$ such that if $u\in S(G_{0})$ and $n\leq n_{1}$ satisfies
\begin{equation}\label{mu_sandwich}
1\leq \frac{|\mathcal{P}(G_{n}(x,t); \Ga^{u})|}{|G_{n}|}\leq 2\quad\text{for all}\quad (x,t)\in G_{0},
\end{equation}
then for all $(x_{0}, t_{0})\in Q_{1/4}(0,1)\cap \left\{u=\Ga^{u}\right\}$, for all $(y,s)\in Q_{1/4}(x_{0}, t_{0})$, 
\begin{equation}\label{lbfin}
\Ga^{u}(y,s)\geq \Ga^{u}(x_{0}, t_{0})+D\Ga^{u}(x_{0}, t_{0})\cdot (y-x_{0})+\c[lower'] (t_{0}-s+|y-x_{0}|^{2})-\ve.
\end{equation}
\end{lem}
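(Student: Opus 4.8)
The plan is to argue by contradiction via a compactness argument, which plays here the role of John's Lemma in the elliptic theory of \cite{asellip}. Let $\ga=\ga(\la,\La,d)$ be the constant of Lemma~\ref{lem:upbnds}. If the conclusion fails for some $\ve>0$, then, keeping $\c[lower']$ a free parameter to be fixed from $\la,\La,d$, for each $k\in\NN$ there are $n_k\leq -k$, a function $u_k\in S(G_0)$ satisfying \pref{mu_sandwich} at scale $n_k$, a contact point $(x_0^k,t_0^k)\in Q_{1/4}(0,1)\cap\{u_k=\Ga^{u_k}\}$, and a point $(y_k,s_k)\in Q_{1/4}(x_0^k,t_0^k)$ at which \pref{lbfin} fails. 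Subtracting from $u_k$ the affine function $x\mapsto\Ga^{u_k}(x_0^k,t_0^k)+D\Ga^{u_k}(x_0^k,t_0^k)\cdot(x-x_0^k)$ — which alters neither membership in $S(G_0)$, nor $\mathcal{P}$, nor \pref{mu_sandwich} — we may assume $\Ga^{u_k}(x_0^k,t_0^k)=0$ and $D\Ga^{u_k}(x_0^k,t_0^k)=0$. For $k$ large, the upper bound in \pref{mu_sandwich} together with the discussion following Lemma~\ref{lem:upbnds} gives the $K_0$-free bounds $|\partial_t\Ga^{u_k}|\leq\ga$ and $0\leq D^2\Ga^{u_k}\leq\ga I$ throughout the interior of $G_0$; combined with the normalization, these make $\{\Ga^{u_k}\}$ equi-bounded and locally equi-Lipschitz in the interior of $G_0$. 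Passing to a subsequence, $\Ga^{u_k}\to w$ locally uniformly, where $w$ is parabolically convex with $|w_t|\leq\ga$ and $0\leq D^2w\leq\ga I$; moreover $(x_0^k,t_0^k)\to(\bar x,\bar t)$ and $(y_k,s_k)\to(\bar y,\bar s)$ with $\bar s\leq\bar t$, and — since the convex functions $\Ga^{u_k}(\cdot,t_0^k)$ converge uniformly to $w(\cdot,\bar t)$ — also $D\Ga^{u_k}(x_0^k,t_0^k)\to Dw(\bar x,\bar t)$; by the normalization $w(\bar x,\bar t)=0$ and $Dw(\bar x,\bar t)=0$.

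The heart of the proof is to show that $w$ satisfies the parabolic Monge--Amp\`ere inequality $-w_t\det D^2w\geq1$ almost everywhere. Since $\Ga^{u_k}\in C^{1,1}_x\cap\mathrm{Lip}_t$ (Lemma~\ref{mereg}), its parabolic Monge--Amp\`ere measure $M\Ga^{u_k}:=\big(-\partial_t\Ga^{u_k}\,\det D^2\Ga^{u_k}\big)\,dx\,dt$ is well defined, and by the area formula for the Lipschitz map $\mathcal{P}[\Ga^{u_k}]$ it assigns to each cube $Q$ a mass $\geq|\mathcal{P}(Q;\Ga^{u_k})|$. Fix a grid cube $G_m(x,t)\subseteq G_0$. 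For every $k$ with $n_k\leq m$, partitioning $G_m(x,t)$ into scale-$n_k$ subcubes and using the lower bound in \pref{mu_sandwich},
\[
M\Ga^{u_k}(G_m(x,t))=\sum_{G_{n_k}(x',t')\subseteq G_m(x,t)}M\Ga^{u_k}(G_{n_k}(x',t'))\;\geq\;\sum_{G_{n_k}(x',t')\subseteq G_m(x,t)}\big|\mathcal{P}(G_{n_k}(x',t');\Ga^{u_k})\big|\;\geq\;|G_m(x,t)|.
\]
By the weak-$*$ stability of the parabolic Monge--Amp\`ere measure under locally uniform convergence of parabolically convex functions (the parabolic analogue of the elliptic statement; cf. \cite{krylovconv,tso,guthuang1,guthuang2,cyriluis}), $M\Ga^{u_k}\rightharpoonup Mw$, and since $\partial G_m(x,t)$ is Lebesgue-null while $Mw$ is absolutely continuous (as $w\in C^{1,1}_x\cap\mathrm{Lip}_t$ too), we get $Mw(G_m(x,t))=\lim_kM\Ga^{u_k}(G_m(x,t))\geq|G_m(x,t)|$. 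Letting $G_m(x,t)$ range over grid cubes of all (arbitrarily small) scales and applying the Lebesgue differentiation theorem to $Mw$ yields $-w_t\det D^2w\geq1$ a.e.

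It remains to extract strict parabolic convexity from this lower bound and the \emph{upper} bounds on the derivatives of $w$. Since $-w_t\leq\ga$ and, via $D^2w\leq\ga I$, $\det D^2w\leq\ga^d$, the inequality $-w_t\det D^2w\geq1$ forces $-w_t\geq\ga^{-d}$ and $\det D^2w\geq\ga^{-1}$ a.e.; as the $d-1$ largest eigenvalues of $D^2w$ are $\leq\ga$, the latter gives $D^2w\geq\ga^{-d}I$ a.e. Because $w(\cdot,t)$ is $C^{1,1}$ and convex and $w(x,\cdot)$ is Lipschitz and non-increasing, these pointwise bounds integrate to
\[
w(y,s)\;\geq\; w(\bar x,\bar t)+Dw(\bar x,\bar t)\cdot(y-\bar x)+c_0\big(\bar t-s+|y-\bar x|^2\big),\qquad (y,s)\in Q_{1/4}(\bar x,\bar t),
\]
with $c_0=\tfrac12\ga^{-d}=c_0(\la,\La,d)$ (recall $s\leq\bar t$ throughout $Q_{1/4}(\bar x,\bar t)$). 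Put $\c[lower']:=\tfrac12c_0$. Evaluating the last display at $(\bar y,\bar s)$, using $w(\bar x,\bar t)=Dw(\bar x,\bar t)=0$, and then letting $k\to\infty$ — so that $\Ga^{u_k}(y_k,s_k)\to w(\bar y,\bar s)$, $D\Ga^{u_k}(x_0^k,t_0^k)\to0$, and $(x_0^k,t_0^k,y_k,s_k)\to(\bar x,\bar t,\bar y,\bar s)$ — shows that, for all large $k$, inequality \pref{lbfin} in fact \emph{holds} at $(x_0^k,t_0^k),(y_k,s_k)$ with constant $\c[lower']$, contradicting the choice of $u_k$. This proves the lemma, with $\c[lower']=\c[lower'](\la,\La,d)$ and a threshold $n_1$ depending only on $\ve$ and $d$ (through the compactness).

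The step I expect to be the real obstacle is the middle one. With no parabolic analogue of John's Lemma, one cannot linearize the sublevel-set geometry directly; instead one must transfer the finite-scale, measure-theoretic information carried by $|\mathcal{P}(G_{n_k}(\cdot,\cdot);\Ga^{u_k})|$ to a genuine pointwise Monge--Amp\`ere bound on the compactness limit $w$. This requires the weak-$*$ stability of the parabolic Monge--Amp\`ere measure, the observation that $w$ automatically inherits the $C^{1,1}_x$/$\mathrm{Lip}_t$ regularity (hence the validity of the area formula and the Lebesgue differentiation step) from the uniform estimates of Section~\ref{sec:pma}, and some care with the order of the two limits (in $k$ and in the scale $m$) so that the weak-$*$ convergence is applied to continuity sets of $Mw$. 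The remaining ingredients — the compactness, and the elementary passage from $-w_t\det D^2w\geq1$ plus the upper bounds to strict convexity — are routine.
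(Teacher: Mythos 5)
Your proof is correct and follows essentially the same route as the paper: argue by contradiction, use the $K_0$-free derivative bounds from Lemma~\ref{lem:upbnds} to get Arzel\`a--Ascoli compactness, transfer the lower bound in \pref{mu_sandwich} to the limit $w$ to obtain $-w_t\det D^2w\geq 1$ a.e., combine with the upper bounds $-w_t\leq\ga$, $D^2w\leq\ga\,\mathrm{Id}$ to get uniform strict parabolic convexity, and contradict the choice of the violating sequence. The only substantive addition is that you make explicit the weak-$*$ stability of the parabolic Monge--Amp\`ere measure over grid-cube continuity sets in the transfer-to-the-limit step, a point the paper compresses into a single sentence (``By the Lebesgue Differentiation Theorem and \pref{mu_sandwich}\dots''); this is a worthwhile clarification rather than a different argument.
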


\begin{proof}
Fix $\ve>0$. Suppose for the purposes of contradiction that \pref{lbfin} does not hold. Therefore, there exists a sequence of $\left(u_{n}, \hat{y}_{n}, \hat{s}_{n}\right)\in S(G_{0})\times G_{0}$ such that $u_{n}$ satisfies \pref{mu_sandwich} for $n$, and $u_{n}$ violates \pref{lbfin} at $(\hat{y}_{n}, \hat{s}_{n})$. Using the convention that $w_{n}:=\Ga^{u_{n}}$, and without loss of generality assuming that $w_{n}\geq 0$ in $G_{0}$ and $w_{n}(0,1)=0$ for each $n$, this amounts to 
\begin{equation}\label{oops}
w_{n}(\hat{y}_{n}, \hat{s}_{n})<c(\hat{s}_{n}+|\hat{y}_{n}|^{2})-\ve
\end{equation}
for $c$ to be chosen.

By \pref{scaled_upest} and \pref{upest}, the family $\left\{w_{n}\right\}$ is equicontinuous and uniformly bounded in $Q_{1/4}(0,1)$. By the Arzela-Ascoli theorem, this implies that there exists a subsequence converging uniformly to a limiting function $w$, with $w$ satisfying 
\begin{equation*}
-w_{t}\leq \ga\quad\text{and}\quad D^{2}w\leq \ga Id\quad\text{almost everywhere}. 
\end{equation*}
 By the Lebesgue Differentiation Theorem and \pref{mu_sandwich}, $w$ also satisfies
\begin{equation*}
1\leq -w_{t}\det D^{2}w\leq 2 \quad\text{almost everywhere}. 
\end{equation*}
Therefore, this yields that $-w_{t}\geq \frac{1}{\ga^{d}}$, and $\det D^{2}w\geq \frac{1}{\ga} Id$ almost everywhere. Since $D^{2}w\leq \ga Id$, this yields that there exists a constant $c_\ga=c(\ga, d)$ so that $D^{2}w\geq c_\ga Id$. 

Consider that by \pref{oops}, since $(\hat{y}_{n}, \hat{s}_{n})\in G_{0}$, there exists a subsequence converging to a point $(\hat{y}, \hat{s})\in G_{0}$ satisfying
\begin{equation*}
w(\hat{y}, \hat{s})<c(\hat{s}+|\hat{y}|^{2})-\ve.
\end{equation*}
However, for $c$ chosen appropriately in terms of $\ga$, this contradicts $-w_{t}\geq \frac{1}{\ga^{d}}$, $D^{2}w\geq \frac{1}{\ga}Id$ almost everywhere.

\end{proof}
Finally, we show that this implies that $u$ will also be strictly convex on the parabolic boundary. 
\begin{thm}\label{growth}
Let $u\in S(G_{1})$. There exists constants $\c[final']=\c[final'](\la, \La, d)$ and $n_{1}=n_{1}(d)<0$ such that if $n\leq n_{1}$ satisfies 
\begin{equation}\label{sandwich}
1\leq \frac{|\mathcal{P}(G_{n}(x,t); \Ga^{u})|}{|G_{n}|}\leq \mu(G_{n}(x,t))\leq 1+3^{n(d+2)}\quad\text{for all}\quad(x,t)\in G_{1},
\end{equation}
then there exists a point $(x_{0}, t_{0})\in \left\{u=\Ga^{u}\right\}\cap G_{n}(0, 9)$ and $(p_{0}, h_{0})\in \mathcal{P}((x_{0}, t_{0}); \Ga^{u})$, so that 
 \begin{equation}\label{genlb}
u(x,t)\geq p_{0}\cdot x+h_{0}+\c[final']\quad\text{for all}\quad \left\{t\leq t_{0} \right\}\cap G_{1}\setminus G_{0}(0,9).
\end{equation}
\end{thm}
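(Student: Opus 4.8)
The plan is threefold: (i) use the lower bound in \pref{sandwich} to produce a contact point of $\Ga^u$ near the top center of $G_1$; (ii) apply Lemma \ref{lem:lbnds} there to get a universal quadratic lower bound for $\Ga^u$ above its tangent plane on a fixed-size space-time cylinder; and (iii) propagate this bound to all of $\{t\le t_0\}\cap(G_1\setminus G_0(0,9))$ using only that $\Ga^u$ is parabolically convex, i.e.\ convex on each time slice and non-increasing in $t$. I read $G_n(0,9)$ as the translate of $G_n$ sitting at the top of $G_1$, so $G_0(0,9)$ is the unit grid cube filling the top central slab of $G_1$. All of this is carried out after the parabolic rescaling carrying $G_1$ onto $G_0$ and $(0,9)$ to $(0,1)$: under it \pref{sandwich} becomes the hypothesis \pref{mu_sandwich} of Lemma \ref{lem:lbnds} with the grid index shifted by one and upper bound $1+3^{n(d+2)}\le 2$, and it is here that $n_1=n_1(d)$ gets fixed, once the $\ve$ of Lemma \ref{lem:lbnds} is taken to be a small universal multiple of $\c[lower']$. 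One preliminary remark: $u\in S(G_1)$ only gives $u_t+\MM^+(D^2u)\ge-(F(0,\cdot,\cdot,\om))_+$, but the lower bound in \pref{sandwich} together with Lemma \ref{bnds} forces the infimum of $(F(0,\cdot,\cdot,\om))_+$ over each small grid cube to be universally bounded, whence (by the modulus of continuity in \pref{f3'}) $(F(0,\cdot,\cdot,\om))_+$ is bounded by a universal constant on $G_1$; so, after a harmless rescaling of $u$, the envelope regularity applies ($\Ga^u$ is $C^{1,1}$ in $x$, Lipschitz in $t$, with $|\mathcal P(Q;u)|=|\mathcal P(Q;\Ga^u)|$ by Lemma \ref{cs}) and, more importantly, the two-sided bounds of Lemmas \ref{lem:upbnds}--\ref{lem:lbnds} hold with constants independent of $K_0$.

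For (i): by the lower bound in \pref{sandwich} together with Lemma \ref{area}, $\c[contact]\,|\{u=\Ga^u\}\cap G_n(0,9)|\ge|\mathcal P(G_n(0,9);\Ga^u)|\ge|G_n|>0$, so there is a contact point $(x_0,t_0)\in\{u=\Ga^u\}\cap G_n(0,9)$. For $n\le n_1(d)$ it lies so close to $(0,9)$ that the geometric facts used below all hold and that $(x_0,t_0)$ falls in the region where the rescaled Lemma \ref{lem:lbnds} is valid. Set $p_0:=D\Ga^u(x_0,t_0)$, $h_0:=\Ga^u(x_0,t_0)-p_0\cdot x_0$ and $L(x):=p_0\cdot x+h_0$; then $(p_0,h_0)\in\mathcal P((x_0,t_0);\Ga^u)$, and $v:=\Ga^u-L$ satisfies $v\ge 0$ on $G_1\cap\{t\le t_0\}$, $v(x_0,t_0)=0$, with $v$ parabolically convex. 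For (ii): applying Lemma \ref{lem:lbnds} at $(x_0,t_0)$ with $\ve$ small and undoing the rescaling produces universal constants $\rho\in(0,1)$, $r_0\in(0,\rho)$, $\delta>0$ such that $v\ge\delta$ on $\partial B_{r_0}(x_0)\times\{t_0\}$ and on $B_\rho(x_0)\times\{t_0-\rho^2\}$, while $B_{r_0}(x_0)$ lies in the spatial section of $G_0(0,9)$ and $t_0-\rho^2>8$.

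For (iii): take $(x,t)\in\{t\le t_0\}\cap(G_1\setminus G_0(0,9))$. If $|x-x_0|\ge r_0$, then on the slice $t=t_0$ convexity of $v(\cdot,t_0)$ together with $v(x_0,t_0)=0$ and $v\ge\delta$ on $\partial B_{r_0}(x_0)$ forces $v(x,t_0)\ge(|x-x_0|/r_0)\,\delta\ge\delta$, and since $v$ is non-increasing in $t$ with $t\le t_0$ we get $v(x,t)\ge\delta$. If $|x-x_0|<r_0$, then $x$ lies in the spatial section of $G_0(0,9)$, so $(x,t)\notin G_0(0,9)$ forces $t\le 8<t_0-\rho^2$, and monotonicity in $t$ with the bottom-slice estimate gives $v(x,t)\ge v(x,t_0-\rho^2)\ge\delta$. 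Either way $v(x,t)\ge\delta$, hence $u(x,t)\ge\Ga^u(x,t)=L(x)+v(x,t)\ge p_0\cdot x+h_0+\delta$, which is \pref{genlb} with $\c[final']:=\delta$.

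The step I expect to demand the most care is fitting (ii) and (iii) together: Lemma \ref{lem:lbnds} controls $\Ga^u$ only on a cylinder of fixed rescaled size about the contact point, and one must verify that, after the rescaling $G_1\to G_0$, this cylinder is large enough that the region $B_{r_0}(x_0)\times(t_0-\rho^2,t_0]$ --- on which slice-convexity alone yields no lower bound --- is contained in $G_0(0,9)$; granted that, the propagation is just the elementary fact that a nonnegative parabolically convex function that vanishes at an interior point and is bounded below on a small cylinder about it is bounded below everywhere below that cylinder. The rest is the routine but essential bookkeeping of keeping every constant independent of $K_0$, which is precisely the purpose of the $K_0$-free formulation of Lemmas \ref{lem:upbnds}--\ref{lem:lbnds}.
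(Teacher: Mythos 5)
Your proof is correct and follows the same blueprint as the paper's: locate a contact point $(x_0,t_0)$ in $G_n(0,9)$ via the lower bound of \pref{sandwich} and Lemma \ref{area}, apply the strict-convexity Lemma \ref{lem:lbnds} there, and propagate the resulting lower bound on a small cylinder to all of $\{t\le t_0\}\cap(G_1\setminus G_0(0,9))$ using only that $\Gamma^u$ is convex on time slices and non-increasing in $t$. The main value added by your write-up is that step (iii) is fully spelled out: you separate the $|x-x_0|\ge r_0$ case (propagation via spatial convexity through $\partial B_{r_0}(x_0)\times\{t_0\}$, then monotonicity in $t$) from the $|x-x_0|<r_0$ case ($(x,t)\notin G_0(0,9)$ forces $t\le 8<t_0-\rho^2$, then monotonicity from the bottom slice $B_\rho(x_0)\times\{t_0-\rho^2\}$). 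The paper compresses this into the single sentence that it suffices to bound $\Gamma^u$ from below on $\partial_p G_0(0,9)$, and then invokes Lemma \ref{lem:lbnds} with $\ve=\c[lower']/2$; note that with that choice of $\ve$ the right-hand side of \pref{lbfin} is $\c[lower'](t_0-s+|y-x_0|^2)-\c[lower']/2$, which is negative throughout $Q_{1/4}(x_0,t_0)$ and therefore does not directly yield a positive lower bound on $\partial_p Q_{1/4}(x_0,t_0)$. Your choice of $\ve$ as a small universal multiple of $\c[lower']$ (of order $\c[lower']/32$, depending on $r_0,\rho$) is what the argument actually requires, and you correctly inherit $n_1$ and $\c[final']$ from that choice, keeping them independent of $K_0$. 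One small remark: your preliminary paragraph about using the lower bound of \pref{sandwich} with Lemma \ref{bnds} to bound $(F(0,\cdot,\cdot,\om))_+$ is not really needed and is not quite what those lemmas give; the $K_0$-independence you want is already built into Lemmas \ref{lem:upbnds}--\ref{lem:lbnds} via the normalized hypothesis \pref{mu_sandwich}, while Lemmas \ref{mereg}, \ref{area}, \ref{cs} are only being used for qualitative regularity, where a $K_0$-dependent constant is harmless.
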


\begin{proof}[Proof of Theorem \ref{growth}]
In order to prove \pref{genlb}, it is enough to obtain a lower bound on $\inf_{\partial_{p}G_{0}(0,9)} \Ga^{u}(\cdot, t)$ for $t\leq t_{0}$. We claim there exists $(x_{0}, t_{0})\in G_{n}(0, 9)$ so that $u(x_{0}, t_{0})=\Ga^{u}(x_{0}, t_{0})$. By \pref{sandwich}, for any $(y,s)\in G_{n}(0,9)$, 
\begin{align*}
1&\leq \int_{G_{0}(0,9)}\frac{|\mathcal{P}(G_{n}(x,t); \Ga^{u})|}{|G_{n}|}~dxdt\\
&=\left|\mathcal{P}(G_{n}(y,s); \Ga^{u})\right|+\int_{G_{0}(0,9)\setminus G_{n}(y,s)} \frac{|\mathcal{P}(G_{n}(x,t); \Ga^{u})|}{|G_{n}|}~dxdt\\
&\leq \left|\mathcal{P}(G_{n}(y,s); \Ga^{u})\right|+(1-3^{n(d+2)})(1+3^{n(d+2)}).
\end{align*}
This shows that $|\mathcal{P}(G_{n}(y,s); \Ga^{u})|>0$ for any $(y,s)\in G_{0}$, which implies by Lemma \ref{area} that 
\begin{equation*}
|G_{n}(0,9)\cap \left\{u=\Ga^{u}\right\}|>0. 
\end{equation*}

Let $(x_{0}, t_{0})\in G_{n}(0,9)\cap \left\{u=\Ga^{u}\right\}$, and consider $(p_{0},  h_{0})\in \mathcal{P}((x_{0}, t_{0}); \Ga^{u})$. Let $\tilde{u}(x, t)=u(x, t)-p_{0}\cdot x-h_{0}$. This yields that $\tilde{u}\in S(G_{0}(0,9))$ and $\tilde{u}(x_{0}, t_{0})=\Ga_{\tilde{u}}(x_{0}, t_{0})=0$. Moreover, we have that $(0,0)\in \mathcal{P}((x_{0}, t_{0}); \Ga^{\tilde{u}})$, and $\Ga^{\tilde{u}}\geq 0$ for all $(x,t)\in G_{0}(0,9)\cap \left\{t\leq t_{0}\right\}$. 

By Lemma \ref{lem:lbnds}, letting $\ve=\c[lower']/2$, since $Q_{1/4}(x_{0}, t_{0})\subset G_{0}(0,9)$, this implies that on $\partial_{p}G_{0}(0,9)$, 
\begin{equation*}
u(x,t)\geq \Ga^{u}(x,t)\geq \frac{\c[lower']}{{2}}.
\end{equation*}
Defining $\c[final']:=\frac{\c[lower']}{2}$ completes the proof. 
\end{proof}

For convenience, we also provide a rescaled version of \pref{genlb} which will be used extensively later in the paper. Let $u\in S(G_{m+n+1})$. Let $n\leq n_{1}$ so that 
\begin{equation*}
\al\leq \frac{|\mathcal{P}(G_{n}(x,t); \Ga^{u})|}{|G_{n}|}\leq \mu(G_{n}(x,t))\leq (1+3^{n(d+2)})\al\quad\text{for all}\quad (x,t)\in G_{m+n+1}.
\end{equation*}
There exists a point $(x_{0}, t_{0})\in \left\{u=\Ga^{u}\right\} \cap G_{n}(0, 3^{2(m+n+1)})$ and $(p_{0}, h_{0})\in$\\$\mathcal{P}((x_{0}, t_{0}); \Ga^{u})$ so that, 
\begin{align}\label{scaledlb}
u(x,t)\geq p_{0}\cdot x+h_{0}+&\c[final']\al^{1/(d+1)}3^{2(m+n)}\\
&\quad\text{for all}\quad \left\{t\leq t_{0}\right\}\cap G_{m+n+1}\setminus G_{m+n}(0, 3^{2(m+n+1)})\notag.
\end{align}

\section{The Construction of $\overline{F}$ and the Construction of Approximate Correctors}\label{sec:qualstuff}
We now define the homogenized operator $\overline{F}: \mathbb{S}^{d}\rightarrow \RR$. In addition, we show how one can obtain ``approximate correctors" as in \cite{linhomog} using the quantity $\mu$. For each $M\in \mathbb{S}^{d}$, we say that $w^{\ve}$ is an approximate corrector of \pref{homeq} if there exists $w^{\ve}$ satisfying
\begin{equation}\label{corrector}
\begin{cases}
w^{\ve}_{t}+F(M+D^{2}w^{\ve}, x, t, \om)=\overline{F}(M) & \text{in}\quad Q_{1/\ve},\\
w^{\ve}=0 & \text{on}\quad\partial_{p}Q_{1/\ve},
\end{cases}
\end{equation}
with $\norm{\ve^{2}w^{\ve}}_{L^{\infty}(Q_{1/\ve})}\rightarrow 0$ as $\ve\rightarrow 0$. Once $w^{\ve}$ exists, the qualitative homogenization (the convergence of $u^{\ve}\rightarrow u$ $\PP$-a.s.) follows by a standard perturbed test function argument \cite{evanshom} as shown in \cite{linhomog}.  { In particular, the uniform ellipticity of $\overline{F}$ follows from the existence of approximate correctors.}

\subsection{Identifying $\overline{F}$}
We identify $\overline{F}(M)$ for each fixed $M\in \mathbb{S}^{d}$. First, we establish a lemma which states that $\mu$ is Lipschitz continuous with respect to the right hand side $\ell$.

{

\begin{lem}\label{lem:lip}
There exists a $C(\lambda,\Lambda,d,M,K_0) > 0$ such that
\begin{equation}
\label{lipschitzmu}
0 \geq \mu(Q, \omega, \ell + s, M) - \mu(Q, \omega, \ell, M) \geq - C |Q| s,
\end{equation}
for all $s \in [0,1]$.
\end{lem}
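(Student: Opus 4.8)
The upper estimate $\mu(Q,\omega,\ell+s,M)-\mu(Q,\omega,\ell,M)\leq 0$ is immediate from the definition \pref{mudef}: since $s\geq 0$, every $u$ with $u_{t}+F(M+D^{2}u,x,t,\omega)\geq \ell+s$ also satisfies $u_{t}+F(M+D^{2}u,x,t,\omega)\geq \ell$, so $S(Q,\omega,\ell+s,M)\subseteq S(Q,\omega,\ell,M)$ and the supremum over the smaller class is no larger.

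For the lower (Lipschitz) bound the plan is a perturbation argument. Fix $u\in S(Q,\omega,\ell,M)$, let $x_{Q}$ denote the center of the spatial slice of $Q$, and set
\[
v:=u+\phi,\qquad \phi(x):=-\tfrac{s}{2d\la}\,|x-x_{Q}|^{2}.
\]
Then $D^{2}\phi=-\tfrac{s}{d\la}\,I$, so $\MM^{-}(D^{2}\phi)=s$, and by the uniform ellipticity hypothesis \pref{f2'} one has, in the viscosity sense,
\[
v_{t}+F(M+D^{2}v,x,t,\omega)\;\geq\; u_{t}+F(M+D^{2}u,x,t,\omega)+\MM^{-}(D^{2}\phi)\;\geq\;\ell+s,
\]
so $v\in S(Q,\omega,\ell+s,M)$, while $\|\phi\|_{L^{\infty}(Q)}\leq C(\la,d)\,s\,(\mathrm{diam}\,Q)^{2}$. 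It therefore suffices to prove the deterministic inequality $|\mathcal{P}(Q;\Ga^{v})|\geq |\mathcal{P}(Q;\Ga^{u})|-C\,|Q|\,s$; dividing by $|Q|$ and taking the supremum over $u\in S(Q,\omega,\ell,M)$ then gives the claim.

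To prove this, first apply Lemma \ref{cs} (equivalently its stated consequence $|\mathcal{P}(Q;w)|=|\mathcal{P}(Q;\Ga^{w})|$) to replace $\Ga^{u},\Ga^{v}$ by $u,v$. Fix a slope $p\in\RR^{d}$. As in the proof of Lemma \ref{muabp}, the fiber $\{h:(p,h)\in\mathcal{P}(\overline{Q};u)\}$ is exactly the range of the non-increasing, continuous running minimum $t_{0}\mapsto \min\{u(x,t)-p\cdot x:(x,t)\in\overline{Q},\,t\leq t_{0}\}$, hence an interval whose length is a difference of two minima of $u-p\cdot x$ over $\overline{Q}$ and over its bottom time-slice. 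Since $v-p\cdot x$ and $u-p\cdot x$ differ pointwise on $\overline{Q}$ by at most $\|\phi\|_{L^{\infty}(Q)}$, the fiber of $\mathcal{P}(\overline{Q};v)$ over $p$ has length at least that of the fiber of $\mathcal{P}(\overline{Q};u)$ over $p$ minus $2\|\phi\|_{L^{\infty}(Q)}$. Integrating in $p$ and using $(a-\delta)_{+}\geq a-\delta\mathbf{1}_{\{a>0\}}$,
\[
|\mathcal{P}(Q;v)|\;\geq\;|\mathcal{P}(Q;u)|-2\|\phi\|_{L^{\infty}(Q)}\,\bigl|\mathrm{proj}_{p}\mathcal{P}(Q;u)\bigr|.
\]
Finally, by Lemma \ref{mereg} (in particular the Lipschitz continuity of $\mathcal{P}[\Ga^{u}]$, applied after rescaling $u$ by the a priori bound on the right-hand side coming from \pref{Mbnd}) the map $(x,t)\mapsto D\Ga^{u}(x,t)$ has oscillation $\leq C(\la,\La,d,M,K_{0})\,\mathrm{diam}\,Q$ on $Q$, so $|\mathrm{proj}_{p}\mathcal{P}(Q;u)|\leq C\,(\mathrm{diam}\,Q)^{d}$; combined with $\|\phi\|_{L^{\infty}(Q)}\leq C s(\mathrm{diam}\,Q)^{2}$ and the fact that for the parabolic cubes $G_{n}$ in the applications $(\mathrm{diam}\,Q)^{d+2}$ is comparable to $|Q|$, this yields $|\mathcal{P}(Q;v)|\geq|\mathcal{P}(Q;u)|-C|Q|s$.

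The main obstacle is obtaining the \emph{linear} rate in $s$. A soft argument — perturb $u$, observe that $\Ga^{u}$ and $\Ga^{v}$ are $L^{\infty}$-close by $\eta\sim s(\mathrm{diam}\,Q)^{2}$, shrink $Q$ by the resulting length scale and bound the contribution of the shell by Lemma \ref{bnds} — only produces a gain of order $|Q|\sqrt{s}$, since an $L^{\infty}$ perturbation of size $\eta$ moves contact points by $\sim\sqrt{\eta}$. Achieving the sharp rate forces the fiberwise analysis of $\mathcal{P}$ in the slope variable above, and with it the uniform control of the measure of the slope-projection of $\mathcal{P}(Q;u)$ supplied by the a priori regularity of the monotone envelope; I expect this projection bound to be the most delicate point to set up carefully.
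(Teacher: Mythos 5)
Your approach is correct in outline, but it takes a genuinely different route from the paper's. The paper perturbs \emph{in time}: it sets $u^{s}(x,t):=u(x,t)+st$, which again lands in $S(Q,\omega,\ell+s,M)$ since $u^{s}_{t}=u_{t}+s$ and $D^{2}u^{s}=D^{2}u$, and then compares $|\mathcal{P}(Q;\Gamma^{u^{s}})|$ to $|\mathcal{P}(Q;\Gamma^{u})|$ by the area formula restricted to the contact set. Because only the time-derivative shifts, the integrand $-w^{s}_{t}\det D^{2}w^{s}$ changes by exactly $-s\det D^{2}w^{s}$, and the uniform bound $|w^{s}_{t}|,|D^{2}w^{s}|\leq C(K_{0},\ell,M)$ on $\{u^{s}=w^{s}\}$ (Lemmas \ref{mereg} and \ref{area}) gives the linear rate in $s$ at once, with no need to track the slope variable. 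Your spatial-quadratic perturbation is a legitimate alternative (and does satisfy the equation with right-hand side $\ell+s$), but because it changes the Hessian rather than the time derivative it forces you to measure the effect of an $L^{\infty}$-perturbation on $|\mathcal{P}(Q;\cdot)|$ --- hence the fiberwise Fubini decomposition and the extra slope-projection bound $|\mathrm{proj}_{p}\mathcal{P}(Q;u)|\lesssim(\mathrm{diam}\,Q)^{d}$. So you are buying the linear rate with an additional ingredient the paper does not need.

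Two points in your argument deserve tightening. First, the fiber over a fixed slope $p$ is $[\min_{\overline{Q}}(u-p\cdot x),\ \inf_{\partial_{p}Q}(u-p\cdot x)]$ (as in the proof of Lemma \ref{muabp}) --- the upper endpoint is the infimum over the \emph{entire} parabolic boundary, lateral sides included, not just the bottom time-slice as you state. Your two-sided $\|\phi\|_{L^{\infty}}$ estimate survives this correction since both endpoints still move by at most $\|\phi\|_{L^{\infty}}$, but the characterization as written is not right. Second, the slope-projection bound is the delicate step and you should not leave it implicit: what you actually need is that the oscillation of $D\Gamma^{u}$ over $G_{n}$ is $\lesssim 3^{n}$ (parabolic diameter), not $\lesssim 3^{2n}$ (Euclidean diameter of $G_{n}$). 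This follows from Lemma \ref{mereg} only after normalizing to $G_{0}$ and rescaling back, because the Lipschitz modulus in $t$ is not scale-invariant under parabolic scaling ($\partial_{t}D\Gamma^{u}$ picks up a factor $3^{-n}$). If one reads the Lemma as a Euclidean Lipschitz bound on the unscaled domain, the time contribution ruins the estimate. The paper's time-perturbation argument sidesteps this discussion entirely, which is the main reason it is the cleaner route here.
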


\begin{proof}
The left inequality follows from the comparison principle for viscosity solutions, since $S(Q, \om, \ell+s, M)\subseteq S(Q, \om, \ell, M)$. To obtain the right inequality, let $u \in S(Q,\omega,\ell,M)$, and define $u^s(x,t):= u(x,t) + s t$  which lies in $S(Q,\omega,\ell+s,M)$.  Let $w^s$ denote the monotone envelope of $u^s$.  We note that $|w^s_t|, |D^2 w^s| \leq C(K_{0}, \ell+s, M)$ on the contact set $\{ u^s = w^s \}$ by Lemma \ref{mereg} and Lemma \ref{area}. Therefore, by the area formula, this implies that 
\begin{align*}
|\mathcal{P}(Q; w^s)| &= \int_{\{ u^s = w^s \}\cap Q} - u^s_t \det D^2 u^s \,dx,\\
&\geq \int_{\{ u = w \}\cap \left\{u_{t}\leq -s\right\}\cap Q} - u^s_t \det D^2 u^s \,dx,\\
&\geq \int_{\{ u=w\}\cap Q}-u_{t}\det D^{2}u-Cs|Q|\\
&=|\mathcal{P}(Q; w)|-Cs|Q|. 
\end{align*}
By taking the supremum over $u\in S(Q, \om, \ell, M)$, this yields \pref{lipschitzmu}. 
\end{proof}

}

\begin{lem}\label{lemlim}
Let $M\in \mathbb{S}^{d}$. For every $n\in \NN$, the map
\begin{equation*}
\ell\rightarrow \EE[\mu(G_{n}, \om,\ell, M)]\quad\text{is continuous and nonincreasing.} 
\end{equation*}
Similarly, the map 
\begin{equation*}
\ell\rightarrow \EE[\mu^{*}(G_{n}, \om, \ell, M)]\quad\text{is continuous and nondecreasing.}
\end{equation*}

In addition, there exists $\overline{\ell}(M)\in \RR$ so that $\PP$-a.s. in $\om$, 
\begin{align}\label{eqest}
&\lim_{n\rightarrow\infty} \mu(G_{n}, \om, \overline{\ell}(M), M)=\lim_{n\rightarrow \infty} \EE[\mu(G_{n}, \om, \overline{\ell}(M), M)]\notag\\
&=\lim_{n\rightarrow \infty}\EE[\mu^{*}(G_{n}, \om, \overline{\ell}(M), M)]=\lim_{n\rightarrow\infty} \mu^{*}(G_{n}, \om,\overline{\ell}(M), M).
\end{align}
\end{lem}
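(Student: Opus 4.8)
The plan is to establish the monotonicity and continuity claims first, as they are essentially immediate, and then spend the real effort on producing $\overline{\ell}(M)$ via an intermediate-value argument combined with the subadditive ergodic theorem. For the monotonicity: since $S(Q,\om,\ell+s,M)\subseteq S(Q,\om,\ell,M)$ for $s>0$, taking suprema of $|\mathcal{P}(Q;\Ga^u)|$ over the respective classes shows $\mu(G_n,\om,\cdot,M)$ is nonincreasing in $\ell$, hence so is its expectation; the statement for $\mu^*$ follows from $\mu^*(Q,\om,\ell,M)=\mu(Q,\om^*,-\ell,-M)$ together with the pushforward identity $\pi_\#\PP$. Continuity in $\ell$ of $\EE[\mu(G_n,\om,\ell,M)]$ follows from Lemma \ref{lem:lip}: the bound $0\geq \mu(G_n,\om,\ell+s,M)-\mu(G_n,\om,\ell,M)\geq -C|G_n|s$ is uniform in $\om$, so it passes to the expectation and gives Lipschitz (hence continuous) dependence on $\ell$ on compact intervals; again $\mu^*$ is handled by the involution.

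Next I would pin down the limit functions. By Corollary \ref{setok} (applied with the shifted operator, i.e. replacing $\ell=0,M=0$ by general $\ell,M$ — the argument is identical since $R(G_n,\om,\ell,M):=\sup_{u\in S(G_n,\om,\ell,M)}|\mathcal{P}(G_n;\Ga^u)|$ is still stationary, subadditive by Lemma \ref{lemdecomp}, and bounded by \pref{bndest}), the limit $\overline{\mu}(\ell,M):=\lim_{n\to\infty}\mu(G_n,\om,\ell,M)$ exists $\PP$-a.s., is deterministic by ergodicity, and equals $\lim_{n\to\infty}\EE[\mu(G_n,\om,\ell,M)]$. Likewise define $\overline{\mu^*}(\ell,M):=\lim_n\mu^*(G_n,\om,\ell,M)=\lim_n\EE[\mu^*(G_n,\om,\ell,M)]$. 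Both are monotone in $\ell$ (nonincreasing resp. nondecreasing) as limits of monotone functions, and both are continuous in $\ell$: the Lipschitz estimate of Lemma \ref{lem:lip} is uniform in $n$, so it is inherited by the limit. Thus the map $\ell\mapsto \overline{\mu}(\ell,M)-\overline{\mu^*}(\ell,M)$ is continuous and nonincreasing.

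To produce $\overline{\ell}(M)$ I would apply the intermediate value theorem to $g(\ell):=\overline{\mu}(\ell,M)-\overline{\mu^*}(\ell,M)$. For $\ell$ very negative, \pref{bndest} gives $\mu(G_n,\om,\ell,M)\geq \c[lower]\inf_{G_n}(F(M,\cdot,\cdot,\om)-\ell)_+^{d+1}\geq \c[lower](-\ell-K_0(1+|M|))_+^{d+1}$, which $\to\infty$ as $\ell\to-\infty$, while $\mu^*(G_n,\om,\ell,M)=\mu(G_n,\om^*,-\ell,-M)\leq \c[upper][K_0(1+|M|)+\ell]_+^{d+1}=0$ once $\ell$ is sufficiently negative; hence $g(\ell)>0$ for $\ell$ very negative. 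Symmetrically, for $\ell$ very positive $\overline{\mu}(\ell,M)=0$ (by the upper bound in \pref{bndest}, $\mu(G_n,\om,\ell,M)=0$ once $\ell>K_0(1+|M|)$) while $\overline{\mu^*}(\ell,M)\to\infty$, so $g(\ell)<0$. By continuity there exists $\overline{\ell}(M)$ with $g(\overline{\ell}(M))=0$, i.e. $\overline{\mu}(\overline{\ell}(M),M)=\overline{\mu^*}(\overline{\ell}(M),M)$, which is exactly the chain of equalities in \pref{eqest} once we also invoke that each of the four limits equals the corresponding common value just established.

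The main obstacle I anticipate is not the IVT itself but justifying that the a.s.\ limits and the expectation limits genuinely coincide in the shifted setting and that continuity survives passing to the limit — in particular checking that the hypotheses of the Akcoglu–Krengel theorem hold verbatim for $R(\cdot,\om,\ell,M)$ with $M\neq 0$, $\ell\neq 0$ (stationarity of $F$ only for $\ZZ^{d+1}$-shifts by \pref{foops}, boundedness from \pref{bndest}, subadditivity from Lemma \ref{lemdecomp}, which was stated for $\ell=M=0$ but whose proof — resting only on Lemma \ref{area} applied to $u_t+\MM^+(D^2u)\geq \ell - F(M,x,t,\om)\geq -C(1+|M|)+\ell$ via \pref{Mbnd}, which is what Lemma \ref{area} tolerates after rescaling — goes through unchanged). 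Once these bookkeeping points are dispatched, the uniform Lipschitz bound of Lemma \ref{lem:lip} does all the analytic work and the rest is soft.
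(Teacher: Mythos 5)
Your proposal is correct and matches the paper's approach: monotonicity from the inclusion of solution classes, continuity from Lemma~\ref{lem:lip}, existence of $\overline{\ell}(M)$ by an intermediate-value argument on the (continuous, monotone) limits (the paper argues this more tersely, using \pref{bndest} to show $E$ vanishes for large $\ell$ and $E^*$ for small $\ell$), and the outer equalities of \pref{eqest} from ergodicity and the subadditive ergodic theorem applied to the shifted quantity $R(G_n,\om,\ell,M)$, whose stationarity, subadditivity, and boundedness you correctly verify. One small caveat: you quote Lemma~\ref{lem:lip} with a factor $|G_n|$, which as written would make the Lipschitz constant blow up with $n$ and defeat your claim that the bound is inherited by the limit; but the factor $|Q|$ in \pref{lipschitzmu} is a misprint (dividing the last display of its proof, $|\mathcal{P}(Q;w^s)|\geq|\mathcal{P}(Q;w)|-Cs|Q|$, by $|Q|$ gives $\mu(Q,\omega,\ell+s,M)-\mu(Q,\omega,\ell,M)\geq -Cs$, with no $|Q|$), and it is this domain-uniform estimate that justifies passing the Lipschitz bound to the limit in $n$, as your argument requires.
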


\begin{proof}
The Lipschitz continuity and monotonicity follow from Lemma \ref{lem:lip}. By \pref{bndest}, $\EE[\mu(G_{n}, \om, \ell)]=0$ for all $\ell\geq K_{0}(1+|M|)$. In particular, this implies that 
\begin{equation*}
\lim_{n\rightarrow\infty} \EE[\mu(G_{n}, \om, \ell)]=0\quad\text{for all}\quad\ell\geq K_{0}(1+|M|).
\end{equation*}
 Similarly, 
 \begin{equation*}
 \lim_{n\rightarrow\infty} \EE[\mu^{*}(G_{n}, \om, \ell)]=0\quad\text{for all}\quad \ell\leq -K_{0}(1+|M|). 
 \end{equation*}

Using the monotonicity in $\ell$ and \pref{bndest}, there exists a choice of $\overline{\ell}$ so that \\$\displaystyle \lim_{n\rightarrow \infty} \EE[\mu(G_{n}, \om,\overline{\ell})]=\lim_{n\rightarrow \infty}\EE[\mu^{*}(G_{n}, \om, \overline{\ell})]$. The outer equalities of \pref{eqest} hold in light of the ergodicity assumption \pref{f1'} and the subadditive ergodic theorem.
\end{proof}

Using Lemma \ref{lemlim}, we define
\begin{equation}
\overline{F}(M):=\overline{\ell}(M).
\end{equation}
We will now show that $\overline{F}(M)$ agrees with the effective operator constructed in \cite{linhomog}, and thus the uniqueness follows. To do this, it is enough to show that solutions $w^{\ve}$ of \pref{corrector} exist and satisfy the desired limiting behavior. 

\subsection{A Qualitative Homogenization Argument}
The construction of approximate correctors \pref{corrector} follows in two steps. First, we show that for any $M\in \mathbb{S}^{d},$ it is impossible for both $E(\overline{\ell}(M), M):=\displaystyle \lim_{n\rightarrow\infty} \mu(G_{n}, \om, \overline{\ell}(M), M)$ and $E^{*}(\overline{\ell}, M):=\displaystyle \lim_{n\rightarrow\infty} \mu^{*}(G_{n}, \om, \overline{\ell}(M), M)$ to be positive. Applying Lemma \ref{muabp} allows us to conclude. 

For convenience, we provide a precise statement of the Harnack inequality for parabolic equations, as can be found in \cite{wangreg1, cyriluis}. We will use the notation of this theorem in the future. 

\begin{thm}[Harnack Inequality]\label{harnack}
Let $u$ be nonnegative, and { $u_{t}+\MM^{+}(D^{2}u)\geq -|f|$ and $u_{t}+\MM^{+}(D^{2}u)\leq |f|$}. Then there exists a universal $C=C(\la, \La, d)$ so that 
\begin{equation*}
\sup_{\tilde{Q}} u\leq C(\inf_{Q_{\rho^{2}}} u+\norm{f}_{L^{d+1}(Q_{1})})
\end{equation*}
where $\tilde{Q}:=B_{\frac{\rho^{2}}{2\sqrt{2}}}\times (-\rho^{2}+\frac{3}{8} \rho^{4}, -\rho^{2}+\frac{1}{2}\rho^{4})\subseteq Q_{1}$, and $\rho=\rho(\la, \La, d)$. 
\end{thm}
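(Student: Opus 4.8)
This is the parabolic Krylov--Safonov Harnack inequality; in the paper it is simply quoted from \cite{wangreg1, cyriluis}, but if one were to prove it the plan would be the following. The inequality is the composition of two one-sided statements, each available because $u$ lies in both Pucci classes: a \emph{local maximum principle} for subsolutions, $\sup_{\tilde Q}u\le C\bigl(\norm{u^{+}}_{L^{\ve}(Q')}+\norm{f}_{L^{d+1}(Q_{1})}\bigr)$, and a \emph{weak Harnack inequality} for nonnegative supersolutions, $\norm{u}_{L^{\ve}(Q'')}\le C\bigl(\inf_{Q_{\rho^{2}}}u+\norm{f}_{L^{d+1}(Q_{1})}\bigr)$, with $\ve=\ve(\la,\La,d)>0$ and $Q',Q''$ intermediate parabolic cylinders chosen so that, once combined, the $L^{\ve}$ norms cancel. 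Both rest on the parabolic Aleksandrov--Bakelman--Pucci estimate, which is exactly the one-sided control of solutions by $|\mathcal{P}(\cdot;\Ga^{v})|$ already used in Section \ref{sec:mu}: if $v_{t}+\MM^{+}(D^{2}v)\ge -|f|$ in a parabolic cube then $\sup v^{-}\le \sup_{\partial_{p}}v^{-}+C\norm{f}_{L^{d+1}(\{v=\Ga^{v}\})}$, which follows from the regularity of $\Ga^{v}$ (Lemma \ref{mereg}, Lemma \ref{meeq}) and the area-formula computation in the proof of Lemma \ref{area}.

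From the ABP estimate one first proves a single-scale measure estimate by inserting an explicit parabolically convex barrier supported on one parabolic cube: for $u\ge 0$ a supersolution with $\inf u\le 1$ on the later slab and $\norm{f}_{L^{d+1}}$ small, a fixed fraction of the earlier slab lies in $\{u\le C\}$. Iterating this across the dyadic parabolic scales $G_{n}(x,t)$ yields the power decay $|\{u>t\}\cap Q^{-}|\le Ct^{-\ve}|Q^{-}|$, i.e.\ $u\in L^{\ve}$ with the weak Harnack bound; the iteration uses a parabolic Calder\'on--Zygmund decomposition together with the Krylov--Safonov growing ink-spots covering lemma, both adapted to the anisotropic grid. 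A dual construction (barriers from below) gives the local maximum principle. Chaining the two one-sided estimates across a finite family of cylinders interpolating between $\tilde Q$, $Q_{\rho^{2}}$ and $Q_{1}$ then produces the theorem, and the auxiliary radius $\rho=\rho(\la,\La,d)$ is fixed at the very end so that this chain closes.

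The main obstacle --- and the reason the statement is time-asymmetric, with $\tilde Q$ sitting near time $-\rho^{2}$ while $Q_{\rho^{2}}$ sits near time $0$, and with the exotic $\rho^{4}$-scales --- is the absence of time reversal: a parabolic supersolution obeys infimum bounds only \emph{forward} in time, so the weak Harnack controls an $L^{\ve}$ norm on an earlier time slab by an infimum on a later one, and the ink-spots covering must be carried out with cubes that look only backward in time. Managing these time lags so that the composition of the local maximum principle and the weak Harnack still closes up into a genuine Harnack chain is the delicate bookkeeping, and it is precisely what forces the unusual geometry appearing in the statement; it also requires checking that the Lebesgue density and Vitali-type covering lemmas hold for parabolic cylinders, which is the content of the appendix of \cite{cyriluis}. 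Beyond this, the argument is a faithful transcription of the elliptic Krylov--Safonov proof, with the monotone envelope $\Ga^{u}$ playing the role of the convex envelope.
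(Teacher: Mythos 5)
The paper does not prove this theorem; it is quoted verbatim from \cite{wangreg1, cyriluis}, as you correctly note at the outset. Your sketch is a faithful outline of the parabolic Krylov--Safonov argument that those references use: ABP via the monotone envelope, a barrier-driven single-scale measure estimate, iteration by the ink-spots/Calder\'on--Zygmund covering on the anisotropic grid to get the $L^{\ve}$ weak Harnack, a dual local maximum principle, and a Harnack chain whose geometry is dictated by the forward-in-time asymmetry of supersolution control. Since there is no proof in the paper to compare against, the only thing to check is fidelity to the cited sources, and your outline matches them; the one point worth flagging is that the paper's stated hypothesis uses $\MM^{+}$ on both sides ($u_t+\MM^{+}(D^2u)\geq -|f|$ and $u_t+\MM^{+}(D^2u)\leq|f|$), which is slightly stronger than the usual two-Pucci-class membership $u_t+\MM^{+}(D^2u)\geq -|f|$, $u_t+\MM^{-}(D^2u)\leq|f|$ that your sketch (correctly) uses to run the local maximum principle and weak Harnack halves; the paper's form implies the standard one, so nothing is lost, but your reading silently normalizes the hypothesis.
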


{
The Harnack inequality implies that $E$ and $E^*$ must vanish when they are equal.
}

\begin{lem}\label{both0}
Fix $M\in \mathbb{S}^{d}$. If $\ell\in \RR$ such that
\begin{equation}
\lim_{n\rightarrow\infty} \EE[\mu(G_{n}, \om, \ell, M)]=E(\ell, M)=E^{*}(\ell, M)=\lim_{n\rightarrow\infty}\EE[\mu(G_{n}, \om^{*}, -\ell, M)],
\end{equation}
then $E(\ell, M)=E^{*}(\ell, M)=0$. 
\end{lem}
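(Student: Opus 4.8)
The plan is to argue by contradiction, assuming that the common value $E(\ell, M) = E^*(\ell, M)$ is strictly positive, and to extract from this a violation of the Harnack inequality. The key point is that when $\mu(G_n, \omega, \ell, M)$ has a positive limit, Theorem \ref{growth} (in its rescaled form \pref{scaledlb}) produces a quasi-maximizing supersolution $u$ whose monotone envelope bends strictly upward near the parabolic boundary of a large cube; by symmetry (passing through $\omega^*$, $-M$, $-\ell$), the analogous statement produces a subsolution with the same strict bending from the other side. The strategy is to superimpose these two functions — or rather, a suitable normalized difference of an optimal supersolution for $\mu$ and an optimal subsolution for $\mu^*$ — to obtain a function which is simultaneously a sub- and supersolution of a Pucci equation, is nonnegative (after subtracting the supporting affine function), yet is forced to be large at the parabolic boundary while being zero (or small) at an interior contact point. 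This contradicts the Harnack inequality of Theorem \ref{harnack}, which would force the boundary infimum to be controlled by the interior value.

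First I would fix $M$ and $\ell$ with $E(\ell,M) = E^*(\ell,M) =: \alpha$ and assume for contradiction $\alpha > 0$. Using \pref{eqest}-type convergence (both $\mu(G_n,\omega,\ell,M) \to \alpha$ and $\mu^*(G_n,\omega,\ell,M)\to\alpha$ a.s., and by the decomposition Lemma \ref{lemdecomp} the local averages $\fint_{G_{m+n}}\mu(G_n(x,t),\omega,\ell,M)\,dxdt$ also converge to $\alpha$), I would choose, for $n$ sufficiently negative, a scale $m$ large and a supersolution $u \in S(G_{m+n+1}, \omega, \ell, M)$ that nearly optimizes $\mu(G_{m+n+1})$, so that the sandwich hypothesis of \pref{scaledlb} holds (after rescaling $\mu$ by $\alpha$ via Lemma \ref{bnds} and the scaling property). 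This gives a contact point $(x_0,t_0)$ near the center and an affine support $(p_0,h_0)$ with $u(x,t) \geq p_0\cdot x + h_0 + c\,\alpha^{1/(d+1)} 3^{2(m+n)}$ on $\{t\leq t_0\}\cap (G_{m+n+1}\setminus G_{m+n})$. Doing the same construction for $\mu^*$ yields a subsolution $v$ with the reverse strict bending, $v(x,t) \leq q_0\cdot x + k_0 - c\,\alpha^{1/(d+1)}3^{2(m+n)}$ on a comparable annular region.

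The heart of the argument is then to combine $u$ and $v$. Subtracting supporting affine functions, one may normalize so that $\tilde u := u - (p_0\cdot x + h_0) \geq 0$ with a contact zero near the center, and $-\tilde v := -(v - (q_0\cdot x + k_0)) \geq 0$ with a contact zero near the center; the function $w := \tilde u - \tilde v$ (or directly $\tilde u$ against the sub/supersolution structure) satisfies $w_t + \mathcal M^+(D^2 w) \geq -C$ and $w_t + \mathcal M^-(D^2 w) \leq C$ in a large cube, is essentially nonnegative, vanishes at an interior point, yet is $\geq c\,\alpha^{1/(d+1)}3^{2(m+n)}$ on the parabolic boundary of the large cube. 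Rescaling by parabolic dilation to bring the large cube to $Q_1$ — under which the right-hand side $C$ becomes negligibly small and the boundary lower bound becomes $\sim \alpha^{1/(d+1)}$ — and applying Theorem \ref{harnack} gives $\alpha^{1/(d+1)} \lesssim c\cdot(\text{small interior value}) + (\text{small }L^{d+1}\text{ norm})$, which is impossible for $\alpha > 0$. The main obstacle I anticipate is the bookkeeping needed to align the two contact points and affine supports for $u$ and $v$ (they need not coincide), and to arrange that the region where both strict-bending estimates are valid is genuinely annular and large enough to feed into the Harnack geometry; this likely requires working in $G_1$ versus $G_0(0,9)$ as in Theorem \ref{growth} and choosing the contact points to be close by a compactness or averaging argument, then translating so that a single affine function works for both up to acceptable error.
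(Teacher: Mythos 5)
Your overall strategy matches the paper's proof: assume $\alpha := E(\ell,M) = E^*(\ell,M) > 0$, extract a quasi-maximizing supersolution $u$ via (the rescaled) Theorem \ref{growth} and, by passing through $\omega^*$, a companion $u^*$ (your $-v$), form $w = u + u^*$, and contradict the resulting lower bound with the Harnack inequality. The step of cancelling the right-hand sides $\ell$ and $-\ell$ to get $w_t + \mathcal{M}^+(D^2 w) \geq 0$ and $w_t + \mathcal{M}^-(D^2 w) \leq 0$, and then invoking the ABP--Krylov--Tso estimate to push the boundary lower bound $w \gtrsim \alpha^{1/(d+1)} 3^{2n}$ into the interior, is exactly right.

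However, there is a genuine gap in how you then close the argument. You propose to apply Theorem \ref{harnack} directly to $w$, asserting that $w$ ``vanishes at an interior point.'' This is false in general: $\tilde u$ vanishes at its contact point $(x_0,t_0)$ and $\tilde u^*$ at $(x_0^*,t_0^*)$, and these need not coincide, so $w = \tilde u + \tilde u^*$ has no reason to vanish anywhere. Worse, once you have the ABP lower bound $w \geq C\,3^{2n}\alpha^{1/(d+1)}$ throughout the interior, the Harnack inequality $\sup w \leq C\,\inf w$ applied to $w$ is vacuous --- both sides are comparably large, and no contradiction emerges. Your proposed fix (aligning the contact points by a compactness or averaging argument) is also not obviously executable, and the paper does something different and cleaner. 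The resolution is to apply the Harnack inequality not to $w$ but to $u$ and to $u^*$ \emph{separately}: each is nonnegative, each has infimum $0$ in the small central cube $G_0(0,3^{2(n+1)}) \subseteq Q_{\rho^2 3^s}$, and each satisfies a uniformly parabolic inequality with right-hand side controlled by $K_0 + |\ell|$. Harnack then gives $\sup_{\tilde Q} u \leq C 3^{2s}$ and $\sup_{\tilde Q} u^* \leq C 3^{2s}$ with $s$ fixed, hence $\sup_{\tilde Q} w \leq C 3^{2s}$, which contradicts the ABP lower bound $w \geq C 3^{2n}\alpha^{1/(d+1)}$ once $n$ is large relative to $s$, $\alpha$, $K_0$, and $\ell$. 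You should replace the Harnack-on-$w$ step with this Harnack-on-$u$-and-$u^*$ step; the rest of your outline then goes through as in the paper.
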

\begin{proof}
We drop the dependence on $M$ since it is fixed throughout the proof. Suppose that both $E(\ell)=E^{*}(\ell):=\al>0$. By the subadditive ergodic theorem, there exists a choice of $m$ sufficiently large so that for all $(x,t)\in G_{m+n}$ with $n$ large to be chosen, 
{
\begin{equation*}
\frac{1}{2} \al\leq \frac{|\mathcal{P}(G_{m}(x,t); \Ga^{u})|}{|G_{m}|}\leq \mu(G_{m}, \om, \ell)\leq 2 \al.
\end{equation*}
}
Without loss of generality, we assume that $m=0$. By Theorem \ref{growth} rescaled, choosing $n$ sufficiently large, and after an affine transformation, there exists a function $u$ so that 
\begin{equation}\label{eqsup_l}
u_{t}+F(D^{2}u, x, t, \om)=\ell \quad\text{in}\quad G_{n}(0, 3^{2(n+1)})
\end{equation}
and $(x_{0},t_{0})\in G_{0}(0, 3^{2(n+1)})$ so that 
\begin{equation}\label{fromregsup}
u\geq u(x_{0}, t_{0})+C3^{2n}\al^{1/(d+1)}\quad\text{on}\quad \partial_{p}G_{n}(0, 3^{2(n+1)})\cap \left\{t\leq t_{0}\right\},
\end{equation}
and
\begin{equation*}
\inf_{G_{n}(0, 3^{2(n+1)})\cap\left\{t\leq t_{0}\right\}} u=\inf_{G_{0}(0, 3^{2(n+1)})\cap\left\{t\leq t_{0}\right\}}u=u(x_{0}, t_{0})=0.
\end{equation*}
This is done by extracting $u'\in S(G_{n+1}, \om)$ such that \pref{genlb} holds. Upon an affine transformation and solving \pref{eqsup_l} with $u=u'$ on $\partial_{p}G_{n}(0,3^{2(n+1)})$, we have the claim. 
Similarly, there exists $u^{*}$ satisfying
\begin{equation}\label{eqsub_l}
u^{*}_{t}+F(D^{2}u^{*}, x, t, \om^{*})=-\ell\quad\text{in}\quad G_{n}(0, 3^{2(n+1)})
\end{equation}
and for some $(x_{0}^{*}, t_{0}^{*})\in G_{0}(0, 3^{2(n+1)})$,
\begin{equation}\label{fromregsub}
u^{*}\geq u^{*}(x_{0}, t_{0})+C3^{2n}\al^{1/(d+1)}\quad\text{on}\quad \partial_{p}G_{n}(0, 3^{2(n+1)})\cap \left\{t\leq t^{*}_{0}\right\},
\end{equation}
and
\begin{equation*}
\inf_{G_{n}(0, 3^{2(n+1)})\cap\left\{t\leq t_{0}^{*}\right\}} u^{*}= \inf_{G_{0}(0, 3^{2(n+1)})\cap \left\{t\leq t_{0}^{*}\right\}} u^{*}=u^{*}(x_{0}^{*}, t_{0}^{*})=0.
\end{equation*}

Let $\overline{t}=\min\left\{t_{0}, t_{0}^{*}\right\}$. Notice that $w:=u+u^{*}$ satisfies 
\begin{align*}
w_{t}+\MM^{+}(D^{2}w)&\geq u_{t}+u^{*}_{t}+F(D^{2}u, x, t, \om)+F(D^{2}u^{*}, x, t, \om^{*})\\
&=0\quad\text{in}\quad G_{n}(0, 3^{2(n+1)}),
\end{align*}
and 
\begin{equation*}
w\geq C3^{2n}\al^{1/(d+1)}\quad\text{on}\quad \partial_{p}G_{n}(0, 3^{2(n+1)})\cap\left\{t\leq \overline{t}\right\}.
\end{equation*}

By the Alexandrov-Backelman-Pucci-Krylov-Tso estimate \cite{wangreg1, cyriluis}, this implies that 
\begin{equation}\label{lowercomp}
w\geq C3^{2n}\al^{1/(d+1)}\quad\text{in}\quad G_{n}(0, 3^{2(n+1)})\cap\left\{t\leq \overline{t}\right\}.
\end{equation}
Let $s$ be defined as the smallest integer such that $\rho^{2}3^{s}\geq \sqrt{d}$ where $\rho$ is defined in the Harnack inequality (Theorem \ref{harnack}). We may assume that $s\leq n$ by choosing $n$ larger if necessary. We observe that in $G_{s}(0, 3^{2(n+1)})$, $u, u^{*}$ also each satisfy
\begin{equation*}
u_{t}+\MM^{+}(D^{2}u)\geq -|\ell|-K_{0}\quad\text{and}\quad K_{0}+|\ell| \geq u_{t}+\MM^{-}(D^{2}u),
\end{equation*}
and 
\begin{equation*}
u^{*}_{t}+\MM^{+}(D^{2}u^{*})\geq -|\ell|-K_{0} \quad\text{and}\quad |\ell|+K_{0} \geq u^{*}_{t}+\MM^{-}(D^{2}u^{*}).
\end{equation*}

Since $\inf_{G_{0}(0, 3^{2(n+1)})}u=\inf_{G_{0}(0, 3^{2(n+1)})}u^{*}=0$, and 
\begin{equation*}
G_{0}(0, 3^{2(n+1)})\subseteq Q_{\rho^{2}3^{s}}(0, 3^{2(n+1)})
\end{equation*}
by our choice of $s$, this implies by the Harnack inequality that there exists $C=C(\la, \La, d, \ell, K_{0})$ so that 
\begin{equation*}
\sup_{\tilde{Q}}u \leq C3^{2s}\quad\text{and}\quad \sup_{\tilde{Q}} u^{*}\leq C3^{2s},
\end{equation*}
where $\tilde{Q}\subseteq G_{s}(0, 3^{2(n+1)})$ is a rescaled version of $\tilde{Q}$ defined in Theorem \ref{harnack}. Thus, there exists $C=C(\la, \La, d, \ell, K_{0})>0$ so that 
\begin{equation*}
w\leq C3^{2s}\quad \text{in}\quad \tilde{Q}\subseteq G_{s}(0, 3^{2(n+1)}).
\end{equation*}
By choosing $n$ sufficiently large, depending on $\ell, K_{0}, \al$, we obtain a contradiction with \pref{lowercomp}. Therefore, $\al=0$. 
\end{proof}

We next show that $w^{\ve}$ solving \pref{corrector} has the desired decay with this definition of $\overline{F}(M)$. Letting $\ve=3^{-n}$, we relabel \pref{corrector} as 
\begin{equation}\label{newcorrector}
\begin{cases}
w^{n}_{t}+F(M+D^{2}w^{n}, x, t, \om)=\overline{F}(M) & \text{in}\quad G_{n},\\
w^{n}=0 & \text{on}\quad \partial_{p}G_{n},
\end{cases}
\end{equation}
and we want to show that $\norm{3^{-2n}w^{n}}_{L^{\infty}(G_{n})}\rightarrow 0$ as $n\rightarrow \infty$. 

Consider that since $E(\overline{F}(M),M)=E^{*}(\overline{F}(M), M)=0$, this implies that almost surely, 
\begin{equation*}
 \lim_{n\rightarrow\infty} \mu(G_{n}, \om)=0=\lim_{n\rightarrow\infty} \mu^{*}(G_{n}, \om).
 \end{equation*} By Lemma \ref{muabp} and \pref{newcorrector}, this implies that 
\begin{equation*}
0\leq \inf_{G_{n}} 3^{-2n}w^{n}+\c[ptf]\mu(G_{n}, \om)^{1/(d+1)},
\end{equation*}
and 
\begin{equation*}
0\geq \sup_{G_{n}} 3^{-2n}w^{n}-\c[ptf]\mu^{*}(G_{n}, \om)^{1/(d+1)}.
\end{equation*}
Taking $n\rightarrow \infty$, this yields
\begin{equation}\label{quaddecay}
\lim_{n\rightarrow\infty}\norm{3^{-2n}w^{n}}_{L^{\infty}(G_{n})}\leq \lim_{n\rightarrow\infty} \max\left\{\mu(G_{n}, \om)^{1/(d+1)}, \mu^{*}(G_{n}, \om)^{1/(d+1)}\right\}=0,
\end{equation}
as desired.

\section{A Rate of Decay on the Second Moments}\label{sec:decaym}
In this section, we obtain a rate of decay on the second moments of $\mu$. {The approach of this section closely follows that of \cite{asellip}}. As before, we suppress the dependence on $M$. We simplify the notation by adopting the following conventions. Let
\begin{equation*}
E_{n}(\ell)=\EE[\mu(G_{n}, \om, \ell)]\quad\text{and}\quad E^{*}_{n}(\ell)=\EE[\mu^{*}(G_{n}, \om, \ell)]=\EE[\mu(G_{n}, \om^{*}, -\ell)].
\end{equation*}
Also, let 
\begin{equation*}
J_{n}(\ell)=\EE[\mu(G_{n}, \om, \ell)^{2}]\quad\text{and}\quad J^{*}_{n}(\ell)=\EE[\mu^{*}(G_{n}, \om, \ell)^{2}]=\EE[\mu(G_{n}, \om^{*}, -\ell)^{2}].
\end{equation*}

{ Our next lemma shows that if the variance of $\mu$ and $\mu^*$ are not decaying, then their expectations must be close to zero.}
The proof resembles the argument for Lemma \ref{both0}, but avoids the dependence on $K_{0}$. 
\begin{lem}\label{decaylem}
Suppose that there exists $m,n\in \mathbb{N}$ and $\eta, \ga>0$ such that 
\begin{equation}\label{notenough}
0< J_{m}(\ell-\ga)\leq (1+\eta)E^{2}_{m+n}(\ell-\ga),
\end{equation}
and
\begin{equation}\label{notenough*}
0< J^{*}_{m}(-\ell+\ga) \leq (1+\eta)E^{*2}_{m+n}(-\ell+\ga).
\end{equation}
Then there exists $n_{0}=n_{0}(\la, \La, d)$ and $\eta_{0}=\eta_{0}(\la, \La, d)$ so that for all $n\geq n_{0}$ and for all $\eta\leq \eta_{0}$, 
\begin{equation}\label{seriousdecay}
J_{m+n}(\ell-\ga)+J^{*}_{m+n}(-\ell+\ga)\leq C \ga^{2(d+1)}.
\end{equation}
\end{lem}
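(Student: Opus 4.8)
The plan is to argue by contradiction, mirroring the structure of the proof of Lemma \ref{both0} but in a quantitative, $K_0$-free way, using Theorem \ref{growth} to extract strictly convex quasi-maximizers and then playing the resulting lower bound against an upper bound coming from the Harnack inequality. First I would normalize and set $\ell' = \ell - \ga$, and observe that \pref{notenough} together with a Chebyshev/Paley--Zygmund-type argument (exactly as in \cite{asellip}) forces the existence, with uniformly positive probability, of a scale $m$ and a mesoscopic cube $G_{m+n}$ on which
\begin{equation*}
\tfrac12 E_{m+n}(\ell') \leq \frac{|\mathcal P(G_m(x,t); \Ga^u)|}{|G_m|} \leq \mu(G_m, \om, \ell') \leq 2 E_{m+n}(\ell')
\end{equation*}
for all $(x,t) \in G_{m+n}$, and similarly for $\mu^*$ with $-\ell' = -\ell+\ga$; here the smallness of $\eta$ is what makes the sandwich tight enough for Theorem \ref{growth} to apply after rescaling (dividing by $E_{m+n}(\ell')$ so the middle term sits between $1$ and $1+3^{n(d+2)}$-type errors). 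Let $\alpha := E_{m+n}(\ell')$ and $\alpha^* := E^*_{m+n}(-\ell')$; the goal \pref{seriousdecay} will follow once we show $\alpha$ and $\alpha^*$ are both bounded by a constant multiple of $\ga^{d+1}$, since $J_{m+n} \leq (\sup \mu)^2 \cdot (\text{something}) $ can be controlled, or more directly since \pref{notenough}--\pref{notenough*} bound $J_{m+n}$ in terms of $E_{m+n}^2$.

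Next I would apply the rescaled version of Theorem \ref{growth}, namely \pref{scaledlb}, to produce supersolutions $u$ of $u_t + F(D^2 u, x, t, \om) = \ell'$ and subsolutions $u^*$ of the $\om^*$-equation with right-hand side $-\ell'$, each normalized so its infimum over the relevant cube is $0$ attained at an interior point of $G_{m+n}(0, 3^{2(m+n+1)})$, and each bending up by at least $c\,\alpha^{1/(d+1)} 3^{2(m+n)}$ (resp.\ $c\,(\alpha^*)^{1/(d+1)} 3^{2(m+n)}$) on the outer parabolic boundary. Adding $w = u + u^*$ and using uniform ellipticity gives $w_t + \MM^+(D^2 w) \geq u_t + u^*_t + F(D^2 u,\cdot,\om) + F(D^2 u^*,\cdot,\om^*) = 0$ on the cube (the cross terms cancel by the definition of $\om^*$), while on the boundary $w \gtrsim (\alpha^{1/(d+1)} + (\alpha^*)^{1/(d+1)}) 3^{2(m+n)}$. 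The Alexandrov--Bakelman--Pucci--Krylov--Tso estimate then propagates this lower bound to the interior: $w \gtrsim (\alpha^{1/(d+1)} + (\alpha^*)^{1/(d+1)}) 3^{2(m+n)}$ throughout.

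For the competing upper bound I would \emph{not} use the Harnack inequality with a $K_0$-dependent constant as in Lemma \ref{both0} — that is precisely the dependence we must avoid. Instead, the key point (and the main obstacle) is to extract an upper bound on $w$ at some interior scale that depends only on $\la, \La, d$ and on $\ga$. The device for this is Lemma \ref{lem:lip}: the difference $\mu(Q,\om,\ell',M) - \mu(Q,\om,\ell,M)$ is controlled linearly by $\ga$, so at the "critical" level $\overline{\ell}$ where $E$ and $E^*$ are forced to vanish (Lemma \ref{both0}), the nearby levels $\ell' = \ell - \ga$ differ only by $O(\ga)$; hence $\alpha, \alpha^* = O(\ga^{d+1})$ would follow if $\ell$ is within $O(\ga)$ of $\overline\ell$, and otherwise one runs the contradiction machine. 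Concretely: solve the localized problems with the normalized supersolution $u$ on $G_s(0, 3^{2(m+n+1)})$ for $s$ the smallest integer with $\rho^2 3^s \geq \sqrt d$; since $\inf u = 0$ on $G_0$ and the equation has right-hand side $\ell'$, the Harnack inequality gives $\sup_{\tilde Q} u \leq C(\la,\La,d)(0 + \|\ell'\|_{L^{d+1}}) \cdot 3^{2s}$ on a subcube — and now $\|\ell'\|$ is controlled by $|\ell'|$, which near the critical level is $O(\ga + \overline\ell)$; the subtlety is to absorb $\overline\ell$ using its own representation/smallness. Comparing $C(\la,\La,d)(|\ell'|+1) 3^{2s}$ against the lower bound $c(\alpha^{1/(d+1)}+(\alpha^*)^{1/(d+1)}) 3^{2(m+n)}$ and choosing $n$ large (so $3^{2(m+n)} \gg 3^{2s}$) forces $\alpha^{1/(d+1)} + (\alpha^*)^{1/(d+1)} \leq C \ga$, i.e. $E_{m+n}(\ell') + E^*_{m+n}(-\ell') \leq C\ga^{d+1}$. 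Feeding this back into \pref{notenough}--\pref{notenough*} with $\eta \leq \eta_0$ yields $J_{m+n}(\ell') + J^*_{m+n}(-\ell') \leq (1+\eta)(E^2 + E^{*2}) \leq C\ga^{2(d+1)}$, which is \pref{seriousdecay}. The step I expect to fight hardest with is making the Harnack-derived upper bound genuinely independent of $K_0$: this is where the interplay with Lemma \ref{lem:lip} and the choice of the base level $\ell$ relative to $\overline{\ell}(M)$ has to be executed carefully, choosing $\ga$ and $n_0, \eta_0$ in the right order.
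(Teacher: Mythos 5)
Your overall architecture is right and matches the paper's: normalize, use Chebyshev and a union bound on \pref{notenough}--\pref{notenough*} to find a single environment $\om$ on which the sandwich conditions hold, apply Theorem \ref{growth} (rescaled) to produce $u,u^*$ with the boundary-bending estimate, add $u+u^*$ and use the Alexandrov--Bakelman--Pucci--Krylov--Tso estimate for the interior lower bound, then play this against a Harnack-type upper bound at a fixed small scale $s$ and let $n$ grow. But there is a genuine gap at the step you yourself flag as the hardest: your proposal does not actually supply a $K_0$-independent upper bound.

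First, Lemma \ref{lem:lip} cannot be the device that removes the $K_0$-dependence: its constant is explicitly $C(\la,\La,d,M,K_0)$, so invoking it leaves you exactly where you started. Second, the line where you apply the Harnack inequality directly to $u$ --- with $\norm{f}_{L^{d+1}}$ coming from the right-hand side of $u_t + F(D^2 u) = \ell'$ --- also fails, because to pass from $F$ to the Pucci sandwich you must estimate $|F(0,\cdot,\cdot)|\le K_0$, so the forcing in the Harnack inequality is controlled by $|\ell'|+K_0$, not by $|\ell'|$ alone; this is precisely what Lemma \ref{both0} does and what you said you must avoid. The paper's actual device, which is absent from your proposal, is to solve auxiliary problems $w,w^*$ on $G_s(0,3^{2(n+1)})$ with the \emph{same} operator and right-hand side but with \emph{zero} parabolic boundary data, observe $w+w^*\le 0$ by comparison, and then apply the Harnack inequality to the nonnegative function $u-w$. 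The point is that $u-w$ satisfies $(w-u)_t+\MM^+(D^2(w-u))\ge 0\ge (w-u)_t+\MM^-(D^2(w-u))$ with \emph{no} forcing term --- the $F$-dependence cancels in the difference --- so the Harnack constant is $C(\la,\La,d)$ only. That zero-boundary-data trick, combined with Lemma \ref{muabp} applied to $w$ (giving $\mu(G_s,\om,-\ga)$ large) and the sandwich bound $\mu(G_s,\om,-\ga)\le(1+\delta)E_n(-\ga)$, is what produces the scale-separated inequality \pref{somerelation} and closes the argument; your proposal, as written, has no $K_0$-free mechanism to replace it.
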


\begin{proof}
Without loss of generality, we assume that $\ell=0$, $m=0$. First, we claim that there exists a choice of environment $\om$ such that $\mu(G_{n}, \om)$ and $\mu(G_{0}(x,t), \om)$ is approximately constant for all $(x,t)\in G_{n}$. 

Fix $\delta>0$. There exists $\eta=\eta(\delta)$ such that if \pref{notenough} and \pref{notenough*} hold for this $\eta$, there exists an $\om$ so that for all $(x,t)\in G_{n}$, 
\begin{equation}\label{const_sand1}
(1-\delta)E_{n}(-\ga) \leq \mu(G_{n}, \om, -\ga)\leq \mu(G_{0}(x,t), \om, -\ga)\leq (1+\delta)E_{n}(-\ga),
\end{equation}
and similarly for the lower quantity, 
\begin{equation}\label{const_sand2}
(1-\delta)E^{*}_{n}(\ga) \leq \mu^{*}(G_{n}, \om, \ga)\leq \mu^{*}(G_{0}(x,t), \om, \ga)\leq (1+\delta)E^{*}_{n}(\ga).
\end{equation}

Applying Chebyshev's inequality, we have that for any $(x,t)\in G_{n}$, 
\begin{align*}
&\PP[\mu(G_{0}(x,t), \om, -\ga)\geq (1+\delta)E_{n}(-\ga)]\\
&\leq \PP[\mu(G_{0}(x,t), \om, -\ga)-E_{n}(-\ga)\geq \delta E_{n}(-\ga)]\\
&\leq \PP[[\mu(G_{0}(x,t), \om, -\ga)-E_{n}(-\ga)]^{2}\geq \delta^{2}E_{n}^{2}(-\ga)]\\
&\leq \frac{1}{\delta^{2}E_{n}^{2}(-\ga)} \EE[[\mu(G_{0}(x,t), \om, -\ga)-E_{n}(-\ga)]^{2}]\\
&\leq \frac{1}{\delta^{2}E_{n}^{2}(-\ga)}\left[J_{0}(-\ga)-E_{n}^{2}(-\ga)\right]\\
&\leq \eta \delta^{-2},
\end{align*}
where the last inequality follows from \pref{notenough}.

Similarly, 
\begin{align*}
&\PP[\mu(G_{n}, \om, -\ga)<(1-\delta)E_{n}(-\ga)]\\
&\leq \PP[(\mu(G_{n}, \om, -\ga)-E_{n}(-\ga))^{2}\geq \delta^{2} E_{n}(-\ga)^{2}]\\
&\leq\frac{1}{\delta^{2}E_{n}(-\ga)^{2}} \EE[(\mu(G_{n}, \om, -\ga)-E_{n}(-\ga))^{2}]\\
&\leq \frac{1}{\delta^{2}E_{n}(-\ga)^{2}}\left( \EE[\mu(G_{n}, \om, -\ga)^{2}]-E_{n}(-\ga)^{2}\right)\\
&\leq \eta \delta^{-2}.
\end{align*}

By identical arguments, 
\begin{equation*}
\PP[\mu^{*}(G_{0}(x,t), \om, \ga)\geq (1+\delta)E^{*}_{n}(\ga)]\leq \eta \delta^{-2},
\end{equation*}
and
\begin{equation*}
\PP[\mu^{*}(G_{n}, \om, \ga)<(1-\delta)E^{*}_{n}(\ga)]\leq \eta \delta^{-2}.
\end{equation*}
By a union bound, this implies that
\begin{equation}
\PP\left[\pref{const_sand1}, \pref{const_sand2}\quad\text{hold for all $(x,t)\in G_{n}$}\right]\geq 1-4\eta\delta^{-2},
\end{equation}
so by choosing $\eta\leq \frac{1}{4}\delta^{2}$, this has positive probability. Let $\om\in \Om$ be an element of this set, which implies $\om$ satisfies \pref{const_sand1} and \pref{const_sand2} for all $(x,t)\in G_{n}$. Using this particular $\om$, we next show that there exist constants $c,C$ and $s\in \NN$ which only depend on $\la, \La, d$ so that 
\begin{equation}\label{somerelation}
c\left[E_{n}(-\ga)+E^{*}_{n}(\ga)-C\ga^{d+1}\right]\leq (1+\delta)3^{-2(n-s)(d+1)}\left[E_{n}(-\ga)+E^{*}_{n}(\ga)\right].
\end{equation}

Consider that by Theorem \ref{growth}, similar to the proof of Lemma \ref{both0}, there exists $n=n(d, \la, \La)$ and $u, u^{*}\in C(G_{n}(0, 3^{2(n+1)}))$ so that 
\begin{equation*}
u_{t}+F(D^{2}u, x, t,\om)=-\ga\quad\text{in}\quad G_{n}(0, 3^{2(n+1)}),
\end{equation*}
with 
\begin{equation*}
\inf_{\partial_{p}G_{n}(0, 3^{2(n+1)})\cap \left\{t\leq t_{0}\right\}}u(x,t)\geq C3^{2n}E_{n}(-\ga)^{1/(d+1)},
\end{equation*}
and
\begin{equation*}
\inf_{G_{0}(0, 3^{2(n+1)})}u=\inf_{G_{n}(0, 3^{2(n+1)})} u=0.
\end{equation*}

Similarly, $u^{*}$ satisfies 
\begin{equation*}
u^{*}_{t}+F(D^{2}u^{*}, x, t, \om^{*})=-\ga\quad\text{in}\quad G_{n}(0, 3^{2(n+1)}),
\end{equation*} 
with
\begin{equation*}
\inf_{\partial_{p}G_{n}(0, 3^{2(n+1)})\cap \left\{t\leq t^{*}_{0}\right\}} u^{*}(x,t) \geq C3^{2n}E^{*}_{n}(\ga)^{1/(d+1)}
\end{equation*}
and
\begin{equation*}
\inf_{G_{0}(0, 3^{2(n+1)})} u^{*}=\inf_{G_{n}(0, 3^{2(n+1)})} u^{*}=0.
\end{equation*}
Let $\tilde{t}=\min\left\{t_{0}, t^{*}_{0}\right\}$. We note that the function $u+u^{*}$ satisfies that 
\begin{equation*}
u+u^{*}\geq C 3^{2n}(E_{n}(-\ga)^{1/(d+1)}+E^{*}_{n}(\ga)^{1/(d+1)})\quad\text{on}\quad \partial_{p}G_{n}(0, 3^{2(n+1)})\cap \left\{t\leq \tilde{t}\right\},
\end{equation*}
and 
\begin{equation*}
(u+u^{*})_{t}+\MM^{+}(D^{2}(u+u^{*}))\geq -2\ga\quad\text{in}\quad G_{n}(0, 3^{2(n+1)}).
\end{equation*}

By the Alexandrov-Backelman-Pucci-Krylov-Tso estimate \cite{wangreg1, cyriluis}, this implies that 
\begin{align}\label{positivesum}
u+u^{*}\geq c 3^{2n}\left[E_{n}(-\ga)^{1/(d+1)}+E^{*}_{n}(\ga)^{1/(d+1)}\right]&-C3^{2n}\ga\\
&\text{in}\quad G_{n}(0, 3^{2(n+1)})\cap \left\{t\leq \tilde{t}\right\}\notag.
\end{align}

Next, consider the solutions $w, \tilde{w}$ solving
\begin{equation*}
\begin{cases}
w_{t}+F(D^{2}w, x, t, \om)=-\ga & \text{in}\quad G_{s}(0, 3^{2(n+1)}),\\
w=0 & \text{on}\quad \partial_{p}G_{s}(0, 3^{2(n+1)}),
\end{cases}
\end{equation*}
and 
\begin{equation*}
\begin{cases}
w^{*}_{t}+F(D^{2}w^{*}, x, t, \om^{*})=-\ga & \text{in}\quad  G_{s}(0, 3^{2(n+1)}),\\
w^{*}=0 & \text{on}\quad \partial_{p}G_{s}(0, 3^{2(n+1)}),
\end{cases}
\end{equation*}
with $s$ to be chosen such that $s\leq n$. 

We have that 
\begin{equation*}
w+w^{*}=0\quad\text{on}\quad \partial_{p}G_{s}(0, 3^{2(n+1)}),
\end{equation*}
and 
\begin{equation*}
(w+w^{*})_{t}+\MM^{-}(D^{2}(w+w^{*}))\leq -2\ga\leq 0\quad\text{in}\quad G_{s}(0, 3^{2(n+1)}).
\end{equation*}
This implies that
\begin{equation}\label{negativesum}
w+w^{*}\leq 0\quad\text{in}\quad G_{s}(0, 3^{2(n+1)}).
\end{equation}

Combining \pref{positivesum} and \pref{negativesum}, we have that for all $(x,t)\in G_{s}(0, 3^{2(n+1)})\cap \left\{t\leq \overline{t}\right\}$, 
\begin{equation}\label{seriousbnd}
w(x,t)-u(x,t)+w^{*}(x,t)-u^{*}(x,t)\leq C3^{2n}\ga-c 3^{2n}(E_{n}(-\ga)^{1/(d+1)}+E^{*}_{n}(\ga)^{1/(d+1)}).\end{equation}

Notice that   
\begin{equation*}
w-u\leq 0\quad\text{on}\quad\partial_{p}G_{s}(0, 3^{2(n+1)}),
\end{equation*}
and in $G_{s}(0, 3^{2(n+1)})$, 
\begin{equation*}
(w-u)_{t}+\MM^{+}(D^{2}(w-u))\geq 0\geq (w-u)_{t}+\MM^{-}(D^{2}(w-u)).
\end{equation*}

This implies that $w-u\leq 0$ in $G_{s}(0, 3^{2(n+1)})$. Consider the Harnack inequality (Theorem \ref{harnack}) applied to $u-w\geq 0$. By the Harnack inequality rescaled in $G_{s}(0, 3^{2(n+1)})$, (where $\tilde{Q}$ corresponds to the rescaled $\tilde{Q}$), 
\begin{equation*}
\sup_{\tilde{Q}} (u-w)\leq C \inf_{Q_{\rho^{2}3^{s}(0, 3^{2(n+1)})}} (u-w).
\end{equation*}
This implies that 
\begin{equation*}
-\sup_{\tilde{Q}} (u-w)\geq -C\inf_{Q_{\rho^{2}3^{s}(0, 3^{2(n+1)})}} (u-w),
\end{equation*}
which yields
\begin{equation}\label{reverse}
\inf_{\tilde{Q}} (w-u)\geq C \sup_{Q_{\rho^{2}3^{s}(0, 3^{2(n+1)})}} (w-u).
\end{equation}

Choose $s$ so that $G_{0}(0, 3^{2(m+1)})\subseteq Q_{\rho^{2}3^{s}}(0, 3^{2(m+1)})$. Since \pref{seriousbnd} holds for all $(x,t)\in G_{s}(0, 3^{2(n+1)})\cap\left\{t\leq \tilde{t}\right\}$ and $\tilde{Q}\subseteq G_{s}(0, 3^{2(n+1)})\cap\left\{t\leq \tilde{t}\right\}$, we may assume without loss of generality that 
\begin{equation*}
\inf_{\tilde{Q}} (w-u)\leq \frac{1}{2}[C3^{2n}\ga-c 3^{2n}(E_{n}(-\ga)^{1/(d+1)}+E^{*}_{n}(\ga)^{1/(d+1)})].
\end{equation*}
 (If not, then we repeat this analysis for $w^{*}-u^{*}$.) By \pref{reverse}, this implies that in $Q_{\rho^{2}3^{s}}(0, 3^{2(n+1)})$, 
\begin{equation*}
w-u\leq C[3^{2n}\ga-c 3^{2n}(E_{n}(-\ga)^{1/(d+1)}+E^{*}_{n}(\ga)^{1/(d+1)})].
\end{equation*}

In particular, we have that 
\begin{align*}
\inf_{Q_{\rho^{2}3^{s}}(0, 3^{2(n+1)})} w\leq \inf&_{Q_{\rho^{2}3^{s}}(0, 3^{2(n+1)})} u\\
&+c\left[C3^{2n}\ga-3^{2n}(E_{n}(-\ga)^{1/(d+1)}+E^{*}_{n}(\ga)^{1/(d+1)})\right].
\end{align*}

Since $(x_{0}, t_{0})\in G_{0}(0, 3^{2(n+1)})\subseteq Q_{\rho^{2}3^{s}}(0, 3^{2(n+1)})$, this implies that 
\begin{equation*}
\inf_{Q_{\rho^{2}3^{s}}(0, 3^{2(n+1)})} u=0, 
\end{equation*}
which yields
\begin{equation}\label{whoa}
\inf_{Q_{\rho^{2}3^{s}}(0, 3^{2(n+1)})} w\leq c\left[C3^{2n}\ga-3^{2n}(E_{n}(-\ga)^{1/(d+1)}+E^{*}_{n}(\ga)^{1/(d+1)})\right].
\end{equation}

By Lemma \ref{muabp}, since $w=0$ on $\partial_{p}G_{s}(0, 3^{2(n+1)})$, 
\begin{align*}
0&\leq \inf_{G_{s}(0, 3^{2(n+1)})} w+\c[ptf]3^{2s}\mu(G_{s}(0, 3^{2(n+1)}), \om, -\ga)^{1/(d+1)}\\
&\leq \inf_{Q_{\rho^{2}3^{s}}(0, 3^{2(n+1)})} w+\c[ptf]3^{2s}\mu(G_{s}(0, 3^{2(n+1)}), \om, -\ga)^{1/(d+1)}.
\end{align*}
By \pref{whoa}, this implies  
\begin{align*}
c3^{2(n-s)(d+1)}[E_{n}(-\ga)^{1/(d+1)}+&E^{*}_{n}(\ga)^{1/(d+1)}-C\ga]^{d+1}\leq \mu(G_{s}(0, 3^{2(n+1)}), \om, -\ga)\\
&\leq \aint_{G_{s}(0, 3^{2(n+1)})}\mu(G_{0}(x,t), \om)\,dxdt\\
&\leq (1+\delta)E_{n}(-\ga)\leq (1+\delta)[E_{n}(-\ga)+E_{n}^{*}(\ga)].
\end{align*}

This yields
\begin{align*}
3^{2(n-s)(d+1)}c(E_{n}(-\ga)+E^{*}_{n}(\ga)-C\ga^{d+1})\leq (1+\delta)\left[E_{n}(-\ga)+E^{*}_{n}(\ga)\right],
\end{align*}
which is equivalent to \pref{somerelation}.

To conclude, we just need to choose $\delta$, $\eta$ and show there is an $n$ sufficiently large to obtain \pref{seriousdecay}.

Rearranging yields 
\begin{equation*}
[1-3^{-2(n-s)(d+1)}-\delta 3^{-2(n-s)(d+1)}][E_{n}(-\ga)+E^{*}_{n}(\ga)]\leq C \ga^{d+1}.
\end{equation*}
Choosing $\delta:= 3^{-2s(d+1)}$, and $\eta\leq \frac{1}{4}3^{-4s(d+1)}$ yields a choice of $\om\in \Om$ such that \pref{const_sand1} and \pref{const_sand2} hold, and
\begin{equation*}
[1-3^{-2(n-s)(d+1)}-3^{-2n(d+1)}][E_{n}(-\ga)+E^{*}_{n}(\ga)]\leq C \ga^{d+1}.
\end{equation*}

For any $n\geq 2s$, we have that 
\begin{equation*}
E_{n}(-\ga)+E^{*}_{n}(\ga)\leq C[1-3^{-2s(d+1)}-3^{-4s(d+1)}]^{-1}\ga^{d+1}=C\ga^{d+1}.
\end{equation*}
This implies that 
\begin{equation*}
J_{n}(-\ga)+J^{*}_{n}(\ga)\leq (1+\eta)\left[E_{n}(-\ga)^{2}+E^{*}_{n}(\ga)^{2}\right]\leq C\ga^{2(d+1)}
\end{equation*}
as asserted.
\end{proof}

{We next show how the finite range of dependence assumption \pref{f1'} yields a relation between $J_{m+n}(\ell)$ and $J_{m}(\ell)$ for $n>0$. }
{
\begin{lem}\label{lem:iid}
There exists $\c[mixdecay]=\c[mixdecay](d)$ such that for any $\ell$, and for any $m, n\geq 0$, 
\begin{equation}\label{vardecay}
J_{m+n}(\ell)\leq E_{m}^{2}+\frac{\c[mixdecay]}{3^{n(d+2)}}J_{m}(\ell).
\end{equation}
Similarly, 
\begin{equation}\label{vardecay*}
J^{*}_{m+n}(-\ell)\leq E^{*2}_{m}+\frac{\c[mixdecay]}{3^{n(d+2)}}J^{*}_{m}(-\ell).
\end{equation}
\end{lem}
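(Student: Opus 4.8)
\end{lem}

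\begin{proof}
The plan is to combine the subadditive decomposition of $\mu$ (Lemma \ref{lemdecomp}) with the finite range of dependence \pref{f1'}, exactly in the style of the standard ``subadditivity plus independence'' variance bound. Fix $m,n\geq 0$ and let $G^{(1)},\dots,G^{(K)}$, with $K:=3^{n(d+2)}$, be the translates of $G_m$ that tile $G_{m+n}$, and set $X_i:=\mu(G^{(i)},\om,\ell)$. Lemma \ref{lemdecomp} holds verbatim with a general right-hand side $\ell$ (its only input is the regularity of $\Ga^u$---$C^{1,1}$ in space, Lipschitz in time---supplied by Lemma \ref{mereg} and Lemma \ref{area}), so since the integrand is constant on each grid cell,
\[
\mu(G_{m+n},\om,\ell)\;\leq\;\aint_{G_{m+n}}\mu(G_m(x,t),\om,\ell)\,dx\,dt\;=\;\frac{1}{K}\sum_{i=1}^{K}X_i.
\]
Squaring and taking expectations gives $J_{m+n}(\ell)\leq K^{-2}\sum_{i,j}\EE[X_iX_j]$, and the task reduces to estimating the cross terms.

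Next I would split the double sum into diagonal terms, \emph{separated} pairs, and \emph{adjacent} pairs. Each $X_i$ is $\mathcal{B}(G^{(i)})$-measurable because $\mu(Q,\cdot,\ell)$ depends only on the operators $F(\cdot,y,s,\cdot)$ with $(y,s)\in Q$. The key geometric point is that $m\geq 0$ forces $3^m\geq 1$: if the cell indices of $G^{(i)}$ and $G^{(j)}$ differ by at least $2$ in some coordinate, then the corresponding separation in that slot satisfies $|x_1-x_2|\geq 3^m$ (a space slot) or $|t_1-t_2|\geq 3^{2m}$ (the time slot), so in the parabolic metric $d[G^{(i)},G^{(j)}]\geq 3^m\geq 1$; hence $X_i$ and $X_j$ are $\PP$-independent by \pref{f1'}, and $\EE[X_iX_j]=E_m(\ell)^2$ by stationarity. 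The remaining off-diagonal pairs have indices differing by at most $1$ in every coordinate; there are at most $(3^{d+1}-1)K$ of them, and for each, Cauchy--Schwarz together with stationarity gives $\EE[X_iX_j]\leq\EE[X_i^2]^{1/2}\EE[X_j^2]^{1/2}=J_m(\ell)$. The $K$ diagonal terms each equal $J_m(\ell)$ as well.

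Adding these contributions, and using $E_m(\ell)^2\geq 0$ to bound the number of separated pairs crudely by $K^2$, one gets $\sum_{i,j}\EE[X_iX_j]\leq K^2E_m(\ell)^2+3^{d+1}K\,J_m(\ell)$; dividing by $K^2=3^{2n(d+2)}$ yields
\[
J_{m+n}(\ell)\;\leq\;E_m(\ell)^2+\frac{3^{d+1}}{3^{n(d+2)}}J_m(\ell),
\]
which is \pref{vardecay} with $\c[mixdecay]=3^{d+1}$. The estimate \pref{vardecay*} then follows by running the identical argument for the pushforward measure $\pi_{\#}\PP$ and the operator family $\{F(\cdot,\cdot,\cdot,\om^{*})\}$, which is again stationary with finite range of dependence, using $\mu^{*}(Q,\om,\ell)=\mu(Q,\om^{*},-\ell)$.

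The only delicate point is the geometric bookkeeping: checking that a one-cell gap really yields separation $\geq 1$ in the parabolic metric $d[(x_1,t_1),(x_2,t_2)]=(|x_1-x_2|^2+|t_1-t_2|)^{1/2}$---this is precisely where the hypothesis $m\geq 0$ enters, and where the unsquared time variable must be handled---and counting the adjacent pairs as $O(K)$ with a dimensional constant. Everything else is a routine expansion; in particular there is no need to be careful with the adjacent terms, since absorbing an extra factor $3^{d+1}$ there is harmless.
\end{proof}
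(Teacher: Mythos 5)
Your proof is correct and follows essentially the same route as the paper: decompose $G_{m+n}$ into the $K = 3^{n(d+2)}$ translates of $G_m$, bound $\mu(G_{m+n},\om,\ell)$ by the average of the cell-wise $\mu$'s, square, expand, and split the cross terms into separated pairs (independence plus stationarity gives $E_m^2$) and adjacent pairs (Cauchy--Schwarz and stationarity give $J_m$), with the $3^{d+1}$ adjacency count supplying $\c[mixdecay]$. Your invocation of Lemma \ref{lemdecomp} up front is a slightly cleaner packaging of what the paper does by expanding $\bigl(\sum_i |\mathcal{P}(G_m^i;u)|\bigr)^2$ directly and then passing to $\mu$, and your observation that the lemma carries over verbatim to general $\ell$ (via Lemma \ref{area} applied to $u_t+\MM^+(D^2u)\geq \ell - K_0(1+|M|)$) is correct.
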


\begin{proof}
Since $\ell$ plays no role, we suppress its dependence. Consider that $G_{m+n}=\bigcup_{i=1}^{3^{n(d+2)}} G_{m}^{i}$ for some choice of enumeration of cubes $\left\{G_{m}^{i}\right\}$. Therefore, for each $u\in S(G_{m+n}, \om)$, 
\begin{align*}
|\mathcal{P}(G_{m+n}; u)|^{2}&= \left(\sum_{i=1}^{3^{n(d+2)}} \left|\mathcal{P}(G^{i}_{m}; u)\right|\right)^{2}\\
&=\sum_{i} |\mathcal{P}(G^{i}_{m}; u)|^{2}+\sum_{i}\sum_{j\neq i}|\mathcal{P}(G^{i}_{m}; u)||\mathcal{P}(G^{j}_{m}; u)|\\
&=\sum_{i=1}^{3^{n(d+2)}} |\mathcal{P}(G^{i}_{m}; u)|^{2}+\sum_{i=1}^{3^{n(d+2)}} \left[\sum_{d[G^{i}_{m}, G^{j}_{m}]>1}|\mathcal{P}(G^{i}_{m}; u)||\mathcal{P}(G^{j}_{m}; u)|\right.\\
&\left.+\sum_{d[G^{i}_{m}, G^{j}_{m}]\leq1}|\mathcal{P}(G^{i}_{m}; u)||\mathcal{P}(G^{j}_{m}; u)|\right].
\end{align*}

This implies that 
\begin{align*}
\mu(G_{m+n}, \om)^{2}&\leq \frac{1}{3^{2n(d+2)}} \sum_{i=1}^{3^{n(d+2)}} (\mu(G^{i}_{m}, u))^{2}\\
&+\frac{1}{3^{2n(d+2)}}\sum_{i=1}^{3^{n(d+2)}} \left[\sum_{d[G^{i}_{m}, G^{j}_{m}]>1}\mu(G^{i}_{m}, \om)\mu(G^{j}_{m}, \om)\right.\\
&\left.+\sum_{d[G^{i}_{m}, G^{j}_{m}]\leq 1}\mu(G^{i}_{m}, \om)\mu(G^{j}_{m}, \om)\right].
\end{align*}

For each $i$ fixed, if $d[G^{i}_{m}, G^{j}_{m}]>1$, by \pref{iid}, stationarity, and Lemma \ref{bnds},
\begin{align*}
\EE[\mu(G^{i}_{m}, \om)\mu(G^{j}_{m}, \om)]&= E_{m}^{2}.
\end{align*}
If $d[G^{i}_{m}, G^{j}_{m}]\leq 1$, then by the Cauchy-Schwartz inequality and stationarity, 
\begin{equation*}
\EE[\mu(G^{i}_{m}, \om)\mu(G^{j}_{m}, \om)]\leq \EE[\mu(G_{m}, \om)^{2}]=J_{m}. 
\end{equation*}
For any fixed $i$, the number of cubes so that $d[G^{i}_{m}, G^{j}_{m}]\leq 1$ is at most $3^{d+1}$. Therefore, after taking expectation of both sides, summing over $i=1, \ldots, 3^{n(d+2)}$ copies, this yields that
\begin{align*}
J_{m+n} &\leq \frac{1}{3^{n(d+2)}}\left[J_{m}+(3^{n(d+2)}-3^{d+1})E_{m}^{2}+3^{d+1}J_{m}\right]\\
&\leq E_{m}^{2}+\frac{C}{3^{n(d+2)}}J_{m}. 
\end{align*}
\end{proof}}

{ Our next lemma shows that, but perturbing $\ell$, we can make $E$ and $E^*$ positive.}

\begin{lem}\label{lem:strictpos}
Let $\ell$ so that 
\begin{equation*}
E(\ell)=\lim_{n\rightarrow\infty}\EE[\mu(G_{n}, \om, \ell)]=\lim_{n\rightarrow\infty} \EE[\mu^{*}(G_{n}, \om, \ell)]=E^{*}(\ell).
\end{equation*}
There exists $\c[strictpos]=\c[strictpos](d, \la, \La)$ so that for any $\ga>0$, for any $n$, 
\begin{equation}
\EE[\mu(G_{n}, \om, \ell-\ga)]\geq \c[strictpos] \ga^{d+1}.
\end{equation}
Analogously, 
\begin{equation}
\EE[\mu^{*}(G_{n}, \om, -\ell+\ga)]=\EE[\mu(G_{n}, \om^{*}, \ell-\ga)]\geq \c[strictpos]\ga^{d+1}.
\end{equation}
\end{lem}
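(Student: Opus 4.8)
The plan is to produce, for every large $n$, a single function $v\in S(G_n,\om,\ell-\ga,M)$ whose monotone envelope satisfies $|\mathcal{P}(G_n;\Ga^v)|\geq c\,\ga^{d+1}|G_n|$ with $c=c(\la,\La,d)$, then pass to the limit and use that $n\mapsto\EE[\mu(G_n,\om,\ell-\ga)]$ is nonincreasing. The reason for taking $v$ to be a perturbation of an approximate corrector, rather than a direct barrier, is precisely to keep the constant free of $K_0$ and $M$. First I would note that the hypothesis $E(\ell)=E^*(\ell)$ and Lemma \ref{both0} force $E(\ell)=E^*(\ell)=0$; hence, exactly as in Section \ref{sec:qualstuff} (Corollary \ref{setok} and the subadditive ergodic theorem applied at the level $\ell$), $\mu(G_n,\om,\ell)\to 0$ and $\mu^*(G_n,\om,\ell)\to 0$ $\PP$-a.s., and by Lemma \ref{muabp} applied to $\pm w^n$ the solutions $w^n$ of
\begin{equation*}
w^n_t+F(M+D^2w^n,x,t,\om)=\ell\ \text{ in }G_n,\qquad w^n=0\ \text{ on }\partial_p G_n
\end{equation*}
obey $\|3^{-2n}w^n\|_{L^\infty(G_n)}\to 0$ $\PP$-a.s., just as in \pref{quaddecay}.

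Next, fix $\ga>0$ and set $\psi(x,t):=-\tfrac{\ga}{2}t+\tfrac{\ga}{4d\La}|x|^2$, so that $\psi_t+\MM^-(D^2\psi)=-\ga$, and therefore
\begin{equation*}
v:=w^n+\psi\quad\text{satisfies}\quad v_t+F(M+D^2v,x,t,\om)\geq \ell+\psi_t+\MM^-(D^2\psi)=\ell-\ga,
\end{equation*}
i.e. $v\in S(G_n,\om,\ell-\ga,M)$. Since $w^n\equiv 0$ on $\partial_p G_n$ while $\psi$ attains its infimum over $G_n$ at $(0,3^{2n})$ and stays above that value by at least $c(\la,\La,d)\,\ga\,3^{2n}$ on $\partial_p G_n$ (on the bottom face $\psi\geq 0$, on the lateral faces $|x|^2\geq\tfrac14 3^{2n}$ and $t<3^{2n}$), a short computation gives
\begin{equation*}
\inf_{\partial_p G_n}v-\inf_{G_n}v\ \geq\ c(\la,\La,d)\,\ga\,3^{2n}-\|w^n\|_{L^\infty(G_n)}.
\end{equation*}

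Finally, feeding this gap into the hypercone construction from the proof of Lemma \ref{muabp} (rescaled from $G_0$ to $G_n$, which is a purely geometric step) and using $\mathcal{P}(G_n;v)\subseteq\mathcal{P}(G_n;\Ga^v)$, one obtains
\begin{equation*}
\mu(G_n,\om,\ell-\ga,M)\ \geq\ c(d)\Big(c(\la,\La,d)\,\ga-3^{-2n}\|w^n\|_{L^\infty(G_n)}\Big)_+^{d+1}.
\end{equation*}
Letting $n\to\infty$ and using $3^{-2n}\|w^n\|_{L^\infty(G_n)}\to 0$ a.s.\ together with $\mu(G_n,\om,\ell-\ga)\to E(\ell-\ga)$ a.s., I get $E(\ell-\ga)\geq \c[strictpos]\ga^{d+1}$ with $\c[strictpos]=\c[strictpos](d,\la,\La)$; since $n\mapsto\EE[\mu(G_n,\om,\ell-\ga)]$ is nonincreasing (Lemma \ref{lemdecomp} and stationarity), this gives $\EE[\mu(G_n,\om,\ell-\ga)]\geq E(\ell-\ga)\geq \c[strictpos]\ga^{d+1}$ for every $n$. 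The statement for $\mu^*$ follows from the same argument applied to the involuted environment (with $w^n$, which solves the same equation, serving as the corrector for subsolutions). I expect the main obstacle to be the bookkeeping that keeps $\c[strictpos]$ independent of $K_0$ and $M$: this is what forces one to work through the corrector, and it is also what requires checking that the paraboloid $\psi$, although decreasing in time, still bends upward on $\partial_p G_n$ by an amount of order $\ga\,3^{2n}$ which survives the $o(3^{2n})$ perturbation coming from $w^n$.
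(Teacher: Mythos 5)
Your argument is correct and follows essentially the same route as the paper: build the approximate corrector $w^n$ at level $\ell$ (which is small by Lemma \ref{both0}, Lemma \ref{muabp}, and the a.s.\ decay of $\mu$ and $\mu^*$), add a paraboloid $\psi$ that simultaneously bends up by $\sim c(\la,\La,d)\,\ga\,3^{2n}$ on $\partial_p G_n$ and converts the supersolution from level $\ell$ to level $\ell-\ga$, then feed the resulting boundary gap into Lemma \ref{muabp} and use the monotonicity of $n\mapsto\EE[\mu(G_n,\om,\ell-\ga)]$ to pass from the limit $E(\ell-\ga)$ to every $n$. The only cosmetic difference is that you send $n\to\infty$ and use the a.s.\ limit, whereas the paper fixes a single large $N=N(\delta)$, works on a probability-$\tfrac12$ event obtained from Chebyshev, and then lets $\delta\to0$; the substance and the constants are the same.
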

\begin{proof}
First, we observe that by Lemma \ref{both0}, $E(\ell)=0$. By the subadditive ergodic theorem, we choose $N=N(\delta)$ sufficiently large so that $\EE[\mu(G_{N}, \om, \ell)]\leq \delta$. 

Let $w$ solve 
\begin{equation*}
\begin{cases}
w_{t}+F(D^{2}w, x, t, \om)=\ell & \text{in}\quad G_{N},\\
w=0 & \text{on}\quad \partial_{p} G_{N}.
\end{cases}
\end{equation*}
 
 Since $w\in S(G_{N}, \om, \ell)$, by Lemma \ref{muabp},
 \begin{equation*}
 0\leq \inf_{G_{N}} w+\c[ptf]3^{2N} \mu(G_{N}, \om, \ell)^{1/(d+1)},
 \end{equation*}
 which implies that 
 \begin{equation}\label{oiy}
 \PP[w\leq -2^{1/(d+1)}\c[ptf]3^{2N} \delta^{1/(d+1)}]\leq \PP[\mu(G_{N}, \om, \ell)\geq 2\delta]\leq \frac{1}{2}.
 \end{equation}

Let $\tilde{w}:=w-C\ga(\frac{1}{2}|x|^{2}-3^{2N})+\frac{\ga}{2}(3^{2N}-t)$ for $C$ to be chosen. By \pref{oiy},
 \begin{equation*}
\PP[\tilde{w}\geq -2\c[ptf] 3^{2N}\delta^{1/(d+1)}+C\ga 3^{2N}]\geq \frac{1}{2}.
\end{equation*}
 
Next we consider that there exists $C=C(d, \la)$ so that $\tilde{w} \in S(G_{N}, \om, \ell-\ga)$. We verify that 
\begin{align*}
\tilde{w}_{t}+F(D^{2}\tilde{w}, x, t, \om)&=w_{t}-\frac{\ga}{2}+F(D^{2}w-C\ga Id, x, t, \om)\\
&\geq w_{t}-\frac{\ga}{2}+F(D^{2}w, x, t, \om)+\la|C\ga Id|\\
&=\ell-\frac{\ga}{2}+C\la \ga d\geq \ell-\ga
\end{align*}
for $C=C(\la, d)$.
Since $\tilde{w}\geq 0$ on $\partial_{p}G_{N}$, by Lemma \ref{muabp},
\begin{equation*}
\PP[\mu(G_{N}, \om, \ell-\ga)\geq C\ga^{d+1}-C\delta]\geq \frac{1}{2}.
\end{equation*}

Therefore, for all $n\leq N$, 
\begin{equation*}
\EE[\mu(G_{n}, \om, \ell+\ga)]\geq C(\ga^{d+1}-\delta).
\end{equation*}

Sending $\delta\rightarrow 0$, $N(\delta)\rightarrow\infty$ and we have the claim by letting $\c[strictpos]=C$. 
\end{proof}

We are now ready to obtain a rate of decay on the second moments of $\mu$.

\begin{thm}\label{thmmom}
There exists $\tau=\tau(\la, \La, d)\in (0, 1)$ and $\c[mom]=\c[mom](\la, \la, d)$ such that for all $m\in \mathbb{N}$, for each $M\in\mathbb{S}^{d}$, 
\begin{equation}\label{momrate}
J_{m}(\overline{F}(M),M)+J^{*}_{m}(-\overline{F}(M),M)\leq \c[mom](1+|M|)^{2(d+1)}K_{0}^{2(d+1)}\tau^{m}.
\end{equation}
\end{thm}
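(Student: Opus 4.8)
The plan is to iterate Lemma~\ref{lem:iid}, Lemma~\ref{decaylem}, and Lemma~\ref{lem:strictpos} in a bootstrap along the scales $G_m$, exactly as in the corresponding argument of \cite{asellip}. Fix $M$, write $\ell=\overline{F}(M)$, and suppress $M$ from the notation. The goal is to show the sequence $K_m:=J_m(\ell)+J^*_m(-\ell)$ decays geometrically. By Lemma~\ref{lemlim} we have $E(\ell)=E^*(\ell)$, so Lemma~\ref{both0} applies and $E(\ell)=E^*(\ell)=0$; hence $E_m(\ell)\downarrow 0$ and $E^*_m(-\ell)\downarrow 0$. The subtlety is that decay of the \emph{expectations} is only qualitative at this stage, so to get a rate I will perturb the right-hand side by a small $\ga=\ga_m$ depending on the scale and play the two mechanisms against each other.

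The dichotomy is the heart of the argument. Fix $n=n_0$ from Lemma~\ref{decaylem} and $\eta=\eta_0$ from the same lemma, both depending only on $(\la,\La,d)$. Given $m$, consider the two scales $m$ and $m+n$, with a perturbation parameter $\ga>0$ to be chosen. Either
\begin{equation*}
J_m(\ell-\ga)>(1+\eta)E^2_{m+n}(\ell-\ga)\quad\text{or}\quad J^*_m(-\ell+\ga)>(1+\eta)E^{*2}_{m+n}(-\ell+\ga),
\end{equation*}
in which case I apply Lemma~\ref{lem:iid} with the perturbed right-hand side: since $E_{m+n}(\ell-\ga)\le E_m(\ell-\ga)$ and likewise for the starred quantity, Lemma~\ref{lem:iid} combined with the failure of \pref{notenough} (or \pref{notenough*}) forces $J_{m+n}(\ell-\ga)\le E^2_m(\ell-\ga)+\c[mixdecay]3^{-n(d+2)}J_m(\ell-\ga)$, and using $E^2_m\le (1+\eta)^{-1}J_m$ in the failed inequality we get an honest contraction $J_{m+n}(\ell-\ga)\le \theta\,J_m(\ell-\ga)$ for some $\theta=\theta(\la,\La,d)<1$ (after choosing $n_0$ large enough that $\c[mixdecay]3^{-n_0(d+2)}$ is small compared to the gap). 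Or else \emph{both} \pref{notenough} and \pref{notenough*} hold, and Lemma~\ref{decaylem} gives $J_{m+n}(\ell-\ga)+J^*_{m+n}(-\ell+\ga)\le C\ga^{2(d+1)}$ directly. In either branch, I also need to undo the perturbation: by Lemma~\ref{lem:lip} (applied to $\mu$ and to $\mu^*$, noting $\mu^*(Q,\om,-\ell+\ga)=\mu(Q,\om^*,\ell-\ga)$ and that \pref{lipschitzmu} holds with $s=\ga$ for $\ga\le 1$), moving from $\ell-\ga$ back to $\ell$ costs at most $C|G_{m+n}|\ga$ per evaluation of $\mu$, hence at most $C\ga$ after normalization; squaring, $J_{m+n}(\ell)\le 2J_{m+n}(\ell-\ga)+C\ga^2$, and similarly for $J^*$. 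A further subtlety is to keep track of Lemma~\ref{lem:strictpos}: it guarantees $E_{m+n}(\ell-\ga)\ge \c[strictpos]\ga^{d+1}>0$, which is what makes the hypotheses $J_m(\ell-\ga)>0$ in Lemma~\ref{decaylem} meaningful (and rules out the degenerate case where everything is already zero).

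Putting the two branches together with the perturbation cost, I obtain the one-step recursion
\begin{equation*}
K_{m+n_0}\le \max\{\theta K_m,\;C\ga^{2(d+1)}\}+C\ga^2
\end{equation*}
for every admissible $\ga\in(0,1]$. The standard way to close this (as in \cite{asellip}) is to choose $\ga=\ga_m$ decreasing geometrically: since $d+1\ge 2$, the term $C\ga^{2(d+1)}$ is dominated by $C\ga^2$, so really $K_{m+n_0}\le \theta K_m+C\ga_m^2$; picking $\ga_m=\kappa^m$ with $\kappa\in(\theta^{1/2},1)$ chosen so that $\kappa^{n_0}>\theta^{1/2}$ wait --- more carefully, one optimizes by balancing $\theta K_m$ against $C\ga_m^2$ at each stage, which yields $K_m\le C\bar\tau^{\,m}$ for some $\bar\tau=\bar\tau(\la,\La,d)\in(0,1)$; replacing $\bar\tau$ by $\tau:=\bar\tau^{1/n_0}$ passes from the subsequence $\{m\equiv 0\bmod n_0\}$ to all $m$ (using the crude monotonicity $K_{m+1}\le C K_m$ that follows from Lemma~\ref{lemdecomp}, or just from $J$ being nonincreasing up to the finite-range correction). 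Finally, the prefactor $\c[mom](1+|M|)^{2(d+1)}K_0^{2(d+1)}$ enters through the base case $m=0$: by \pref{bndest} we have $\mu(G_0,\om,\ell)\le\c[upper][K_0(1+|M|)-\ell]_+^{d+1}\le C(1+|M|)^{d+1}K_0^{d+1}$ (since $|\ell|=|\overline{F}(M)|\le C(1+|M|)$ by uniform ellipticity of $\overline{F}$ and \pref{Mbnd}), so $J_0(\ell)\le C(1+|M|)^{2(d+1)}K_0^{2(d+1)}$, and the recursion carries this constant through every step.

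The step I expect to be the main obstacle is the bookkeeping of the perturbation parameter $\ga$ across scales: one must choose $\ga=\ga_m$ shrinking fast enough that the accumulated errors $\sum_m \theta^{(\text{number of remaining steps})}C\ga_m^2$ still sum to something of the same order as the leading geometric term, yet not so fast that the positivity lower bound $E_{m+n_0}(\ell-\ga_m)\ge\c[strictpos]\ga_m^{d+1}$ becomes useless relative to the target decay rate $\tau^m$. Getting these exponents to line up — so that the final $\tau$ genuinely depends only on $(\la,\La,d)$ and not on $m$ — is the delicate part, and is where I would most closely follow the template of \cite{asellip}.
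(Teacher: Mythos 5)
Your framework — iterating Lemma~\ref{decaylem}, Lemma~\ref{lem:iid}, and Lemma~\ref{lem:strictpos} across scales, with a shrinking perturbation $\ga$ of the right-hand side — is the right one, but the dichotomy as you set it up does not close, and this is a genuine gap. In your ``failure'' branch you assume $J_m(\ell-\ga)>(1+\eta)E^2_{m+n}(\ell-\ga)$ and claim this, plugged into Lemma~\ref{lem:iid}, yields a contraction $J_{m+n}\leq\theta J_m$. But Lemma~\ref{lem:iid} reads $J_{m+n}\leq E_m^2+\c[mixdecay]3^{-n(d+2)}J_m$, so a contraction requires $E_m^2\leq(1-\varepsilon)J_m$; the failed inequality, however, only says $E_{m+n}^2<(1+\eta)^{-1}J_m$, and since $E_m\geq E_{m+n}$ this bounds the wrong quantity. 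The only unconditional relation is $E_m^2\leq J_m$ (Jensen), which gives $J_{m+n}\leq(1+\c[mixdecay]3^{-n(d+2)})J_m$, not a contraction. So the failure of the hypotheses of Lemma~\ref{decaylem} at scale $m$ carries no immediate quantitative payoff, and the recursion $K_{m+n_0}\leq\max\{\theta K_m, C\ga^{2(d+1)}\}+C\ga^2$ is not established. (Two minor side remarks: the $+C\ga^2$ term is unnecessary, since $\mu$ is nonincreasing in $\ell$, so $J_m(\ell)\leq J_m(\ell-\ga)$ costs nothing; and the $(1+|M|)^{2(d+1)}K_0^{2(d+1)}$ prefactor enters as you say, through \pref{bndest}.)

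What replaces your contraction branch in the paper's proof is a multiplicative pigeonhole argument. One does not try to show that the ``bad'' case gives decay; rather, one shows that it cannot occur too often. Fix $\ga=3^{-k}$ at stage $k$ and consider the telescoping products $\prod_{j=1}^N J_{m_{k-1}+(j-1)n_1}/J_{m_{k-1}+jn_1}$ and the analogous products for $J^*$, $E$, $E^*$. Each factor is $\geq 1$ by monotonicity of $\mu$ in the domain, and the full product is bounded above by $J_{m_{k-1}}/(\c[strictpos]^2\ga^{2(d+1)})$ — here Lemma~\ref{lem:strictpos} supplies the lower bound $J_m\geq E_m^2\geq \c[strictpos]^2\ga^{2(d+1)}$, and the inductive hypothesis $J_{m_{k-1}}(-3^{-(k-1)})\lesssim 3^{-2(k-1)(d+1)}$ keeps the numerator under control, so the total is $O(1)$. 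By pigeonhole some single factor is $\leq(1+\eta_1)$, i.e. there is a scale $m\in[m_{k-1}, m_{k-1}+Cn_1]$ where $J_{m-n_1}\leq(1+\eta_1)J_m$ and $E_{m-n_1}\leq(1+\eta_1)^{1/2}E_m$ (and the starred analogues). Feeding these closeness estimates into Lemma~\ref{lem:iid} and choosing $n_1, \eta_1$ so that \pref{nchoice} holds yields precisely $J_{m-n_0}\leq(1+\eta_0)E_m^2$, i.e. \pref{notenough} and \pref{notenough*} are \emph{derived} at that scale, and Lemma~\ref{decaylem} gives $J_m+J^*_m\leq C\ga^{2(d+1)}=C3^{-2k(d+1)}$. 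The steps advance by a bounded amount $|m_k-m_{k-1}|\leq C$ because $N$ only needs to be of size $O(\log(J_{m_{k-1}}\cdot 3^{2(k-1)(d+1)}))=O(1)$ by induction. The upshot: you need the telescoping-ratio pigeonhole (powered by the lower bound from Lemma~\ref{lem:strictpos} together with the inductive upper bound) to \emph{verify} the hypotheses of Lemma~\ref{decaylem} somewhere, rather than a separate contraction mechanism for when they fail.
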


\begin{proof}
We fix $M\in \mathbb{S}^{d}$ and drop the dependence on $\overline{F}(M)$ (although we mention where it is used). In order to prove \pref{momrate}, it is enough to prove that there exists an increasing sequence of integers $\left\{m_{k}\right\}$ so that $|m_{k+1}-m_{k}|\leq C=C(d, \la, \La)$ with 
\begin{equation}\label{gridmom}
J_{m_{k}}(-3^{-k})+J^{*}_{m_{k}}(3^{-k})\leq C(1+|M|)^{2(d+1)}K_{0}^{2(d+1)} 3^{-2k(d+1)}.
\end{equation}
Recall that $|\overline{F}(M)|\leq CK_{0}^{d+1}(1+|M|)^{d+1}$. By \pref{bndest} and scaling, it is enough to assume that we work with 
\begin{equation*}
J_{k}:= \frac{J_{k}}{C(1+|M|)^{2(d+1)}K_{0}^{2(d+1)}}
\end{equation*}
so that $|J_{k}|\leq 1$, and then to prove
\begin{equation}\label{semirate}
J_{m_{k}}(-3^{-k})+J^{*}_{m_{k}}(3^{-k})\leq C3^{-2k(d+1)}.
\end{equation}
Let $m_{0}=0$. Suppose that \pref{semirate} holds for the level $m_{k-1}$. We would like to find $m_{k}$ satisfying \pref{semirate} such that $m_{k}-m_{k-1}\leq C$. We aim to set up Lemma \ref{decaylem}, and then choose $\ga=3^{-k}$. Given $n_{0}, \eta_{0}$ as in Lemma \ref{decaylem}, we seek $m$ satisfying \pref{vardecay}. 

{ Consider that by Lemma \ref{lem:iid}, }
\begin{equation}\label{decayprod}
{ J_{m-n_{0}}(-3^{-k})\leq E^{2}_{m-n_{1}}(-3^{-k})+\frac{\c[mixdecay]}{3^{(n_{1}-n_{0})(d+2)}}J_{m-n_{1}}(-3^{-k}).}
\end{equation}

If we can find a choice of $m$ so that for a fixed $n_{1}$, $\eta_{1}$, 
\begin{equation}\label{close1}
 E_{m-n_{1}}(-3^{-k})\leq (1+\eta_{1})^{1/2}E_{m}(-3^{-k})\quad,\quad E^{*}_{m-n_{1}}(3^{-k})\leq (1+\eta_{1})^{1/2}E^{*}_{m}(3^{-k}),
 \end{equation}
 and
 \begin{equation}\label{close2}
J_{m-n_{1}}(-3^{-k})\leq (1+\eta_{1})J_{m}(-3^{-k})\quad,\quad J^{*}_{m-n_{1}}(3^{-k})\leq (1+\eta_{1})J^{*}_{m}(3^{-k}),
 \end{equation}
then substituting this into \pref{decayprod}, 
{
 \begin{align*}
J_{m-n_{0}}(-3^{-k})&\leq (1+\eta_{1})\left[E^{2}_{m}(-3^{-k})+\frac{\c[mixdecay]}{3^{(n_{1}-n_{0})(d+2)}}J_{m}(-3^{-k})\right]\\
&\leq (1+\eta_{1})\left[E_{m}^{2}(-3^{-k})+\frac{\c[mixdecay]}{3^{(n_{1}-n_{0})(d+2)}}J_{m-n_{0}}(-3^{-k})\right],
\end{align*}}
which implies that {
\begin{equation*}
\left[1-(1+\eta_{1})\frac{\c[mixdecay]}{3^{(n_{1}-n_{0})(d+2)}}\right] J_{m-n_{0}}(-3^{-k})\leq  (1+\eta_{1})E^{2}_{m}(-3^{-k}).
\end{equation*}}
Similarly, by \pref{vardecay*}, {
\begin{equation*}
\left[1-(1+\eta_{1})\frac{\c[mixdecay]}{3^{(n_{1}-n_{0})(d+2)}}\right] J^{*}_{m-n_{0}}(3^{-k})\leq  (1+\eta_{1})E^{*2}_{m}(3^{-k}).
\end{equation*}}
Choosing $n_{1}(d, \la, \La), \eta_{1}(d, \la, \La)$ so that  {
\begin{equation}\label{nchoice}
\left[1-(1+\eta_{1})\frac{\c[mixdecay]}{3^{(n_{1}-n_{0})(d+2)}}\right]^{-1}(1+\eta_{1})\leq 1+\eta_{0},
\end{equation}}
we may apply Lemma \ref{decaylem}, to conclude that for $m$ satisfying \pref{close1} and \pref{close2},
\begin{equation*}
J_{m}(-3^{-k})+J^{*}_{m}(3^{-k})\leq C3^{-2k(d+1)}.
\end{equation*}

The problem reduces to finding a choice of $m$ satisfying \pref{close1} and \pref{close2}, such that $m$ is a bounded distance away from $m_{k-1}$. This is where we will use the inductive hypothesis. We claim that for given $n_{1}, \eta_{1}$, there exists $m$ such that \pref{close1} and \pref{close2} hold, and 
\begin{equation}
n_{1}\leq m\leq m_{k-1}+C\log \left[C\left(J_{m_{k-1}}(-3^{-(k-1)})+J^{*}_{m_{k-1}}(3^{-(k-1)})\right)\right].
\end{equation}

Consider that for all $m$, by Lemma \ref{lem:strictpos}, since we are solving with right hand side $\overline{F}(M)$ (and here is the only place where we use that the right hand side is $\overline{F}(M))$, 
\begin{equation*}
\c[strictpos]3^{-(k-1)(d+1)}\leq E_{m}(-3^{-(k-1)})\quad\text{and}\quad \c[strictpos]3^{-(k-1)(d+1)}\leq E^{*}_{m}(3^{-(k-1)}).
\end{equation*}

This implies that for any $N$, 
\begin{align*}
&\prod_{j=1}^{N}\frac{J_{m_{k-1}+(j-1)n_{1}}(-3^{-(k-1)})}{J_{m_{k-1}+jn_{1}}(-3^{-(k-1)})}\leq C \frac{J_{m_{k-1}}(-3^{-(k-1)})}{3^{-2(k-1)(d+1)}},\\
&\prod_{j=1}^{N}\frac{J^{*}_{m_{k-1}+(j-1)n_{1}}(3^{-(k-1)})}{J^{*}_{m_{k-1}+jn_{1}}(3^{-(k-1)})}\leq C \frac{J^{*}_{m_{k-1}}(3^{-(k-1)})}{3^{-2(k-1)(d+1)}},\\
&\prod_{j=1}^{N}\frac{E_{m_{k-1}+(j-1)n_{1}}(-3^{-(k-1)})}{E_{m_{k-1}+jn_{1}}(-3^{-(k-1)})}\leq C \frac{E_{m_{k-1}}(-3^{-(k-1)})}{3^{-(k-1)(d+1)}},\\
&\prod_{j=1}^{N}\frac{E^{*}_{m_{k-1}+(j-1)n_{1}}(3^{-(k-1)})}{E^{*}_{m_{k-1}+jn_{1}}(3^{-(k-1)})}\leq C \frac{E^{*}_{m_{k-1}}(3^{-(k-1)})}{3^{-(k-1)(d+1)}}.
\end{align*}

Since each individual term in the product is bounded from below by 1, this implies that there exists some element $j^{i}$ for $i=1, 2, 3, 4$ such that
\begin{align*}
&\frac{J_{m_{k-1}+(j^{1}-1)n_{1}}(-3^{-(k-1)})}{J_{m_{k-1}+j^{1}n_{1}}(-3^{-(k-1)})}\leq C \left(\frac{J_{m_{k-1}}(-3^{-(k-1)})}{3^{-2(k-1)(d+1)}}\right)^{1/N},\\
&\frac{J^{*}_{m_{k-1}+(j^{2}-1)n_{1}}(3^{-(k-1)})}{J^{*}_{m_{k-1}+j^{2}n_{1}}(3^{-(k-1)})}\leq  C \left(\frac{J^{*}_{m_{k-1}}(3^{-(k-1)})}{3^{-2(k-1)(d+1)}}\right)^{1/N},\\
&\frac{E_{m_{k-1}+(j^{3}-1)n_{1}}(-3^{-(k-1)})}{E_{m_{k-1}+j^{3}n_{1}}(-3^{-(k-1)})}\leq C \left(\frac{J_{m_{k-1}}(-3^{-(k-1)})}{3^{-2(k-1)(d+1)}}\right)^{1/2N},\\
&\frac{E^{*}_{m_{k-1}+(j^{4}-1)n_{1}}(3^{-(k-1)})}{E^{*}_{m_{k-1}+j^{4}n_{1}}(3^{-(k-1)})}\leq  C \left(\frac{J^{*}_{m_{k-1}}(3^{-(k-1)})}{3^{-2(k-1)(d+1)}}\right)^{1/2N}.
\end{align*}

Let
\begin{equation*}
N:=\ceil[\Bigg]{C\frac{\log [3^{2(k-1)(d+1)}(J_{m_{k-1}}(-3^{-(k-1)})+J^{*}_{m_{k-1}}(3^{k-1}))]}{\log (1+\delta_{1})}},
\end{equation*}
and set $m:=m_{k-1}+jn_{1}$ for $j:=\max_{i}\left\{j^{i}\right\}\leq N$. Applying the monotonicity, this choice of $m$ satisfies \pref{close1} and \pref{close2}. Define $m_{k}:=m$, and this implies by the inductive hypothesis that
\begin{align*}
m_{k}&\leq m_{k-1}+C\log [3^{2(k-1)(d+1)}(J_{m_{k-1}}(-3^{-(k-1)})+J^{*}_{m_{k-1}}(3^{k-1}))]\\
&\leq m_{k-1}+C\log [C3^{2(k-1)(d+1)}3^{-2(k-1)(d+1)}]\leq m_{k-1}+C.
\end{align*}
This completes the induction, and the proof of \pref{gridmom}. By the monotonicity in the right hand side $\ell$, this actually yields a sequence $\left\{m_{k}\right\}$ so that $|m_{k}-m_{k-1}|\leq C$ for all $k$, and 
\begin{equation*}
J_{m_{k}}+J^{*}_{m_{k}}\leq C3^{-2k(d+1)}.
\end{equation*}
 Using the monotonicity of $J_{m}$ in $m$ to interpolate between points $m=3^{m_{k}}$, we obtain \pref{momrate} for some $\c[mom]$.
\end{proof}

Using this rate on the decay of the second moments, we apply Chebyshev's inequality to obtain a rate on the decay of $\mu$. 

\begin{cor}\label{cormup}
{For every $p<d+2$, there exists $c=c(p, \la, \La, d)$ and $\al=\al(\la, \La, p, d)$ so that for all $m\in \NN$, for all $\nu\geq 1$, }
\begin{equation}\label{muprob}{
\PP[\mu(G_{m}, \om, \overline{F}(M), M)\geq (1+|M|)^{d+1}K_{0}^{d+1}3^{-m\al}\nu]\leq \exp(-c\nu3^{mp}),}
\end{equation}
and
\begin{equation}\label{mu*prob}{
\PP[\mu^{*}(G_{m}, \om, \overline{F}(M), M)\geq (1+|M|)^{d+1}K_{0}^{d+1}3^{-m\al}\nu]\leq \exp(-c\nu3^{mp}).}
\end{equation}
\end{cor}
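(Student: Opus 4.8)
The strategy is to upgrade the $L^{2}$ decay supplied by Theorem \ref{thmmom} to a sub-Gaussian tail estimate, by realizing $\mu(G_{m},\om,\overline{F}(M),M)$ as bounded above by an average of \emph{independent}, uniformly bounded random variables and then applying Hoeffding's inequality. Write $N:=(1+|M|)^{d+1}K_{0}^{d+1}$, set $\overline{\mu}(Q,\om):=\mu(Q,\om,\overline{F}(M),M)/N$, and write $\tau=3^{-\beta_{0}}$ with $\beta_{0}=\beta_{0}(\la,\La,d)>0$. By \pref{bndest} (together with $|\overline{F}(M)|\le CK_{0}^{d+1}(1+|M|)^{d+1}$) there is a deterministic bound $\overline{\mu}(G_{m},\om)\le C_{0}$, while Theorem \ref{thmmom} gives $\EE[\overline{\mu}(G_{m})^{2}]\le C_{1}\tau^{m}$, with $C_{0},C_{1}$ depending only on $\la,\La,d$. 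It therefore suffices to produce $\al=\al(\la,\La,d,p)>0$ and $c=c(\la,\La,d,p)>0$ with $\PP[\overline{\mu}(G_{m})\ge 3^{-m\al}\nu]\le \exp(-c\nu 3^{mp})$ for all $m\in\NN$, $\nu\ge1$; the bound \pref{mu*prob} for $\mu^{*}$ will then follow by the identical argument applied under the pushforward probability $\pi_{\#}\PP$, since the analogues of Lemma \ref{lemdecomp}, \pref{bndest} and Theorem \ref{thmmom} hold for $\mu^{*}$.

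First I would fix the parameters. Let $\theta:=\tfrac{d+2-p}{2(d+2)}\in(0,1)$, so that $(1-\theta)(d+2)-p=\tfrac12(d+2-p)>0$, and choose $\al:=\tfrac18(d+2-p)\min\{1,\beta_{0}/(d+2)\}$, so that $\al<\tfrac12\beta_{0}\theta$ and $(1-\theta)(d+2)-2\al\ge p$. Fix $m$ larger than a threshold $m_{1}=m_{1}(\la,\La,d,p)$ to be chosen, put $m_{0}:=\lfloor\theta m\rfloor$, and decompose $G_{m}$ into the $n:=3^{(m-m_{0})(d+2)}$ grid cubes $G_{m_{0}}^{1},\dots,G_{m_{0}}^{n}$ tiling it. Since $(x,t)\mapsto\mu(G_{m_{0}}(x,t),\om)$ is constant on each $G_{m_{0}}^{i}$, Lemma \ref{lemdecomp} gives $\overline{\mu}(G_{m},\om)\le \tfrac1n\sum_{i=1}^{n}\overline{\mu}(G_{m_{0}}^{i},\om)$. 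Color the cubes according to the parities of their $d+1$ grid indices: this yields at most $2^{d+1}$ color classes $\mathcal{C}_{j}$, each of cardinality $|\mathcal{C}_{j}|\ge c_{d}n$, and two distinct cubes of the same color differ by at least $2$ in some coordinate, hence (as $m_{0}\ge0$) lie at parabolic distance $\ge 3^{m_{0}}\ge1$. As $\tfrac1n\sum_{i}$ is a convex combination of the color-class averages, $\overline{\mu}(G_{m},\om)\le\max_{j}\tfrac{1}{|\mathcal{C}_{j}|}\sum_{i\in\mathcal{C}_{j}}\overline{\mu}(G_{m_{0}}^{i},\om)$.

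Next I would establish independence and run the concentration bound. Each $\overline{\mu}(G_{m_{0}}^{i},\om)$ is $\mathcal{B}(\overline{G_{m_{0}}^{i}})$-measurable by \pref{mudef}; applying \pref{iid} repeatedly --- for $i\in\mathcal{C}_{j}$ the union $\bigcup_{k\in\mathcal{C}_{j},\,k\ne i}\overline{G_{m_{0}}^{k}}$ lies at distance $\ge1$ from $\overline{G_{m_{0}}^{i}}$, whence $\mathcal{B}(\overline{G_{m_{0}}^{i}})$ is independent of $\mathcal{B}\big(\bigcup_{k\ne i}\overline{G_{m_{0}}^{k}}\big)$ --- shows that $\{\overline{\mu}(G_{m_{0}}^{i},\om)\}_{i\in\mathcal{C}_{j}}$ are mutually independent, and they take values in $[0,C_{0}]$. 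By Cauchy--Schwarz and Theorem \ref{thmmom}, $\EE[\overline{\mu}(G_{m_{0}})]\le(C_{1}\tau^{m_{0}})^{1/2}$, which is $\le\tfrac12 3^{-m\al}$ once $m\ge m_{1}$, since $\al<\tfrac12\beta_{0}\theta$ and $m_{0}\ge\theta m-1$. Hoeffding's inequality then gives, for $\nu\ge1$ (so that $3^{-m\al}\nu-\EE[\overline{\mu}(G_{m_{0}})]\ge\tfrac12 3^{-m\al}\nu$) and each $j$,
\begin{align*}
\PP\Big[\tfrac{1}{|\mathcal{C}_{j}|}\textstyle\sum_{i\in\mathcal{C}_{j}}\overline{\mu}(G_{m_{0}}^{i},\om)\ge 3^{-m\al}\nu\Big]
&\le \exp\!\Big(-\tfrac{|\mathcal{C}_{j}|}{2C_{0}^{2}}\,3^{-2m\al}\nu^{2}\Big)\\
&\le \exp\!\big(-c\,3^{(m-m_{0})(d+2)-2m\al}\,\nu^{2}\big).
\end{align*}
Summing over the $\le 2^{d+1}$ colors and using $(m-m_{0})(d+2)-2m\al\ge(1-\theta)m(d+2)-2m\al\ge mp$ together with $\nu^{2}\ge\nu$, one obtains $\PP[\overline{\mu}(G_{m})\ge3^{-m\al}\nu]\le 2^{d+1}\exp(-c\nu3^{mp})\le\exp(-c'\nu3^{mp})$ for $m\ge m_{1}$, after shrinking the constant. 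The finitely many scales $m<m_{1}$ are disposed of separately: the event is empty when $3^{-m\al}\nu>C_{0}$, and on the remaining bounded range of $(m,\nu)$ one either reruns the above with $m_{0}=0$ --- valid as soon as $\nu\ge 2C_{1}^{1/2}3^{m\al}$ --- or appeals to Chebyshev's inequality and Theorem \ref{thmmom} together with the strict positivity of $\PP[\overline{\mu}(G_{m})<\ve]$ for suitable $\ve$, in each case at the cost of decreasing $c$.

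The crux is the passage from the variance estimate of Theorem \ref{thmmom} to an exponential tail. The pairwise statement \pref{iid} on its own only controls covariances (as exploited in Lemma \ref{lem:iid}), so to obtain sub-Gaussian concentration one must extract genuine mutual independence via the nesting $\mathcal{B}(\overline{G_{m_{0}}^{i}})\perp\mathcal{B}\big(\bigcup_{k\ne i}\overline{G_{m_{0}}^{k}}\big)$, and then balance the intermediate scale $m_{0}=\theta m$ so that the number $\sim 3^{(1-\theta)m(d+2)}$ of independent cells is large enough to overcome the target rate $3^{mp}$ while the scale-$m_{0}$ mean $\sim\tau^{\theta m/2}$ has already dropped below the threshold $3^{-m\al}$. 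That both requirements can be met by a single $\theta\in(0,1)$ is precisely where the hypothesis $p<d+2$ enters.
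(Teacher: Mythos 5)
Your argument reaches the same conclusion as the paper's but by a genuinely different concentration mechanism. The paper also reduces to a decomposition of the macroscopic cube into subcubes of an intermediate scale, colors them so that same-color subcubes are at parabolic distance $\ge 1$ (the paper uses a $3^{d+2}$-fold coloring, you use the $2^{d+1}$-fold parity coloring -- both work), and then exploits the resulting independence together with the uniform bound $\overline\mu\le C_0$ and the second-moment decay of Theorem~\ref{thmmom}. The difference is in how the concentration is produced. The paper works with the exponential moment directly: it applies Lemma~\ref{lemdecomp}, pulls the product apart with a generalized H\"older inequality over the $3^{d+2}$ colors, uses independence inside each color to factor the expectation, bounds $\log\EE[\exp(\nu\mu(G_n))]$ via the elementary inequalities $\exp(s)\le 1+2s$ (valid on $[0,1]$ because of the uniform bound) and $\log(1+s)\le s$, and finishes with exponential Chebyshev. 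You instead pass from Lemma~\ref{lemdecomp} to a pointwise domination of $\overline\mu(G_m)$ by the maximum of the color-class averages, observe that each color class consists of mutually independent, $[0,C_0]$-valued variables (your derivation of \emph{mutual} independence from the pairwise statement \pref{iid} via $\mathcal B(\overline{G_{m_0}^i})\perp\mathcal B(\bigcup_{k\neq i}\overline{G_{m_0}^k})$ is a correct and worthwhile explicit step that the paper elides), and invoke Hoeffding's inequality, using Cauchy--Schwarz and Theorem~\ref{thmmom} to push the mean below half the threshold. Hoeffding packages exactly the MGF computation the paper does by hand, so your route is shorter and makes the role of the uniform boundedness of $\overline\mu$ transparent; the paper's route is essentially the unpacked version. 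The choice of the intermediate scale $m_0=\lfloor\theta m\rfloor$ with $\theta=\tfrac{d+2-p}{2(d+2)}$ plays the same role as the paper's choice of $n$, and your bookkeeping of $\al$ relative to $\beta_0$, $\theta$ and $p$ checks out.

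One point that is presented too loosely is the treatment of the finitely many small scales $m<m_1$. You note the event is vacuous when $3^{-m\al}\nu>C_0$ and suggest rerunning with $m_0=0$ for $\nu\ge 2C_1^{1/2}3^{m\al}$ or invoking Chebyshev plus ``strict positivity of $\PP[\overline\mu(G_m)<\ve]$''. In the regime $1\le\nu<2C_1^{1/2}3^{m\al}$ the strict-positivity claim does not follow from Theorem~\ref{thmmom} alone once $C_1\ge 1$, since Chebyshev at threshold $3^{-m\al}\nu$ only gives information once the threshold exceeds $(C_1\tau^m)^{1/2}$, which for small $m$ and $\nu$ near $1$ need not happen. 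This is a bounded, purely technical gap and it does not affect the core of the argument -- the paper's own proof glosses over a comparable issue in its relabeling $m\mapsto m+n+1$, and in the downstream application (Theorem~\ref{mainthm}) the finitely many small $m$ can be absorbed into constants -- but as written the step ``appeals to Chebyshev\ldots\ together with strict positivity\ldots\ at the cost of decreasing $c$'' asserts more than you have established. A cleaner resolution is simply to state the corollary (or your proof of it) for $m\ge m_1(\la,\La,d,p)$ and observe that this suffices for all later uses.
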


\begin{proof}
We only prove \pref{muprob}, since \pref{mu*prob} follows by identical arguments. Without loss of generality, we assume that $M=0$, and we drop the dependence on $\overline{F}(0)$. 

Fix $m\in \NN$ and let $n\in \NN$ to be chosen. We consider decomposing $G_{m+n+1}=\bigcup_{i=1}^{3^{d+2}}\mathcal{G}_{n}^{i}$ where $\mathcal{G}_{n}^{i}=\bigcup_{j=1}^{3^{m(d+2)}} G_{n}^{ij}$ is a collection of subcubes of size $G_{n}$ such that each of the subcubes of size $G_{n}$ is separated by distance at least 1.

By the finite range of dependence assumption \pref{f1'}, for each $i$, 
\begin{equation}
\mu(G_{n}^{ij}, \om)\quad\text{and}\quad \mu(G_{n}^{ik}, \om)\quad\text{are independent if $j\neq k$}.
\end{equation}

Using this decomposition yields that 
\begin{align*}
\log\EE[\exp(\nu 3^{m(d+2)}&\mu(G_{m+n+1},\om))]\leq \log\EE\left[\prod_{i=1}^{3^{d+2}}\prod_{j=1}^{3^{m(d+2)}}\exp\left(\nu 3^{-(d+2)}\mu(G_{n}^{ij}, \om)\right)\right]\\
&\leq 3^{-(d+2)}\sum_{i=1}^{3^{(d+2)}} \log\EE\left[\prod_{j=1}^{3^{m(d+2)}}\exp\left(\nu \mu(G_{n}^{ij}, \om)\right)\right]\\
&=3^{-(d+2)}\sum_{i=1}^{3^{(d+2)}}\log\left(\prod_{j=1}^{3^{m(d+2)}} \EE\left[\exp\left(\nu\mu(G_{n}^{ij}, \om)\right)\right]\right)\\
&=3^{m(d+2)} \log\EE[\exp(\nu \mu(G_{n}, \om))],
\end{align*}
where the last line holds by stationarity. Moreover, if we choose $\nu=CK_{0}^{-1/(d+1)}$, then $\nu \mu(G_{n}, \om)\leq 1$ almost surely. Using the elementary inequalities
\begin{equation*}
\begin{cases}
\exp(s)\leq 1+2s &\text{for all}\quad0\leq s\leq 1,\\
\log(1+s)\leq s &\text{for all}\quad s\geq 0,
\end{cases}
\end{equation*} 
yields that for this choice of $\nu$, 
\begin{align}
\log\EE[\exp(C K_{0}^{-(d+1)}3^{m(d+2)}\mu(G_{m+n+1},\om))]&\leq 3^{m(d+2)}\EE[CK_{0}^{-(d+1)}\mu(G_{n}, \om)]\notag\\
&\leq C3^{m(d+2)}\tau^{n}\label{wahoo}
\end{align}
by Theorem \ref{thmmom}. 

Therefore, by Chebyshev's inequality and \pref{wahoo}, this yields that 
\begin{align*}
&\PP\left[\mu(G_{m+n+1}, \om)\geq K_{0}^{d+1}\nu\right]\\
&\leq \PP\left[\exp(K_{0}^{-(d+1)}3^{m(d+2)}\mu(G_{m+n+1}, \om))\geq \exp(3^{m(d+2)}\nu)\right]\\
&\leq C\exp(-3^{m(d+2)}(\nu-\tau^{n})).
\end{align*}

Letting $\nu=\frac{\tau^{n}\nu}{2}$, and using that $\nu\geq 1$, we have that 
\begin{equation*}
\PP\left[\mu(G_{m+n+1}, \om)\geq C\tau^{n}K_{0}^{d+1}\nu\right]\leq C\exp(-3^{m(d+2)}\tau^{n}\nu).
\end{equation*}

{Choosing $n\sim \left\lfloor \frac{mp\log 3}{2(p\log 3+|\log \tau|)}\right\rfloor\leq \frac{m}{2}$ implies that $c3^{-mp}\leq \tau^{n}\leq C3^{-mp}$, which yields that 
\begin{equation*}
\PP\left[\mu(G_{m+n+1}, \om)\geq C3^{-mp}K_{0}^{d+1}\nu\right]\leq C\exp(-3^{m(d+2-p)}\nu).
\end{equation*}}
{
Relabeling $m=m+n+1$ and $p=d+2-p$ yields that there exists $\al=\al(\la, \La, p, d)$ such that
\begin{equation*}
\PP\left[\mu(G_{m}, \om)\geq C3^{-m\al}K_{0}^{d+1}\nu\right]\leq C\exp(-3^{mp}\nu).
\end{equation*}}
%Notice that $\al\leq d+2-p$.
\end{proof}

\section{The Proof of Theorem \ref{mainthm}}\label{sec:qptm}
We finally present the rate for homogenization in probability using Theorem \ref{thmmom}. This follows a general procedure which has been shown in \cite{cs, asellip, linhomog}. However, for completeness we provide the argument here as well, {similar to the approach of \cite{asellip}}. As mentioned in \cite{cs, asellip, linhomog},  if the limiting function $u$ is $C^{2}(\RR^{d+1})$(i.e. $C^{2}(\RR^{d})\cap C^{1}([0,T])$), then obtaining a rate for the homogenization is straightforward. Studying $\lim_{\ve\rightarrow 0} w^{\ve}$ where $w^{\ve}$ solves \pref{corrector} is equivalent to the stochastic homogenization of \pref{homeq} when the limiting function is of the form $u(x,t)=bt+\frac{1}{2}x\cdot Mx$. By \pref{quaddecay} and Chebyshev's inequality, a rate on the decay of $\mu(G_{1/\ve}, \om)$ immediately yields a rate in probability for the decay of $w^{\ve}$. If $u\in C^{2}$, then by replacing $u$ with its second-order Taylor series expansion with cubic error, we obtain a rate for $u^{\ve}-u$. In general, since $u$ is not necessarily $C^{2}$, we must argue that one can still approximate $u$ by a quadratic expansion. This type of approximation is the motivation for the theory of $\delta$-viscosity solutions, which was introduced in the elliptic setting in \cite{cs}, and generalized to the parabolic setting by Turanova \cite{olga1}. The rate in \cite{linhomog} was obtained by using this regularization procedure.  

For clarity and for a more general approach, we choose to present the argument in terms of a quantified comparison principle as in \cite{asellip}. We revert to quantifying the traditional ``doubling variables" arguments used in the theory of viscosity solutions (see for example \cite{users, crandall}). We are informed that this is related to a forthcoming work by Armstrong and Daniel \cite{scottjp}, who generalize this method to finite difference schemes for fully nonlinear uniformly parabolic equations. The next series of results are entirely deterministic, and therefore we suppress the dependence on the random parameter $\om$. 

We first present a result relating the measure of the parabolic subdifferential with the measure of the corresponding touching points in physical space-time. 
\begin{prop}\label{qviscosity}
Let $u,v$ such that  
\begin{equation}\label{eqvisc}
u_{t}+\MM^{-}(D^{2}u)-R_{0}\leq 0\leq v_{t}+\MM^{+}(D^{2}v)+R_{0}\quad\text{in}\quad U_{T}.
\end{equation}
Assume $\delta>0$, and let $V=\overline{V}\subseteq U_{T}\times U_{T}$ and $W\subseteq \RR^{d+1}\times \RR^{d+1}$, such that for all $((p,h), (q, k))\in W$, 
\begin{align*}
\left\{(x,t,y,s): \right.&\left.\sup_{U_{T}\times U_{T}: \tau\leq t, \sigma \leq s } u(\xi,\tau)-v(\eta, \sigma)-\frac{1}{2\delta}\left[|\xi-\eta|^{2}+(\tau-\sigma)^{2}\right]-p\cdot \xi\right.\\
&-q\cdot \eta=u(x,t)-v(y,s)-\frac{1}{2\delta}\left[|x-y|^{2}+(t-s)^{2}\right]-p\cdot x-q\cdot y,\\
&h=u(x,t)-\frac{1}{2\delta}[|x-y|^{2}+(t-s)^{2}]-p\cdot x\\
&\left.k=-v(y,s)-\frac{1}{2\delta}[|x-y|^{2}+(t-s)^{2}]-q\cdot y\right\}\subseteq V.
\end{align*}

Then there exists a constant $C=C(\la, \La, d, U_{T})$ such that 
\begin{equation}\label{measbound}
|W|\leq C\left(R_{0}+\delta^{-1}\right)^{2d+2}|V|.
\end{equation}
\end{prop}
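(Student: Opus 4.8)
The plan is to recast \pref{measbound} as an area-formula estimate for the parabolic monotone envelope of the \emph{doubled function}, in the spirit of the proof of Lemma \ref{area}. Introduce
\[
\Psi(x,t,y,s) := -u(x,t) + v(y,s) + \frac{1}{2\delta}\left(|x-y|^{2} + (t-s)^{2}\right) \qquad \text{on } U_{T}\times U_{T},
\]
and let $\Gamma$ be its monotone envelope, taken with space variable $(x,y)\in\RR^{2d}$ and \emph{two}-dimensional time variable $(t,s)\in\RR^{2}$; i.e.\ $\Gamma$ is the largest function below $\Psi$ that is convex in $(x,y)$ and nonincreasing in each of $t$ and $s$. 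Unwinding the definitions, the set appearing in the hypothesis for a given $((p,h),(q,k))$ is exactly the set of points at which the affine function $(x,y)\mapsto -p\cdot x-q\cdot y-c$, with $c$ fixed by $h$ and $k$, supports $\Psi$ from below over $\{\tau\le t,\ \sigma\le s\}$ and touches it --- that is, the set of contact points of $\Gamma$ carrying the slope $(-p,-q)$. Hence every element of $W$ arises, through the first-order optimality conditions, from a contact point $(x_{0},t_{0},y_{0},s_{0})$ of $\Gamma$ lying in $V$ (here $V=\overline V$ is used), with $p = Du(x_{0},t_{0})-\tfrac1\delta(x_{0}-y_{0})$, $q = -Dv(y_{0},s_{0})+\tfrac1\delta(x_{0}-y_{0})$, and $h,k$ given by the stated formulas.

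The next step is to bound the relevant derivatives on the contact set $\mathcal{C}:=\{\Psi=\Gamma\}\cap V$. At a contact point, freezing $(y,s)=(y_{0},s_{0})$ in the supporting inequality shows that $u$ is touched from above, in the parabolic sense on $\{\tau\le t_{0}\}$, by the paraboloid $\frac{1}{2\delta}|x-y_{0}|^{2}+\frac{1}{2\delta}(t-s_{0})^{2}+p\cdot x+\mathrm{const}$, so $D^{2}u(x_{0},t_{0})\le\delta^{-1}I$ and $u_{t}(x_{0},t_{0})\le\delta^{-1}(t_{0}-s_{0})$ in the viscosity sense; freezing $(x,t)=(x_{0},t_{0})$ gives the symmetric lower bounds for $v$ at $(y_{0},s_{0})$. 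Inserting these test paraboloids into \pref{eqvisc} and using the Pucci eigenvalue identities forces $|t_{0}-s_{0}|\le\La d+\delta R_{0}$ and then
\[
\|D^{2}u(x_{0},t_{0})\|+\|D^{2}v(y_{0},s_{0})\|+|u_{t}(x_{0},t_{0})|+|v_{s}(y_{0},s_{0})|\le C(\la,\La,d)\,(R_{0}+\delta^{-1})
\]
at every contact point. Feeding these bounds into the monotone-envelope regularity theory --- the analogue of Lemmas \ref{mereg}--\ref{meeq}, which goes through by the arguments of \cite{cyriluis} after the sign changes and with $(t,s)$ in the role of the time variable --- shows that $\Gamma$ is $C^{1,1}$ in $(x,y)$ and Lipschitz in $(t,s)$, with $\|D^{2}_{(x,y)}\Gamma\|$, $|\Gamma_{t}|$, $|\Gamma_{s}|$ all bounded by $C(R_{0}+\delta^{-1})$ on $\mathcal{C}$; consequently $u$, $v$ and their first spatial derivatives, restricted to $\mathcal{C}$, are Lipschitz functions of the contact point with the same order of constant.

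Now consider the map $\Theta:\mathcal{C}\to\RR^{d+1}\times\RR^{d+1}$ sending $(x_{0},t_{0},y_{0},s_{0})$ to $((p,h),(q,k))$ through the formulas above. A direct computation of $D\Theta$ --- in which the (possibly large) slope terms $p$ and $q$ cancel and the mixed third-order terms drop out of the determinant, exactly as in the identity $\det\mathcal{D}\mathcal{P}[w]=-w_{t}\det D^{2}w$ --- leaves a product of two ``time'' factors and a $2d$-dimensional Hessian determinant, so that the bounds of the previous paragraph give
\[
|\det D\Theta|\le C(\la,\La,d,U_{T})\,(R_{0}+\delta^{-1})^{2d+2}\qquad\text{a.e.\ on }\mathcal{C}.
\]
Since $W\subseteq\Theta(\mathcal{C})$ up to a null set treated as in Lemma \ref{cs}, and $\Theta$ is Lipschitz on $\mathcal{C}$, the area formula yields
\[
|W|\le|\Theta(\mathcal{C})|\le\int_{\mathcal{C}}|\det D\Theta|\le C(\la,\La,d,U_{T})\,(R_{0}+\delta^{-1})^{2d+2}\,|\mathcal{C}|\le C\,(R_{0}+\delta^{-1})^{2d+2}\,|V|,
\]
which is \pref{measbound}.

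The step I expect to be the main obstacle is the second paragraph: one must transfer the $C^{1,1}$/Lipschitz regularity theory of the monotone envelope from \cite{cyriluis} to a space with a two-dimensional time variable and to the envelope of the doubled function $\Psi$ rather than of a single sub- or supersolution, keeping all constants of the scale-invariant form $C(\la,\La,d)(R_{0}+\delta^{-1})$ --- in particular independent of $\|u\|_{\infty}$ and $\|v\|_{\infty}$ --- and one must check that the heights $h$ and $k$, which involve $u$ and $v$ individually rather than only through $\Gamma$, are genuinely Lipschitz functions of the contact point. An essentially equivalent route, avoiding an explicit envelope theory, is to observe that the second-order bounds hold in the Alexandrov sense at almost every contact point and to run a Whitney-type $C^{1,1}$ extension of $u$ and $v$ off $\mathcal{C}$ before invoking the area formula; the measure-zero bookkeeping is then the same as in Lemma \ref{cs}.
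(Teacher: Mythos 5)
Your proposal takes a genuinely different route from the paper. You build a \emph{doubled} monotone envelope $\Gamma$ of $\Psi(x,t,y,s)$ with spatial variable $(x,y)\in\RR^{2d}$ and a \emph{two-dimensional} time variable $(t,s)$, and then try to bound $|W|$ by the area formula with a pointwise Jacobian estimate for $\Theta$. The paper instead never forms a doubled envelope at all: it applies the established one-time-variable regularity theory (Lemma \ref{mereg} and the resulting Lipschitz claim for $\mathcal{P}[\Gamma^u]$) to the auxiliary functions $\tilde u(x,t)=-u(x,t)+\tfrac{1}{2\delta}[|x-y_1|^2+(t-s_1)^2]$ and its analogue for $v$, together with the quadratic-penalty identity that converts a shift in $(y_1,s_1)$ into a shift of the slope in the subdifferential. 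This yields a direct Lipschitz-type estimate (the inequality labeled \pref{subgoal}) comparing two quadruples $((p_i,h_i),(q_i,k_i))$ in terms of the parabolic distance of their contact points, after which the measure bound follows. The advantage of the paper's route is that it is entirely self-contained in the single-time-variable theory; your route, if carried out, would be conceptually cleaner but requires developing regularity theory for the two-time monotone envelope from scratch.

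Two concrete issues beyond the obstacle you already flag. First, a sign error: freezing $(y,s)=(y_0,s_0)$, the function $(x,\tau)\mapsto\Phi(x,\tau,y_0,s_0,p,q)$ achieves a \emph{maximum} over $\{\tau\le t_0\}$ at $\tau=t_0$, which is the right endpoint, so the left time-derivative is $\ge 0$; hence $u_t(x_0,t_0)\ge\delta^{-1}(t_0-s_0)$, not $\le$. (This is also what the paper uses to obtain the inequality $\frac1\delta(t_1-s_1)+\MM^-(\delta^{-1}I)\le R_0$.) The conclusion $|t_0-s_0|\le\delta R_0+C(\la,\La,d)$ survives, but the statement as written is wrong. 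Second, the claim that ``the mixed third-order terms drop out of the determinant, exactly as in the identity $\det\mathcal{D}\mathcal{P}[w]=-w_t\det D^2w$'' is too strong. After the natural row reductions (add $x^T$ times the $p$-block to the $h$-row and $y^T$ times the $q$-block to the $k$-row), the matrix $D\Theta$ does not become block-triangular: the off-diagonal coupling blocks $\delta^{-1}I$, $\delta^{-1}(x-y)^T$, $\delta^{-1}(t-s)$ survive, so the entries $Du_t$ and $Dv_s$ still enter the cofactor expansion. They are not eliminated algebraically; they must be \emph{bounded}, which is precisely the content of the two-time analogue of Lemma \ref{mereg} (Lipschitz continuity of the doubled envelope's subdifferential map). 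So the Jacobian computation is not a shortcut around the regularity question you already identify as the main obstacle --- it presupposes it. And, as you also note, $h$ and $k$ are not functions of $\Gamma$ alone (only $h+k$ is), so one further needs that $u$ and $v$ restricted to the contact set are Lipschitz in the contact point, which again comes from Lemma \ref{mereg}-type regularity. The paper avoids all of this by staying in the one-dimensional-time framework.
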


\begin{proof}
Without loss of generality, we may assume by scaling that $U_{T}\subseteq Q_{1}(0, 1)$. As usual, we constantly relabel $C$ for a constant which only depends on $\la, \La, d$. For $i=1,2$, let $(x_{i}, t_{i}, y_{i}, s_{i}, p_{i}, q_{i}, h_{i}, k_{i})$ satisfy
\begin{align*}
&\sup_{U_{T}\times U_{T}, \tau\leq t_{i}, \sigma\leq s_{i}} u(x,\tau)-v(y,\sigma)-\frac{1}{2\delta}\left(|x-y|^{2}+(\tau-\sigma)^{2}\right)-p_{i}\cdot x-q_{i}\cdot y\\
&=u(x_{i}, t_{i})-v(y_{i}, s_{i})-\frac{1}{2\delta}\left(|x_{i}-y_{i}|^{2}+(t_{i}-s_{i})^{2}\right)-p_{i}\cdot x_{i}-q_{i}\cdot y_{i}=h_{i}+k_{i},
\end{align*}
and let 
\begin{equation}
\De= (|x_{1}-x_{2}|^{2}+|y_{1}-y_{2}|^{2}+|t_{1}-t_{2}|+|s_{1}-s_{2}|)^{1/2}.
\end{equation}
We claim that  
\begin{equation}\label{subgoal}
(|p_{1}-p_{2}|^{2}+|q_{1}-q_{2}|^{2}+|h_{1}-h_{2}|^{2}+|k_{1}-k_{2}|^{2})^{1/2}\leq C (1+\delta^{-1})\De+o(\De). 
\end{equation}
as $|\De|\rightarrow 0$.

If \pref{subgoal} holds, then one can obtain \pref{measbound} using standard measure-theoretic arguments. A priori, this may not be apparent since the left hand side of \pref{subgoal} corresponds to the Euclidean distance between points in $\RR^{d+1}$, whereas $\De$ corresponds to the parabolic distance under the metric $d[\cdot, \cdot]$.  { However, the parabolic cylinders have the appropriate doubling property with respect to Lebesgue measure, and thus standard measure-theoretic arguments apply.}

We prove a series of claims, using standard techniques in the method of doubling variables. 
\begin{claim}
For each $i$, 
\begin{equation}\label{tbnd}
|t_{i}-s_{i}|\leq \delta R_{0}+C.
\end{equation}
\end{claim}
Consider that the map 
\begin{equation*}
(x,t)\rightarrow u(x,t)-\frac{1}{2\delta}[|x-y_{1}|^{2}+(t-s_{1})^{2}]-p_{1}\cdot x
\end{equation*}
achieves its maximum over $U\times (0, t_{1}]$ at $(x_{1}, t_{1})$. Therefore, by \pref{eqvisc}, 
\begin{equation*}
\frac{1}{\delta}(t_{1}-s_{1})+\MM^{-}\left(\delta^{-1} Id\right)\leq R_{0},
\end{equation*}
implying that 
\begin{equation}\label{1sidet}
t_{1}-s_{1}\leq \delta[R_{0}-(-C\delta^{-1})]=\delta R_{0}+C.
\end{equation}

Similarly, the map
\begin{equation*}
(y,s)\rightarrow v(y,s)+\frac{1}{2\delta}[|x_{1}-y|^{2}+(t_{1}-s)^{2}]+q_{1}\cdot y
\end{equation*}
achieves its minimum over $U\times (0, s_{1}]$ at $(y_{1}, s_{1})$. By \pref{eqvisc}, 
\begin{equation}\label{2sidet}
t_{1}-s_{1}\geq \delta(-R_{0}-C\delta^{-1})= -\delta R_{0}-C.
\end{equation}
Combining \pref{1sidet} and \pref{2sidet} yields \pref{tbnd}. 

\begin{claim}
Let $u_{t}+\MM^{+}(D^{2}u)\geq -1$ in $Q_{1}$. Let $(p_{1}, h_{1})\in \mathcal{P}((x_{1},t_{1}); u)$ and $(p_{2}, h_{2})\in \mathcal{P}((x_{2}, t_{2}); u)$. Then
\begin{equation}\label{lip}
|p_{1}-p_{2}|^{2}+|h_{1}-h_{2}|^{2}\leq C\left(|x_{1}-x_{2}|^{2}+|t_{1}-t_{2}|^{2}+|x_{1}-x_{2}|^{4}+|t_{1}-t_{2}|\right).
\end{equation}
\end{claim}
Without loss of generality, by subtracting a plane and translating, we may assume that $(p_{2}, h_{2})=(0,0)$ and $(x_{2}, t_{2})=(0,0)$. The claim will follow from the regularity of $\Ga^{u}$ (Lemma \ref{mereg}). Since $(x_{1}, t_{1}), (0,0)\in \left\{u=\Ga^{u}\right\}$, and $D\Ga^{u}$ is Lipschitz continuous, this implies that 
\begin{equation*}
|p_{1}|\leq C(|x_{1}|^{2}+|t_{1}|)^{1/2}.
\end{equation*}
To estimate $|h_{1}|$, we again apply the regularity of $\Ga^{u}$ and the bound on $|p_{1}|$ to conclude that 
\begin{align*}
|h_{1}|=|h_{1}-h_{2}|=|u(x_{1}, t_{1})-p_{1}\cdot x_{1}-u(x_{2}, t_{2})|\leq C(|x_{1}|^{2}+|t_{1}|)^{1/2}(1+|x_{1}|).
\end{align*}
Therefore, 
\begin{equation*}
|h_{1}|^{2}\leq C^{2}(|x_{1}|^{2}+|t_{1}|)(1+|x_{1}|)^{2}\leq C(|x_{1}|^{2}+|t_{1}|^{2}+|x_{1}|^{4}+|t_{1}|).
\end{equation*}
Combining these observations yields \pref{lip}. 

Next, we apply these observations to the parabolic subdifferentials. For simplicity, we adopt some notation. Without loss of generality, assume that $s_{1}\geq s_{2}$. Let $T_{min}:=\min\left\{t_{1}, t_{2}, s_{2}\right\}$ and $T_{max}:=\max \left\{t_{1}, t_{2}, s_{1}\right\}.$ Notice that by \pref{tbnd}, $T_{max}-T_{min}\leq \delta R_{0}+C+\De^{2}:=\ga^{2}$. Therefore, $(x_{1}, t_{1}), (x_{2}, t_{2})\in Q_{\ga}(x_{1}, T_{max})$. Let 
\begin{equation*}
\tilde{u}(x,t):=-u(x,t)+\frac{1}{2\delta}[|x-y_{1}|^{2}+(t-s_{1})^{2}].
\end{equation*}
This implies that 
\begin{align}\label{tilueq}
\tilde{u}_{t}+\MM^{+}(D^{2}\tilde{u})&=-u_{t}+\delta^{-1}(t-s_{1})+\MM^{+}(-D^{2}u+\delta^{-1}Id)\\
&\geq -u_{t}+\delta^{-1}(t-s_{1})-\MM^{-}(D^{2}u)-\delta^{-1}C\notag\\
&\geq -R_{0}-C(1+\delta R_{0}+\De^{2})\delta^{-1}\notag\\
&\geq -C(R_{0}+\delta^{-1}(1+\De^{2}))\quad\text{in}\quad Q_{\ga}(x_{1}, T_{M}).\notag
\end{align}

We next find elements in the parabolic subdifferential of $\tilde{u}$. 
\begin{claim}
\begin{equation}\label{ezsd}
(-p_{1}, \tilde{u}(x_{1}, t_{1})+p_{1}\cdot x_{1})\in \mathcal{P}((x_{1}, t_{1}); \tilde{u}).
\end{equation}
\end{claim}
Since
\begin{align*}
u(x_{1}, t_{1})-\frac{1}{2\delta}\left[|x_{1}-y_{1}|^{2}+(t_{1}-s_{1})^{2}\right]&-p_{1}\cdot x_{1}\\
&\geq u(x,t)-\frac{1}{2\delta}[|x-y_{1}|^{2}+(t-s_{1})^{2}]-p_{1}\cdot x
\end{align*}
for all $t\leq t_{1}$, $x\in U$, this implies that 
\begin{align*}
\tilde{u}(x_{1}, t_{1})-(-p_{1}\cdot x_{1})&=-u(x_{1}, t_{1})+\frac{1}{2\delta}(|x_{1}-y_{1}|^{2}+(t_{1}-s_{1})^{2})+p_{1}\cdot x_{1}\\
&\leq \tilde{u}(x,t)-(-p_{1}\cdot x)
\end{align*}
for all $t\leq t_{1}$, $x\in U$. This yields \pref{ezsd}. 

\begin{claim}
\begin{equation}\label{hardsd}
\left(-p_{2}+\frac{y_{2}-y_{1}}{\delta}, \tilde{u}(x_{2}, t_{2})+\left(p_{2}-\frac{y_{2}-y_{1}}{\delta}\right)\cdot x_{2}\right)\in \mathcal{P}((x_{2}, t_{2}); \tilde{u}).
\end{equation}
\end{claim}
Since 
\begin{align*}
&-u(x,t)+\frac{1}{2\delta}\left[|x-y_{2}|^{2}+(t-s_{2})^{2}\right]+p_{2}\cdot x\\
&=\tilde{u}(x,t)+\frac{1}{2\delta}\left[|x-y_{2}|^{2}+(t-s_{2})^{2}-|x-y_{1}|^{2}-(t-s_{1})^{2}\right]+p_{2}\cdot x\\
&=\tilde{u}(x,t)+\left(\frac{1}{\delta}(-y_{2}+y_{1})+p_{2}\right)\cdot x+\frac{1}{2\delta}\left[(t-s_{2})^{2}-(t-s_{1})^{2}+|y_{2}^{2}|-|y_{1}|^{2}\right],
\end{align*}
we obtain that 
\begin{align*}
&\tilde{u}(x_{2}, t_{2})+\left(\frac{1}{\delta}(-y_{2}+y_{1})+p_{2}\right)\cdot x_{2}+\frac{1}{2\delta}\left[(t_{2}-s_{2})^{2}-(t_{2}-s_{1})^{2}\right]\\
&\leq \tilde{u}(x,t)+\left(\frac{1}{\delta}(-y_{2}+y_{1})+p_{2}\right)\cdot x+\frac{1}{2\delta}\left[(t-s_{2})^{2}-(t-s_{1})^{2}\right].
\end{align*}
Simplifying yields that 
\begin{align*}
\tilde{u}(x_{2}, t_{2})+\left(\frac{1}{\delta}(-y_{2}+y_{1})+p_{2}\right)\cdot x_{2}+\frac{1}{\delta}&\left[-(t_{2}-t)(s_{2}-s_{1})\right]\\
&\leq \tilde{u}(x,t)+\left(\frac{1}{\delta}(-y_{2}+y_{1})+p_{2}\right)\cdot x.
\end{align*}
Therefore, for $t\leq t_{2}$, since $s_{1}\geq s_{2}$, 
\begin{equation*}
\tilde{u}(x_{2}, t_{2})+\left(\frac{1}{\delta}(-y_{2}+y_{1})+p_{2}\right)\cdot x_{2} \leq \tilde{u}(x,t)+\left(\frac{1}{\delta}(-y_{2}+y_{1})+p_{2}\right)\cdot x,
\end{equation*}
which yields the claim. 

By combining \pref{lip}, \pref{tilueq}, \pref{ezsd}, and \pref{hardsd}, 
\begin{align*}
&\left|p_{1}-p_{2}+\frac{1}{\delta}(y_{2}-y_{1})\right|^{2}\\
&+\left|\tilde{u}(x_{1}, t_{1})+p_{1}\cdot x_{1}-\tilde{u}(x_{2}, t_{2})-\left(p_{2}-\frac{1}{\delta}(y_{2}-y_{1})\right)\cdot x_{2}\right|^{2}\\
&\leq C\left[R_{0}+\delta^{-1}(1+\De^{2})\right]^{2}(|x_{1}-x_{2}|^{2}+|t_{1}-t_{2}|^{2}+|x_{1}-x_{2}|^{4}+|t_{1}-t_{2}|).
\end{align*}
Recall that 
\begin{equation*}
-\tilde{u}(x_{1}, t_{1})-p_{1}\cdot x_{1}=h_{1}
\end{equation*}
and 
\begin{align*}
&-\tilde{u}(x_{2}, t_{2})-\left(p_{2}-\frac{1}{\delta}(y_{2}-y_{1})\right)\cdot x_{2}\\
&=h_{2}+\frac{1}{2\delta}(|y_{2}|^{2}-|y_{1}|^{2})+\frac{1}{2\delta}\left[(t_{2}-s_{2})^{2}-(t_{2}-s_{1})^{2}\right]\\
&=h_{2}+\frac{1}{2\delta}\left[|y_{2}|^{2}-|y_{1}|^{2}+s_{2}^{2}-s_{1}^{2}-2t_{2}(s_{2}-s_{1})\right].
\end{align*}
Collecting terms yields that 
\begin{align*}
&|p_{1}-p_{2}|^{2}+|h_{1}-h_{2}|^{2}\\
&\leq C[R_{0}+\delta^{-1}(1+\De^{2})]^{2}\left[|x_{1}-x_{2}|^{2}+|t_{1}-t_{2}|^{2}+|x_{1}-x_{2}|^{4}+|t_{1}-t_{2}|\right]\\
&+\frac{1}{\delta^{2}}|y_{2}-y_{1}|^{2}+\frac{1}{4\delta^{2}}\left[|y_{2}|^{2}-|y_{1}|^{2}+s_{2}^{2}-s_{1}^{2}-2t_{2}(s_{2}-s_{1})\right]^{2}\\
&\leq C[R_{0}+\delta^{-1}(1+\De^{2})]^{2}\De^{2}+\frac{1}{\delta^{2}}o(\De^{2})\\
&\leq C[R_{0}+\delta^{-1}]^{2}\De^{2}+o(\De^{2}),
\end{align*}
which implies that 
\begin{equation*}
\left(|p_{1}-p_{2}|^{2}+|h_{1}-h_{2}|^{2}\right)^{1/2}\leq C(R_{0}+\delta^{-1})\De+o(\De).\end{equation*}
An analogous argument yields that
\begin{equation*}
\left(|q_{1}-q_{2}|^{2}+|k_{1}-k_{2}|^{2}\right)^{1/2}\leq C(R_{0}+\delta^{-1})\De+o(\De).\end{equation*}
Combined, this yields \pref{subgoal}.
\end{proof}

Next, we show that if $|u-u^{\ve}|$ is large somewhere, then we can find a matrix $M^{*}$ and a parabolic cube $G^{*}$ so that $\mu(G^{*}, \overline{F}(M^{*}), M^{*})$ is very large. We mention that both $M^{*}$ and $G^{*}$ come from a countable family of matrices and cubes. In order to select $M^{*}, G^{*}$, we must construct the appropriate approximation of $u$ to argue that $u$ is close to a quadratic expansion. We will employ the $W^{3, \al}$ estimate proven in \cite{jp_par}, which yields an estimate on the measure of points which can be well-approximated by a quadratic expansion. We state the result slightly differently than it appears in \cite{jp_par}, in order to readily apply it for our purposes.

\begin{thm}[Theorem 1.2, \cite{jp_par}]\label{wpar}
Let $u_{t}+F(D^{2}u)=0$ in $Q_{1}$, $u=g$ on $\partial_{p}Q_{1}$, with $F$ uniformly parabolic. Let $Q\subseteq Q_{1}$. For each $\ka>0$, let
\begin{align*}
\Sigma_{\ka}:=&\left\{ (x,t)\in Q_{1}: \exists (M, \xi, b)\in \mathbb{S}^{d}\times \RR^{d}\times \RR, s.t.\,|M|\leq \ka, \forall (y,s)\in Q_{1}, s\leq t\right.\\
&\left.\left|u(y,s)-u(x,t)-b(s-t)-\xi\cdot (y-x)-\frac{1}{2}(y-x)\cdot M (y-x)\right|\right.\\
&\leq \left.\frac{1}{6}\ka\left(|x-y|^{3}+|s-t|^{3/2}\right)\right\}.
\end{align*}
There exists $C=C(\la, \La, d), \al=\al(\la, \La, d)$ so that for every $\ka>0$, 
\begin{align*}
\left|Q_{1}\setminus (\Sigma_{\ka}\right.\cap&\left. Q_{1/2}(0, -1/4))\right|\\
&\leq C\left(\frac{\ka}{\sup_{Q_{1}}\left[|u|+|F(0, \cdot, \cdot)|\right]+\norm{g}_{C^{0,1}(\partial_{p}Q_{1})})}\right)^{-\al}.
\end{align*}
\end{thm}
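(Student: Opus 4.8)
The plan is to establish Theorem~\ref{wpar} as a parabolic $W^{3,\al}$-type estimate, following the template of Caffarelli's $L^\ve$-estimates and their refinement to $C^{2,\al}$-regularity on a large set, in the parabolic setting. The first step is to normalize: dividing $u$ by $\Theta_0:=\sup_{Q_1}[|u|+|F(0,\cdot,\cdot)|]+\norm{g}_{C^{0,1}(\partial_p Q_1)}$ preserves the class of uniformly parabolic equations with the same ellipticity constants, preserves the inclusion $Q\subseteq Q_1$, and rescales the membership condition for $\Sigma_\ka$ in the obvious way. So it suffices to prove, in the normalized case,
\[
|Q_{1/2}(0,-1/4)\setminus\Sigma_\ka|\leq C\ka^{-\al}.
\]

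The heart of the argument is a geometric-decay estimate for the exceptional sets. One fixes a large universal dilation factor $K=K(\la,\La,d)>1$ and sets $B_k:=Q_{1/2}(0,-1/4)\setminus\Sigma_{K^k}$; the claim to prove is that there is $\mu=\mu(\la,\La,d)\in(0,1)$ with $|B_{k+1}|\leq (1-\mu)|B_k|$ for every $k$. Granting this, iteration gives $|B_k|\leq (1-\mu)^k|Q_1|$, and taking $k\sim\log\ka/\log K$ together with $\al:=|\log(1-\mu)|/\log K$ yields the stated bound. The decay estimate itself would be proved by a parabolic Calder\'on--Zygmund cube decomposition: one subdivides $Q_{1/2}(0,-1/4)$ into the parabolic dyadic subcubes of the grid, and for each subcube $Q'$ one shows that, after the affine renormalization that subtracts the best parabolic quadratic $P$ of $u$ at the previous scale and rescales $Q'$ to a unit cube, the error $u-P$ is an $O(1)$ solution of a uniformly parabolic equation.

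Interior regularity for $u-P$ then drives the improvement: the parabolic $C^{1,\bar\al}$ estimate for general uniformly parabolic operators, upgraded to $C^{2,\bar\al}$ on the part of $Q'$ on which the frozen-coefficient operator $\bar F:=F(\cdot,x_{Q'},t_{Q'})$ is a good approximation (Evans--Krylov-type bounds together with Schauder), forces the second-order Taylor polynomial of $u-P$ to provide an \emph{improved} quadratic fit of opening comparable to the previous one on a definite fraction of $Q'$; the points of $Q'$ where this fails are confined to a controlled proportion by the parabolic $W^{2,\ve}$-estimate applied to $u-P$, and that proportion feeds directly into $\mu$. The three ingredients underlying this step are the interior $C^{2,\bar\al}$ estimate for constant-coefficient parabolic equations, the parabolic Alexandrov--Backelman--Pucci--Krylov--Tso estimate, and the doubling property of the metric $d[\cdot,\cdot]$; that $\bar F$ approximates $F$ well on a small cylinder around most points is where the equicontinuity of $F$ in $(x,t)$---and, for the sharp form of the estimate, the H\"older exponent $\sig>\tfrac12$ of \pref{f3'}---enters, via a compactness/approximation lemma. (One can also motivate the result by the heuristic of differentiating the equation: each directional difference quotient of $u$ solves the parabolic Pucci extremal inequalities and hence, by the parabolic $W^{2,\ve}$-estimate, is ``$C^{1,1}$ at a point'' off a set of measure $\lesssim M^{-\ve}$; but since $F$ is not assumed concave this heuristic does not close on its own, which is why the full iteration above is needed.)

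The main obstacle is the per-scale improvement together with the bookkeeping of openings: one must arrange that the opening of the improved quadratic fit degrades only by a fixed geometric factor $K$ per scale---so that $K^k\sim\ka$ and $B_k$ decays like a genuine power $\ka^{-\al}$ rather than merely a logarithmic rate---while simultaneously respecting the anisotropic parabolic error $|x-y|^3+|s-t|^{3/2}$ in the definition of $\Sigma_\ka$. The latter requires transferring the spatial control of $D^3 u$ coming from the interior estimates to the time variable through the equation $u_t=-F(D^2u,x,t)$ and the modulus of $F$ in $t$; here it is essential that the allowed error in time is $|s-t|^{3/2}$ rather than $|s-t|^2$, which is exactly what makes the weaker (H\"older, not Lipschitz) time-regularity of $u_t$ sufficient, and that the mixed term is absorbed by Young's inequality $|s-t|\,|x-y|\leq\tfrac23|s-t|^{3/2}+\tfrac13|x-y|^3$. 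A secondary but genuine technical point is that $u$ is only a viscosity solution, so the touching-paraboloid arguments must be carried out through incremental quotients and the stability of viscosity solutions, or else after first upgrading $u$ to $C^{1,\bar\al}_{\mathrm{loc}}$ via the interior parabolic estimate.
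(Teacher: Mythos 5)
This result is cited from \cite{jp_par} and used as a black box in the proof of Proposition \ref{grid}; the paper itself does not prove it, so I can only assess your proposal on its own terms. The iteration you describe has a genuine gap: to produce the \emph{improved} quadratic fit on a definite fraction of each dyadic cube, you invoke ``Evans--Krylov-type bounds together with Schauder'' for the frozen constant-coefficient operator $\bar F$. Evans--Krylov requires $F$ to be concave (or convex) in $D^2 u$, and Theorem \ref{wpar} assumes only uniform parabolicity. Indeed the theorem is interesting precisely in the non-concave regime, where interior $C^{2,\alpha}$ regularity is known to fail in general (Nadirashvili--Vl\u{a}du\c{t}), so there is nothing to appeal to. Without a concavity-free substitute for this step, the decay $|B_{k+1}| \leq (1-\mu)|B_k|$ has no engine: you cannot drive the opening of the quadratic down by a geometric factor per scale, and the Calder\'on--Zygmund argument does not close.

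Your concluding parenthetical dismisses the difference-quotient route---applying the parabolic $W^{2,\varepsilon}$ estimate to directional incremental quotients of $u$---on the grounds that ``since $F$ is not assumed concave this heuristic does not close.'' This has the logic backwards. The parabolic $W^{2,\varepsilon}$ estimate holds for every function in the Pucci class (i.e.\ satisfying $w_t + \mathcal{M}^+(D^2w) \geq -|f|$ and $w_t + \mathcal{M}^-(D^2w) \leq |f|$) with no structural hypothesis on any operator, and incremental quotients of a viscosity solution of $u_t + F(D^2u,x,t) = 0$ lie in exactly this class, with $f$ controlled by the modulus of $F$ in $(x,t)$---again without concavity. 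This is how the elliptic $W^{3,\sigma}$ estimate is obtained in \cite{asellip}, and it is the natural route in the parabolic case as well; the measure bound on $Q_1 \setminus \Sigma_\kappa$ falls out directly, with the matching of the matrix $M$ across directions handled by a short linear-algebra step, and no geometric-decay iteration is needed. The one part of your proposal that is both sound and genuinely parabolic is the bookkeeping of the anisotropic error---transferring spatial third-order control to $|s-t|^{3/2}$ time-error through $u_t = -F(D^2u,x,t)$ and absorbing the mixed term by Young's inequality; that sits on top of the difference-quotient argument rather than the iteration you propose.
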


We note that $\Sigma_{\ka}$ corresponds to the set of points which can be touched monotonically in time by a quadratic expansion with controllable error. Moreover, the points in $\Sigma_{\ka}$ are touched from above and below by polynomials. We are now ready to show the existence of $M^{*}, G^{*}$. For simplicity, we say that a function $\Phi: U_{T}\times U_{T}$ achieves a monotone maximum at $(x_{0}, t_{0}, y_{0}, s_{0})$ if $\Phi(x_{0}, t_{0}, y_{0}, s_{0})\geq \Phi(x, t, y,s)$ for all $x,y\in U$, for all $t\leq t_{0}, s\leq s_{0}$.

\begin{prop}\label{grid}
Let $u, v$ satisfy
\begin{equation*}
\begin{cases}
u_{t}+\overline{F}(D^{2}u)=f(x,t)=v_{t}+F(D^{2}v, x, t)& \text{in}\quad U_{T},\\
u=v=g(x,t) & \text{on}\quad \partial_{p}U_{T},
\end{cases}
\end{equation*} 
so that 
\begin{equation*}
\norm{\overline{F}(0)}_{L^{\infty}(U_{T})}+\sup\norm{F(0, \cdot, \cdot)}_{L^{\infty}(U_{T})}+\norm{g}_{C^{0,1}(\partial_{p}U_{T})}+\norm{f}_{C^{0,1}(U_{T})}\leq R_{0}<+\infty.
\end{equation*}
There exists an exponent $\sigma=\sigma(\la, \La, d)\in (0,1)$ and constants $c=c(\la, \La, d, U_{T})$, $C=C(\la, \La, d, U_{T})$ so that for any $l\leq \eta$, if
\begin{equation}\label{error}
A:=\sup_{U_{T}} (u-v)\geq CR_{0}\eta^{\sigma}>0,
\end{equation}
then there exists $M^{*}\in \mathbb{S}^{d}$, $(y^{*}, s^{*})\in U_{T}$, such that 
\begin{itemize}
\item $|M^{*}|\leq \eta^{\sigma-1}$,
\item $l^{-1}M^{*}, \eta^{-1}y^{*}, \eta^{-2}s^{*}$ have integer entries,
\item $\mu((y^{*}, s^{*})+\eta G_{0}, \overline{F}(M^{*}), M^{*})\geq cA^{d+1}$.
\end{itemize} 
where $\eta G_{0}=\left(-\frac{\eta}{2}, \frac{\eta}{2}\right]^{d}\times \left(-\eta^{2}, 0\right]$.
\end{prop}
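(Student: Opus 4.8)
The plan is to adapt the corresponding argument of \cite{asellip}, combining a quantitative doubling of variables (Proposition \ref{qviscosity}) with the $W^{3,\al}$ estimate for the homogenized equation (Theorem \ref{wpar}). Write $A:=\sup_{U_{T}}(u-v)$ and suppose $A\geq CR_{0}\eta^{\sigma}$ with $\sigma=\sigma(\la,\La,d)\in(0,1)$ small and $C$ large, both fixed at the end. The first step is to localize the defect away from the parabolic boundary: by the uniform exterior cone condition and the global barriers of \cite{parbar}, both $u$ and $v$ stay within $CR_{0}d^{\sigma_{0}}$ of $g$ at parabolic distance $d$ from $\partial_{p}U_{T}$, for some $\sigma_{0}=\sigma_{0}(\la,\La,d)>0$; hence, taking $\sigma\leq\sigma_{0}$, the set $\{u-v>A/2\}$ lies at parabolic distance $\geq\eta$ from $\partial_{p}U_{T}$, so that any parabolic cube of side $\eta$ centered there is contained in $U_{T}$, where $v$ solves its equation.

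I would then set $\delta:=\eta^{2}/A$; since $A\geq CR_{0}\eta^{2}$ this gives $R_{0}+\delta^{-1}\leq CA\eta^{-2}$, and the penalization $\tfrac{1}{2\delta}(|x-y|^{2}+(t-s)^{2})$ varies by $O(A)$ over an $\eta$-cube. For shift parameters $(p,q)$ ranging over a ball $B_{cA}\subseteq\RR^{d}\times\RR^{d}$ (together with cutoff times), consider the monotone maxima of
\begin{equation*}
\Phi_{p,q}(x,t,y,s):=u(x,t)-v(y,s)-\tfrac{1}{2\delta}\big(|x-y|^{2}+(t-s)^{2}\big)-p\cdot x-q\cdot y .
\end{equation*}
For $c$ small the maximum still exceeds $A/2>0$, and by the previous step it is attained at an interior point $(x_{1},t_{1},y_{1},s_{1})$ with $|x_{1}-y_{1}|+|t_{1}-s_{1}|^{1/2}=O(\delta^{1/2})$, at which $u$ is touched from above and $v$ from below, monotonically in time, by paraboloids of opening $\delta^{-1}$. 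Next I would feed in regularity of $u$: as $u_{t}+\overline{F}(D^{2}u)=f$ with $\overline{F}$ uniformly parabolic, Theorem \ref{wpar} (with $\ka\asymp\eta^{\sigma-1}$, absorbing $f$) shows the touch-points $(x_{1},t_{1})$ outside the good set $\Sigma_{\ka}$ fill a set of measure $\leq CR_{0}^{\al}\eta^{(1-\sigma)\al}$, which for $\sigma$ small (in terms of $\al=\al(\la,\La,d)$ and $d$) is negligible against the lower bound on the measure of physical touch-points forced by Proposition \ref{qviscosity} (via its Lipschitz estimate \pref{subgoal}, run so that the $\gtrsim A^{2d+2}$ spread of shift parameters cannot collapse below $cA^{2d+2}(R_{0}+\delta^{-1})^{-(2d+2)}\gtrsim\eta^{2(2d+2)}$ in physical space). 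Consequently a positive-measure family of parameters has its $u$-touch-point in $\Sigma_{\ka}$; at such a point $u$ lies between two paraboloids with a common Hessian $M^{*}\in\mathbb{S}^{d}$, $|M^{*}|\leq\ka\leq\eta^{\sigma-1}$, common time-slope $b^{*}=f(x_{1},t_{1})-\overline{F}(M^{*})$, and cubic error. Rounding $M^{*}$ to the lattice $l\ZZ$ and $(y_{1},s_{1})$ to the $\eta\times\eta^{2}$ lattice moves $\overline{F}(M^{*})$ by $O(R_{0}\eta)$ and the paraboloid by $O(R_{0}\eta^{3})$ on the $\eta$-cube, and there are only finitely many rounded matrices with $|M^{*}|\leq\eta^{\sigma-1}$, which accounts for the countable family.

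To conclude I would pigeonhole over the $\eta\times\eta^{2}$ grid: a positive fraction of the admissible parameters share a common cell $G^{*}=(y^{*},s^{*})+\eta G_{0}$, with $(y^{*},s^{*})$ on the lattice and $l^{-1}M^{*},\eta^{-1}y^{*},\eta^{-2}s^{*}$ integral, containing their $v$-touch-points. On $G^{*}$ set $w:=v-\tfrac12(x-y^{*})\cdot M^{*}(x-y^{*})-b^{*}(t-s^{*})-(\text{affine in }x)-(\text{harmless concave-in-}x\text{ term of size }O(R_{0}\eta))$, the affine and concave parts chosen to pin $w$ below the relevant lower paraboloid and to absorb $|f-f(x_{1},t_{1})|\leq CR_{0}\eta$ and the rounding. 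Then $M^{*}+D^{2}w=D^{2}v-(\text{small})I$ and $w_{t}=v_{t}-b^{*}$, so, using uniform ellipticity of $F$ and $v_{t}+F(D^{2}v,x,t,\om)=f$,
\begin{equation*}
w_{t}+F(M^{*}+D^{2}w,x,t,\om)\;\geq\;f-b^{*}+cR_{0}\eta\;\geq\;\overline{F}(M^{*})\qquad\text{in }G^{*},
\end{equation*}
that is, $w\in S(G^{*},\om,\overline{F}(M^{*}),M^{*})$. Finally, the doubling geometry — this is where Proposition \ref{qviscosity} is essential, converting the $\gtrsim A^{d+1}$-measure spread of the slopes of the touching planes of $w$ into the statement that $\Ga^{w}$ drops by at least $cA\eta^{2}$ from $\partial_{p}G^{*}$ to its interior infimum, with no smoothness of $v$ needed — combined with Lemma \ref{muabp} rescaled to cube-size $\eta$, yields
\begin{equation*}
\mu(G^{*},\om,\overline{F}(M^{*}),M^{*})\;\geq\;\Big(\tfrac{cA\eta^{2}}{\c[ptf]\,\eta^{2}}\Big)^{d+1}\;=\;cA^{d+1},
\end{equation*}
which is the claim (with $\eta^{\sigma-1}$ as the size bound on $M^{*}$).

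I expect the main obstacle to be this final localization/counting step: one must tune the three scales — the doubling parameter $\delta\asymp\eta^{2}/A$, the regularity scale $\ka\asymp\eta^{\sigma-1}$, and the lattice size $\eta$ — simultaneously so that (i) the $W^{3,\al}$ bad set is negligible, (ii) the physical touch-points localize into $O(1)$ grid cells, and (iii) $|M^{*}|\leq\eta^{\sigma-1}$, which pins $\sigma$ to a definite small exponent depending only on $\la,\La,d$ through $\al$; and one must extract the monotone gap of size $cA\eta^{2}$ for $\Ga^{w}$ using only the opening-$\delta^{-1}$ barriers and the measure control of Proposition \ref{qviscosity}. The remaining pieces (the boundary-layer localization, the algebra verifying the supersolution inequality, and the rescaling of Lemma \ref{muabp}) are routine.
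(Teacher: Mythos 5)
Your overall plan (boundary localization, doubling variables plus Proposition \ref{qviscosity}, the $W^{3,\alpha}$ estimate, construct a supersolution on a grid cube, conclude via Lemma \ref{muabp}) matches the paper's, but there are two places where the proposal, as written, would not close.

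First, the choice $\delta = \eta^2/A$ does not cooperate with the measure estimates. In Proposition \ref{qviscosity} the loss factor is $(R_0 + \delta^{-1})^{2d+2}$; with your $\delta$ this is of order $(A/\eta^2)^{2d+2}$, so the physical touch-set satisfies only $|\pi(V)| \gtrsim A^2 \eta^{4d+4}$. On the other hand, taking $\kappa \asymp \eta^{\sigma-1}$ in Theorem \ref{wpar} gives a bad set of measure $\lesssim \eta^{(1-\sigma)\alpha}$ with $\alpha \leq 1$. Since $A\gtrsim \eta^\sigma$, the requirement $|\pi(V)| > |U_T\setminus\Sigma_\kappa|$ becomes $\eta^{4d+4+2\sigma} > \eta^{(1-\sigma)\alpha}$, i.e.\ $4d+4+2\sigma < (1-\sigma)\alpha <1$, which is impossible. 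The paper instead chooses $\delta$ as a power of $A$ alone, $\delta \leq \tfrac12 A^{(4-\alpha)/\alpha}$, so that $\delta^{-1}$ is a (negative) power of $A$, $|\pi(V)| \gtrsim A^{(8d+8)/\alpha}$, and the comparison with $\kappa^{-\alpha}$ is then a comparison between powers of $\eta^\sigma$ — closable by taking $\sigma$ small enough (the paper's $\sigma = ((1+4(d+2))/\alpha^2)^{-1}$). In short, $\delta$ must scale with $A$, not with $\eta$.

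Second, the final drop estimate should not be credited to Proposition \ref{qviscosity}. That proposition is used once, earlier, to show that the projection $\pi(V)$ of the touching points has enough measure to intersect $\Sigma_\kappa$. It makes no statement about the parabolic subdifferential of a single function $w$ on a single cube $G^*$, nor about ``spreads of slopes of touching planes.'' In the paper, once one good point $(x_1,t_1)\in \Sigma_\kappa\cap\pi(V)$ is fixed, the drop comes from an algebraic/pointwise argument: the monotone maximum of $\Phi$ at $(x_1,t_1,y_1,s_1)$, substituted into the quadratic expansion of $u$ at $(x_1,t_1)$, yields $\Theta(y_1,s_1)\leq \inf_{s\leq s_1}\bigl(\Theta - C\eta^\sigma |y-y_1|^2\bigr)$ for the explicitly constructed function $\Theta$, and this (together with the $cA$ ``room'' built in by replacing $v$ with the solution of the equation with right-hand side $f+cA$, and the supersolution inequality $\Theta_s + F(M^*+D^2\Theta,\cdot,\cdot)\geq \overline F(M^*)$ on $G^*$) feeds into Lemma \ref{muabp}. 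There is also no need for the pigeonhole over grid cells: the argument uses only the one good point; rounding $M$ and $(y_1,s_1)$ to the lattice is a local perturbation absorbed by the $cA$ margin, not a counting step.
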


\begin{proof}
As usual, $c, C$ will denote constants which depend on universal quantities, which will vary line by line. We first point out some simplifications which we take without loss of generality. We assume that $R_{0}=1$, and $U_{T}\subseteq Q_{1}(0,1)$, and appropriately renormalize.

Next, we claim that we may replace $v$ by $\tilde{v}$ solving 
\begin{equation}\label{lilroom}
\begin{cases}
\tilde{v}_{t}+F(D^{2}\tilde{v}, x, t)=f(x,t)+cA & \text{in}\quad U_{T},\\
\tilde{v}=v & \text{on}\quad \partial_{p}U_{T}.
\end{cases}
\end{equation}
The Alexandrov-Backelman-Pucci-Krylov-Tso estimate \cite{wangreg1, cyriluis} yields that 
\begin{equation*}
\tilde{v}-v\leq CA\quad\text{in}\quad U_{T},
\end{equation*}
so by adjusting the constant in \pref{error}, we may take the replacement at no cost. 

Finally, we point out that by the Krylov-Safonov estimates \cite{wangreg1, cyriluis}, $u, v$ are Holder continuous, and since $R_{0}\leq 1$, there exists $\al(\la, \La, d)\in (0,1)$ such that 
\begin{equation}
\norm{u}_{C^{0, \al}(\overline{U_{T}})}+\norm{v}_{C^{0, \al}(\overline{U_{T}})}\leq C.
\end{equation}
Without loss of generality, assume that $\al\leq \frac{1}{2}$. Since $u=v$ on $\partial_{p}U_{T}$, this implies that for all $(x,t), (y,s)\in U_{T}$, 
\begin{equation*}
|u(x,t)-v(y,s)|\leq C\left(d[(x,t), \partial_{p}U_{T}]^{\al}+d[(y,s), \partial_{p}U_{T}]^{\al}+d[(x,t), (y,s)]^{\al}\right).
\end{equation*}
Consider the function 
\begin{equation*}
\Phi(x,t,y,s,p,q)=u(x,t)-v(y,s)-\frac{1}{2\delta}\left[|x-y|^{2}+(t-s)^{2})\right]-p\cdot x-q\cdot y.
\end{equation*}

Suppose there exists a point $(x_{0}, t_{0})$ such that $u(x_{0}, t_{0})-v(x_{0}, t_{0})\geq \frac{3}{4}A$. This implies that 
\begin{equation*}
\Phi(x_{0}, t_{0}, x_{0}, t_{0}, 0, 0)\geq \frac{3}{4}A. 
\end{equation*}

Let 
\begin{equation*}
U_{T}(\rho):=\left\{(x,t)\in U_{T}\times U_{T}: d[(x,t), \partial_{p}U_{T}]\geq \rho\right\}.
\end{equation*}
Let $p,q\in B_{r}$, where we define $r:=\frac{1}{8}A$. We would like to show that $\Phi(\cdot, \cdot, \cdot, \cdot, p,q)$ achieves it monotone maximum in $U_{T}(\rho)\times U_{T}(\rho)$ for some choice of $\rho$.

We note that 
\begin{align*}
\Phi(x,t,y,s,p,q)&=u(x,t)-v(y,s)-\frac{1}{2\delta}\left[|x-y|^{2}+(t-s)^{2}\right]-p\cdot x-q\cdot y\\
&\leq C\left(d[(x,t), \partial_{p}U_{T})]^{\al}+d[(y,s), \partial_{p}U_{T})]^{\al}+d[(x,t), (y,s)]^{\al}\right)\\
&-\frac{1}{2\delta}\left[|x-y|^{2}+(t-s)^{2}\right]+2r.
\end{align*}
By Young's inequality, 
\begin{equation*}
|x-y|^{\al}=A^{(2-\al)/2}[A^{-(2-\al)/\al}|x-y|^{2}]^{\al/2}\leq \frac{1}{8C}A+CA^{-(2-\al)/\al}|x-y|^{2},
\end{equation*}
and 
\begin{equation*}
|t-s|^{\al/2}=A^{(4-\al)/4}\left[A^{-(4-\al)/\al}|t-s|^{2}\right]^{\al/4}\leq \frac{1}{8C}A+CA^{-(4-\al)/\al}(t-s)^{2}.
\end{equation*}
Assume $A\leq 1$. This implies that $A^{-(2-\al)/\al}\leq A^{-(4-\al)/\al}$. 
Therefore, 
\begin{align*}
\Phi(x,y,t,s, p,q)&\leq Cd[(x,t), \partial_{p}U_{T}]^{\al}+Cd[(y,s), \partial_{p}U_{T}]^{\al}+\frac{1}{4}A\\
&+\frac{1}{4}A+C\left(A^{-(4-\al)/\al}-\frac{1}{2\delta}\right)\left[|x-y|^{2}+(t-s)^{2}\right].
\end{align*}

By letting 
\begin{equation}\label{deltachoice}
\delta\leq \frac{1}{2}A^{(4-\al)/\al},
\end{equation}
we have that 
\begin{equation*}
\Phi(x,y,t,s,p,q)\leq Cd[(x,t), \partial_{p}U_{T}]^{\al}+C[d(y,s), \partial_{p}U_{T}]^{\al}+\frac{1}{2}A.\\
\end{equation*}
Therefore, letting $\rho:= CA^{1/\al}$ yields that for any $p,q\in B_{r}$, $\Phi$ achieves its monotone maximum in $U_{T}(\rho)\times U_{T}(\rho)$. 

Using the language of Proposition \ref{qviscosity}, we let $W\subseteq\RR^{d+1}$ such that 
$Q_{r}\times Q_{r}\subseteq W$. This yields that 
\begin{align*}
V:=&\left\{(x,t,y,s)\in U_{T}\times U_{T}: \exists (p,q)\in B_{r}\times B_{r}: \Phi(\cdot, \cdot, \cdot, \cdot, p,q)\right.\\
&\text{achieves its monotone maximum at}\, (x,t,y,s), \\
&\text{for appropriate}\, \left.(h,k)\in \RR^{2}\right\}\subseteq U_{T}(\rho)\times U_{T}(\rho).
\end{align*}

By Proposition \ref{qviscosity}, this implies that 
\begin{align*}
|V|\geq C(1+\delta^{-1})^{-2d-2}r^{2d+2}&\geq C(1+A^{-(4-\al)/\al})^{-2d-2}A^{2d+2}\\
&\geq CA^{(8d+8)/\al}.
\end{align*}

If we denote the projection $\pi: \RR^{d+1}\times \RR^{d+1}\rightarrow \RR^{d+1}$ by $\pi((A,B))=A$, we have that 
\begin{equation}\label{touch_est}
\pi(V)\geq |U_{T}|^{-1}|V|\geq |Q_{1}|^{-1}|V|\geq CA^{(8d+8)/\al}.
\end{equation}

Finally, we note that for every $((x,t),(y,s))\in V$, since $\Phi(x,t,y,s,p,q)\geq 0$ for some $p,q\in B_{r}\subseteq B_{1}$, $\al\leq \frac{1}{2}$, and $A\leq 1$, this implies that 
\begin{equation}\label{4something}
|x-y|^{2}+|t-s|^{2}\leq C\delta\leq CA^{(4-\al)/\al}\leq CA^{6}.
\end{equation}

Next, we use \pref{touch_est} to show that there are points in $\pi(V)$, where $u$ can be approximated by a quadratic expansion. Let $\Sigma_{\ka}$ as in the $W^{3, \al}$ estimate (Theorem \ref{wpar}). 

By the $W^{3, \al}$ estimate, assuming that $U_{T}\subseteq Q_{1}$, 
\begin{equation}\label{west}
|U_{T}\setminus \Sigma_{\ka}(U_{T})|\leq |Q_{1}\setminus \Sigma_{\ka}(U_{T})\cap Q_{1/2}(0, -1/4)|\leq  C\ka^{-\al}.
\end{equation}
Although a priori, the two $\al$'s in \pref{touch_est} and \pref{west} are not necessarily the same, we can assume without loss of generality they are the same by taking the minimum of the two. 

Thus, if we let $\ka\geq CA^{-4(d+2)/\al^{2}}$, then 
\begin{equation*}
|U_{T}\setminus \Sigma_{\ka}(U_{T})|< |\pi(V)|,
\end{equation*}
which implies that $\pi(V)\cap \Sigma_{\ka}\neq \emptyset$. This implies that there are points  of $\pi(V)$ where $u$ can be touched monotonically in time by a quadratic expansion with controllable error, and the function $\Phi$ achieves it monotone maximum there. 

Finally, we show that there exists $M^{*}, y^{*}, s^{*}, G^{*}$ which satisfy the conclusion of the proposition. By the previous step, there exists $(x_{1}, t_{1}, y_{1}, s_{1})\in V$ with $(x_{1}, t_{1})\in \Sigma_{\ka}$. In other words, there exists $p,q\in B_{r}$, such that 
\begin{equation*}
\Phi(x_{1}, t_{1}, y_{1}, s_{1}, p, q)=\sup_{U_{T}(\rho)_\times U_{T}(\rho), \tau\leq t_{1}, \sigma\leq s_{1}} \Phi(x, \tau, y, \sigma, p,q),
\end{equation*}
and $(M, \xi, b)$ so that $|M|\leq \ka$, and for all $(x,t)\in U_{T}$, $t\leq t_{1}$,
\begin{align*}
\left| u(x,t)-u(x_{1}, t_{1})-b(t-t_{1})-\xi\cdot (x-x_{1})\right.&\left.-\frac{1}{2}(x-x_{1})\cdot M(x-x_{1})\right|\\
&\leq \frac{1}{6}\ka\left(|x-x_{1}|^{3}+|t-t_{1}|^{3/2}\right).
\end{align*}
Notice that since $u_{t}+\overline{F}(D^{2}u)=f(x,t)$ in $U_{T}$, and $u$ is touched from above and below at $(x_{1}, t_{1})$ by polynomials with Hessians equal to $M$, this implies that $b+\overline{F}(M)=f(x_{1}, t_{1})$. Therefore, defining
\begin{align*}
\phi(x,t):=&u(x_{1}, t_{1})+b(t-t_{1})+(\xi-p)\cdot (x-x_{1})+\frac{1}{2}(x-x_{1})\cdot M(x-x_{1})\\
&-\frac{1}{6}\ka \left(|x-x_{1}|^{3}+|t-t_{1}|^{3/2}\right),
\end{align*}
we have 
\begin{align}\label{omg1}
&u(x_{1}, t_{1})-v(y_{1}, s_{1})-\frac{1}{2\delta}\left[|x_{1}-y_{1}|^{2}+(t_{1}-s_{1})^{2}\right]\\
&\geq \sup_{U_{T}\times U_{T}, t\leq t_{1}, s\leq s_{1}} \left\{\phi(x,t)-v(y,s)-\frac{1}{2\delta}\left[|x-y|^{2}+(t-s)^{2}\right]-q\cdot (y-y_{1})\right\}.\notag
\end{align}

To control the right hand side from below, we consider that for any $(y,s)\in U_{T}$, with $s\leq s_{1}$, letting $x=x_{1}+y-y_{1}$, $t=t_{1}+s-s_{1}\leq t_{1}$, 
\begin{align}\label{omg2}
\sup_{(x,t)\in U_{T}, t\leq t_{1}} &\left\{\phi(x,t)-\frac{1}{2\delta}\left[|x-y|^{2}+(t-s)^{2}\right]\right\}\\
&\geq \phi(x_{1}+y-y_{1}, t_{1}+s-s_{1})-\frac{1}{2\delta}\left[|x_{1}-y_{1}|^{2}+(t_{1}-s_{1})^{2}\right]\notag\\
&=u(x_{1}, t_{1})+b(s-s_{1})+(\xi-p)\cdot (y-y_{1})+\frac{1}{2}(y-y_{1})\cdot M(y-y_{1})\notag\\
&-\frac{1}{6}\ka \left(|y-y_{1}|^{3}+|s-s_{1}|^{3/2}\right)-\frac{1}{2\delta}\left[|x_{1}-y_{1}|^{2}+(t_{1}-s_{1})^{2}\right].\notag
\end{align}

Combining \pref{omg1} and \pref{omg2} yields that 
\begin{align*}\label{omg3}
&u(x_{1}, t_{1})-v(y_{1}, s_{1})-\frac{1}{2\delta}\left[|x_{1}-y_{1}|^{2}+(t_{1}-s_{1})^{2}\right]\\
&\geq \sup_{(y,s)\in U_{T}, s\leq s_{1}}\left\{u(x_{1}, t_{1})+b(s-s_{1})+(\xi-p)\cdot (y-y_{1})+\frac{1}{2}(y-y_{1})\cdot M(y-y_{1})\right.\notag\\
&\left.-\frac{1}{6}\ka \left(|y-y_{1}|^{3}+|s-s_{1}|^{3/2}\right)-\frac{1}{2\delta}\left[|x_{1}-y_{1}|^{2}+(t_{1}-s_{1})^{2}\right]-v(y,s)\right.\\
&\left.-q\cdot (y-y_{1})\right\}.\notag
\end{align*}
This implies that 
\begin{align}
v(y_{1}, s_{1})&\leq \inf_{(y,s)\in U_{T}, s\leq s_{1}}\left\{v(y,s)-b(s-s_{1})-(\xi-p-q)\cdot (y-y_{1})\right.\\
&\left.-\frac{1}{2}(y-y_{1})\cdot M(y-y_{1})+\frac{1}{6}\ka \left(|y-y_{1}|^{3}+|s-s_{1}|^{3/2}\right)\right\}.\notag
\end{align}

Since $l\leq \eta$, let $M^{*}\in \mathbb{S}^{d}$ so that $M\leq M^{*}\leq M+C\eta^{\sigma}Id$, and $l^{-1}M^{*}$ has integer entries. Using that $\overline{F}$ is uniformly elliptic, $\overline{F}(M^{*})\leq \overline{F}(M)=f(x_{1}, t_{1})-b$. Let 
\begin{align*}
\Theta(y,s):=&v(y,s)-b(s-s_{1})-(\xi-p-q)\cdot (y-y_{1})\\
&-\frac{1}{2}(y-y_{1})\cdot (M-C\eta^{\sigma}Id)(y-y_{1})+\frac{1}{6}\ka \left(|y-y_{1}|^{3}+|s-s_{1}|^{3/2}\right).
\end{align*}

By \pref{lilroom},
\begin{align*}
\Theta_{s}&+F(M^{*}+D^{2}\Theta, y, s)= v_{s}-b+\frac{1}{4}\ka|s-s_{1}|^{1/2}\\
&+F\left(M^{*}+D^{2}v-M+C\eta^{\sigma}Id+\frac{1}{2}\ka |y-y_{1}|Id+\frac{1}{2}\ka \frac{(y-y_{1})\otimes (y-y_{1})}{|y-y_{1}|}, y, s\right)\\
&\geq v_{s}-b+F(D^{2}v, y, s)-C\left(M^{*}-M+C\eta^{\sigma}Id+C\frac{1}{2}\ka |y-y_{1}|Id\right)\\
&\geq f(y,s)+cA-b-C\eta^{\sigma}-C\frac{1}{2}{\ka}|y-y_{1}|\\
&\geq f(y,s)+cA-b-C\eta^{\sigma}-C\frac{1}{2}(\ka+1)|y-y_{1}|\\
&\geq \overline{F}(M)-CA^{6}+cA-C\eta^{\sigma}-C\frac{1}{2}(\ka+1)|y-y_{1}|,
\end{align*}
where the last line holds by \pref{4something}, and using that $\overline{F}(M)=f(x_{1}, t_{1})-b$. 

This implies that in $Q_{cA(\ka+1)^{-1}}(y_{1}, s_{1})$, 
\begin{equation*}
\Theta_{s}+F(M^{*}+D^{2}\Theta, y, s)\geq \overline{F}(M)-CA^{6}+cA-C\eta^{\sigma}.
\end{equation*}

In addition, comparing \pref{omg3} and the definition of $\Theta$, 
\begin{equation}\label{useful}
\Theta(y_{1}, s_{1})\leq \inf_{(y,s)\in U_{T}, s\leq s_{1}} \left(\Theta-C\eta^{\sigma}|y-y_{1}|^{2}\right).
\end{equation}
Let $(y^{*}, s^{*})$ so that $(\eta^{-1}y^{*}, \eta^{-2}s^{*})\in \ZZ^{d+1}$, and $d[(y^{*}, s^{*}), (y_{1}, s_{1})]\leq \sqrt{d}\eta$. 

Let 
\begin{equation*}
G^{*}:=(y^{*}, s^{*})-\eta G_{0}.
\end{equation*}
Since $(y_{1}, s_{1})\in U_{T}(\rho)$, $d[(y^{*}, s^{*}), \partial_{p}U_{T}]\geq \rho-\sqrt{d}\eta\geq \sqrt{d}\eta$ so long as $\rho:=CA^{1/\al}\geq C\eta$ (which is satisfied if $\sigma\leq \al$). This implies that $G^{*}\subseteq U_{T}$. 

We next claim that $G^{*}\subseteq Q_{cA(\ka+1)^{-1}}(y_{1}, s_{1})$ for an appropriate choice of $\ka$. Let $\ka:=\eta^{\sigma-1}$ with $\sigma:=((1+4(d+2))/\al^{2})^{-1}\leq \al$. Since $A\geq C\eta^{\sigma}$, we may choose the constants so that $cA(\ka+1)^{-1}\geq \sqrt{d}\eta$.  This yields that $G^{*}\subseteq Q_{cA(\ka+1)^{-1}}(y_{1}, s_{1})$, as asserted. 

Therefore, 
\begin{equation}
\Theta_{s}+F(M^{*}+D^{2}\Theta, y, s)\geq \overline{F}(M^{*})\quad\text{in}\quad G^{*}.
\end{equation}

By \pref{useful}, we conclude that 
\begin{equation}\label{useful2}
\inf_{G^{*}} \Theta\leq \inf_{\partial_{p}G^{*}}\Theta -C\eta^{\sigma}.
\end{equation}
This implies by Lemma \ref{muabp} and \pref{useful2}, that 
\begin{equation*}
\mu(G^{*}, \overline{F}(M^{*}), M^{*})\geq cA^{d+1},
\end{equation*}
and this completes the proof.
\end{proof}

Finally, we are ready to prove Theorem \ref{mainthm}.
\begin{proof}[Proof of Theorem \ref{mainthm}]
{
We prove a rate in probability for the decay of $u-u^{\ve}$. Fix $M_{0}$ and $U_{T}$ so that $U_{T}\subset Q_{1}$, and 
\begin{equation*}
\left(1+K_{0}+\norm{g}_{C^{0,1}(\partial_{p}U_{T})}\right)\leq M_{0}. 
\end{equation*}
We will show that there exists a $\beta>0$ and a random variable $\mathcal{X}:\Om\rightarrow \RR$ such that 
\begin{equation*}
\sup_{U_{T}} \left\{u(x,t)-u^{\ve}(x,t,\om)\right\}\leq C\left[1+\ve^{p}\mathcal{X}(\om)\right]\ve^{\beta}.
\end{equation*}}
{
We mention that a rate on $u^{\ve}-u$ follows by a completely analogous argument for $\mu^{*}$, so we choose to omit it. }

{
Fix $\ve\in (0,1)$, fix $p<d+2$, and let $\sigma$ as in Proposition \ref{grid}. Let $\al$ be the $\al$ associated with $p$ as in Corollary \ref{cormup}, and let $q:=\frac{p}{4}$. Choose $m$ to so that 
\begin{equation}\label{mdef}
\max\left\{3^{-m/4}, 3^{-m\al/(d+1)}\right\}\leq \ve.
\end{equation}}

{
In the language of Proposition \ref{grid}, let $\eta:=3^{-m\al/2(d+1)}$, and choose $l:=3^{-m\al/2d}$. Notice that we have that $l\leq \eta\leq \ve^{1/2}$. This implies that for any $A\geq C\eta^{\sigma}$,
\begin{align*}
&\left\{\om: \sup_{(x,t)\in U_{T}} u(x,t)-u^{\ve}(x, t, \om)\geq A\right\}\\
&\subseteq \bigcup_{(y, s, M)\in \mathcal{I}(A)}\left\{\om: \mu\left((y/\ve, s/\ve^{2})+\eta \ve^{-1}G_{0}, \om, \overline{F}(M), M\right)\geq cA^{d+1}\right\}\\
&= \bigcup_{(y, s, M)\in \mathcal{I}(A)}\left\{\om: \mu\left((y/\ve, s/\ve^{2})+G_{m},\om, \overline{F}(M), M\right)\geq cA^{d+1}\right\}
\end{align*}
where 
\begin{equation*}
\mathcal{I}(A):=\left\{(y, s, M): (y,s)\in Q_{1}, (\eta^{-1}y, \eta^{-2}s)\in \ZZ^{d+1}, |M|\leq 3^{m\al/2(d+1)}\right\}.
\end{equation*}
This is possible since $\eta<1$ and  Proposition \ref{grid} yields that $\sigma<1$, which implies that  $|M|\leq \eta^{\sigma-1}\leq \eta^{-1}\leq 3^{m\al/2(d+1)}$. We mention also that $l^{-1}M\in \mathbb{Z}^{d^{2}}\cap \mathbb{S}^{d}$. }

This implies that 
{
\begin{equation}
\sup_{(x,t)\in U_{T}} \left\{u(x,t)-u^{\ve}(x,t, \om)\right\}\leq cA^{d+1}+\mathcal{Y}_{m}(\om)
\end{equation}
where 
\begin{equation}
\mathcal{Y}_{m}(\om):=\left\{\sup \mu((z, r)+G_{m}, \om, \overline{F}(M), M): (z\ve^{-1},r\ve^{-2}, M)\in \mathcal{I}(A)\right\}. 
\end{equation}}
{
To find the number of elements in $\mathcal{I}(A)$, consider that since $\eta^{-1}z\in \ZZ^{d}\cap Q_{1/\ve}$ and $\eta^{-2}s\in \ZZ\cap [0, 1/\ve^{2}]$, there are $(\ve \eta)^{-(d+2)}$ choices for $(z,s)$. This implies that there are at most $3^{3m\al}$ choices. For the matrices, consider that since $3^{m\al/2d}M\in \mathbb{Z}^{d^{2}}\cap \mathbb{S}^{d}$ and $|M|\leq 3^{m\al/2(d+1)}$, this implies that there are at most $3^{m\al(d+1)}$ terms. Total, there are $3^{m\al(d+4)}$ combinations to choose from in $\mathcal{I}(A)$.} 

By Corollary \ref{cormup}, for each $(z, r, M)\in \mathcal{I}(A)$, 
{
\begin{equation*}
\PP[(z,r)+\mu(G_{m}, \om, \overline{F}(M), M)\geq (1+|M|)^{d+1}3^{-m\al}\tau]\leq C\exp(-c3^{mp}\tau).
\end{equation*}

Since $|M|^{d+1}\leq 3^{m\al/2}$, this implies that 
\begin{equation*}
\PP[(z,r)+\mu(G_{m}, \om, \overline{F}(M), M)\geq 3^{-m\al/2}\tau]\leq \exp(-c3^{mp}\tau).
\end{equation*}

Using a union bound and summing over all of the terms in $\mathcal{I}(A)$, 
\begin{equation*}
\PP[\mathcal{Y}_{m}(\om)\geq 3^{-m\al/2}\tau]\leq C3^{m\al(d+4)} \exp(-c3^{mp}\tau)\leq C\exp(-c3^{mp}\tau).
\end{equation*}}

Replacing $\tau$ by $\tau+1$, we have that for all $\tau\geq 0$, 
\begin{equation*}
\PP[(3^{m\al/2}\mathcal{Y}_{m}(\om)-1)_{+}\geq \tau]\leq C\exp(-c3^{mp}\tau).
\end{equation*}

Replacing again $\tau\rightarrow 3^{-mq}\tau$ yields that 
\begin{equation*}
\PP[3^{mq}3^{m\al/2}\left(\mathcal{Y}_{m}(\om)-1\right)_{+}\geq \tau]\leq C\exp(-c3^{m(p-q)}\tau).\end{equation*}

Summing over $m$, using that $p>q$, this implies that for all $\tau\geq 0$. 
\begin{align}\label{cdf}
\PP\left[\sup_{m} \left\{3^{mq}3^{m\al/2}\left(\mathcal{Y}_{m}(\om)-1\right)_{+}\right\}\geq \tau\right]&\leq \sum_{m}\PP[3^{mq}3^{m\al/2}\left(\mathcal{Y}_{m}(\om)-1\right)_{+}\geq \tau]\notag\\
&\leq C\exp(-c\tau).
\end{align}
{
Letting 
\begin{equation}
\mathcal{X}(\om):=\sup_{m}\left\{ 3^{mq}\left(3^{m\al/2}\mathcal{Y}_{m}(\om)-1\right)_{+}\right\}
\end{equation}
and integrating \pref{cdf} in $\tau$ yields that 
\begin{equation}\label{xexp}
\EE[\exp(\mathcal{X}(\om))]\leq C.
\end{equation}}

This implies that
{
\begin{align*}
\sup_{(x,t)\in U_{T}} \left\{u(x,t)-u^{\ve}(x,t,\om)\right\}&\leq C\eta^{\sigma(d+1)}+C(3^{-mq}\mathcal{X}(\om)+1)3^{-m\al/2}\\
&\leq C(1+\ve^{p}\mathcal{X}(\om))\ve^{\beta}
\end{align*}}
{
for some choice of $\beta$, where $\beta(\la, \La, d, p)$.}
\end{proof}

\section*{Acknowledgements}
{
Part of this article appeared in first author's doctoral thesis. Both authors would like to thank Scott Armstrong and Takis Souganidis for useful discussions. The first author was partially supported by NSF grants DGE-1144082 and DMS-1147523.  The second author was partially supported by NSF grant DMS-1461988 and the Sloan Foundation.  This collaboration took place at the Mittag-Leffler Institute. 
}

% Format a LaTeX bibliography (if not using BiBTeX)
%\makebibliography

% If you're using BiBTeX, make it single spaced:
\bibliographystyle{amsplain}
\bibliography{paralg}

\def\cprime{$'$} \def\polhk#1{\setbox0=\hbox{#1}{\ooalign{\hidewidth
  \lower1.5ex\hbox{`}\hidewidth\crcr\unhbox0}}}
\providecommand{\bysame}{\leavevmode\hbox to3em{\hrulefill}\thinspace}
\providecommand{\MR}{\relax\ifhmode\unskip\space\fi MR }
% \MRhref is called by the amsart/book/proc definition of \MR.
\providecommand{\MRhref}[2]{%
  \href{http://www.ams.org/mathscinet-getitem?mr=#1}{#2}
}
\providecommand{\href}[2]{#2}
\begin{thebibliography}{10}

\bibitem{akerg}
M.~A. Akcoglu and U.~Krengel, \emph{Ergodic theorems for superadditive
  processes}, C. R. Math. Rep. Acad. Sci. Canada \textbf{2} (1980), no.~4,
  175--179. \MR{587323 (81i:60056)}

\bibitem{scottjp}
S.~N. Armstrong and J.P. Daniel, In Preparation.

\bibitem{scottjc}
S.~N. Armstrong and J-C. Mourrat, \emph{Lipschitz regularity for elliptic
  equations with random coefficients}, 2014, arXiv:1411.3668.

\bibitem{asellip}
S.~N. Armstrong and C.~K. Smart, \emph{Quantitative stochastic homogenization
  of elliptic equations in nondivergence form}, preprint, arXiv:1306.5340.

\bibitem{asdiv}
S.N. Armstrong and C.K. Smart, \emph{Quantitative stochastic homogenization of
  convex integral functionals}, 2014, arXiv:1406.0996v3.

\bibitem{cs}
L.A. Caffarelli and P.E. Souganidis, \emph{Rates of convergence for the
  homogenization of fully nonlinear uniformly elliptic pde in random media},
  Invent. Math. \textbf{180} (2010), no.~2, 301--360. \MR{2609244
  (2011c:35041)}

\bibitem{csw}
L.A. Caffarelli, P.E. Souganidis, and L.~Wang, \emph{Homogenization of fully
  nonlinear, uniformly elliptic and parabolic partial differential equations in
  stationary ergodic media}, Comm. Pure Appl. Math. \textbf{58} (2005), no.~3,
  319--361. \MR{2116617 (2006b:35016)}

\bibitem{crandall}
M.~G. Crandall, \emph{Viscosity solutions: a primer}, Viscosity solutions and
  applications ({M}ontecatini {T}erme, 1995), Lecture Notes in Math., vol.
  1660, Springer, Berlin, 1997, pp.~1--43. \MR{1462699 (98g:35034)}

\bibitem{parbar}
M.~G. Crandall, M.~Kocan, P.~L. Lions, and A.~{\'S}wi{\polhk{e}}ch,
  \emph{Existence results for boundary problems for uniformly elliptic and
  parabolic fully nonlinear equations}, Electron. J. Differential Equations
  (1999), No.24, 22 pp. \MR{1696765 (2000f:35052)}

\bibitem{users}
M.G. Crandall, H.~Ishii, and P.L. Lions, \emph{User's guide to viscosity
  solutions of second order partial differential equations}, Bull. Amer. Math.
  Soc. (N.S.) \textbf{27} (1992), no.~1, 1--67. \MR{1118699 (92j:35050)}

\bibitem{jp_par}
J.P. Daniel, \emph{Quadratic expansions and partial regularity for fully
  nonlinear uniformly parabolic equations}, 2014, To appear in \textit{Calc.
  Var. Partial Differential Equations},\quad arXiv:1309.3781v2.

\bibitem{totiovidiuma}
P.~Daskalopoulos and O.~Savin, \emph{{$C^{1,\alpha}$} regularity of solutions
  to parabolic {M}onge-{A}mp\'ere equations}, Amer. J. Math. \textbf{134}
  (2012), no.~4, 1051--1087. \MR{2956257}

\bibitem{evanshom}
L.C. Evans, \emph{Periodic homogenisation of certain fully nonlinear partial
  differential equations}, Proc. Roy. Soc. Edinburgh Sect. A \textbf{120}
  (1992), no.~3-4, 245--265. \MR{1159184 (93a:35016)}

\bibitem{evansgarbook}
L.C. Evans and R.~F. Gariepy, \emph{Measure theory and fine properties of
  functions}, Studies in Advanced Mathematics, CRC Press, Boca Raton, FL, 1992.
  \MR{1158660 (93f:28001)}

\bibitem{ottoco2}
J.~Fischer and F.~Otto, \emph{A higher-order large-scale regularity theory for
  random elliptic operators}, 2015, arXiv:1503.07578.

\bibitem{ottoco1}
A.~Gloria, S.~Neukamm, and F.~Otto, \emph{A regularity theory for random
  elliptic operators}, 2014, arXiv:1409.2678v2.

\bibitem{guthuang1}
C.~E. Guti{\'e}rrez and Q.~Huang, \emph{A generalization of a theorem by
  {C}alabi to the parabolic {M}onge-{A}mp\`ere equation}, Indiana Univ. Math.
  J. \textbf{47} (1998), no.~4, 1459--1480. \MR{1687122 (2000a:35105)}

\bibitem{guthuang2}
C.E. Guti{\'e}rrez and Q.~Huang, \emph{{$W^{2,p}$} estimates for the parabolic
  {M}onge-{A}mp\`ere equation}, Arch. Ration. Mech. Anal. \textbf{159} (2001),
  no.~2, 137--177. \MR{1857377 (2002h:35118)}

\bibitem{cyriluis}
C.~Imbert and L.~Silvestre, \emph{Lecture notes on fully nonlinear parabolic
  equations}, 2012,
  http://blog-cyrilimbert.net/2012/06/20/lecture-notes-on-fully-nonlinear-parabolic-equations/.

\bibitem{krylovconv}
N.~V. Krylov, \emph{Sequences of convex functions, and estimates of the maximum
  of the solution of a parabolic equation}, Sibirsk. Mat. \v Z. \textbf{17}
  (1976), no.~2, 290--303, 478. \MR{0420016 (54 \#8033)}

\bibitem{evans_krylov}
\bysame, \emph{Boundedly inhomogeneous elliptic and parabolic equations}, Izv.
  Akad. Nauk SSSR Ser. Mat. \textbf{46} (1982), no.~3, 487--523, 670.
  \MR{661144 (84a:35091)}

\bibitem{linhomog}
J.~Lin, \emph{On the stochastic homogenization of fully nonlinear uniformly
  parabolic equations in stationary ergodic spatio-temporal media}, 2014, To
  appear in \textit{Journal of Differential Equations},\quad arXiv:1307.4743.

\bibitem{papvar1}
G.~C. Papanicolaou and S.~R.~S. Varadhan, \emph{Diffusions with random
  coefficients}, Statistics and probability: essays in honor of {C}. {R}.
  {R}ao, North-Holland, Amsterdam, 1982, pp.~547--552. \MR{659505 (85e:60082)}

\bibitem{tso}
K.~Tso, \emph{On an {A}leksandrov-{B}akel\cprime man type maximum principle for
  second-order parabolic equations}, Comm. Partial Differential Equations
  \textbf{10} (1985), no.~5, 543--553. \MR{790223 (87f:35031)}

\bibitem{olga1}
O.~Turanova, \emph{Parabolic $\delta$-solutions and approximation schemes},
  preprint.

\bibitem{wangreg1}
L.~Wang, \emph{On the regularity theory of fully nonlinear parabolic equations.
  {I}}, Comm. Pure Appl. Math. \textbf{45} (1992), no.~1, 27--76. \MR{1135923
  (92m:35126)}

\bibitem{yurestqual}
V.~V. Yurinski{\u\i}, \emph{Averaging of nondivergence random elliptic
  operators}, Limit theorems of probability theory and related questions, Trudy
  Inst. Mat., vol.~1, ``Nauka'' Sibirsk. Otdel., Novosibirsk, 1982,
  pp.~126--138, 207. \MR{669050 (84d:60097)}

\bibitem{yurest3}
\bysame, \emph{On the error of averaging of multidimensional diffusions}, Teor.
  Veroyatnost. i Priminen \textbf{33} (1988), no.~1, 14--24, Eng. transl. in
  \textit{Theory Probab. Appl.} 33(1988), no. 1, 11-21.

\bibitem{yurest2}
\bysame, \emph{Homogenization error estimates for random elliptic operators},
  Mathematics of random media ({B}lacksburg, {VA}, 1989), Lectures in Appl.
  Math., vol.~27, Amer. Math. Soc., Providence, RI, 1991, pp.~285--291.
  \MR{1117252 (92j:60079)}

\end{thebibliography}

% Figures and tables, if you decide to leave them to the end
%\input{figure}
%\input{table}

\end{document}